\newcommand{\R}{\mathbb{R}}
\newcommand{\N}{\mathbb{N}}
\newcommand{\ur}[1]{\mathrm{#1}}
\newcommand{\ure}{\ur{e}}
  \renewcommand{\labelenumi}{(\roman{enumi})}
\newcommand{\eps}{\varepsilon}
\newcommand{\defs}{\coloneqq}
\newcommand{\sfed}{\eqqcolon}
\newcommand{\sea}{\searrow}
\newcommand{\ol}{\overline}
\newcommand{\ds}{\,\mathrm{d}s}
\newcommand{\dt}{\,\mathrm{d}t}
\newcommand{\dtau}{\,\mathrm{d}\tau}
\newcommand{\ddt}{\frac{\mathrm{d}}{\mathrm{d}t}}
\newcommand{\embed}{\hookrightarrow}
\newcommand{\hp}{\hphantom}
\newcommand{\pe}{\mathrel{\hp{=}}}
\newcommand{\tmax}{T_{\max}}
\newcommand{\intom}{\int_\Omega}
\newcommand{\intntom}{\int_0^T \int_\Omega}
\newcommand{\Ombar}{\ol \Omega}
\newcommand{\loc}{\mathrm{loc}}
\newcommand{\leb}[2][\Omega]{\ensuremath{L^{#2}(#1)}}
\newcommand{\sob}[3][\Omega]{\ensuremath{W^{#2, #3}(#1)}}
\newcommand{\con}[2][\Ombar]{\ensuremath{C^{#2}(#1)}}
\newcommand{\f}[2]{\frac{#1}{#2}}
\newcommand{\norm}[2][]{\|#2\|_{#1}}
\newcommand{\nnorm}[2]{\|#2\|_{#1}}
\newcommand{\set}[1]{\left\{#1\right\}}
\newcommand{\Om}{\Omega}
\newcommand{\Lom}[1]{L^{#1}(\Omega)}
\newcommand{\Tmax}{T_{\max}}
\newcommand{\upto}{\nearrow}
\newcommand{\matr}[1]{\begin{pmatrix}#1\end{pmatrix}}
\newcommand{\kl}[1]{\left(#1\right)}
\newcommand{\io}{\int_\Omega}
\newcommand{\tops}{\texorpdfstring}
\renewenvironment{proof}[1][\proofname]{\par
  \pushQED{\qed}%
  \normalfont \topsep0\p@\relax
  \trivlist
  \item[\hskip\labelsep\scshape
  #1\@addpunct{.}]\ignorespaces
}{%
  \popQED\endtrivlist\@endpefalse
}
\newtheorem{base}{Base}[section]
\numberwithin{equation}{section}
\newtheorem{theorem}[base]{Theorem} \newtheorem*{theorem*}{Theorem}
\newtheorem{lemma}[base]{Lemma} \newtheorem*{lemma*}{Lemma}
 \newtheorem*{prop*}{Proposition}
 \newtheorem*{cor*}{Corollary}
\newtheorem{algo}[base]{Algorithm} \newtheorem*{algo*}{Algorithm}
\theoremstyle{definition}
\newtheorem{remark}[base]{Remark} \newtheorem*{remark*}{Remark}
 \newtheorem*{definition*}{Definition}
 \newtheorem*{example*}{Example}
 \newtheorem*{cond*}{Condition}
\begin{document}
\setkomafont{title}{\normalfont\Large}
\title{Global existence of classical solutions and numerical simulations of a cancer invasion model}

\renewcommand\Affilfont{\itshape\footnotesize}
\author[1]{M.\ Fuest\footnote{e-mail:\ fuest@ifam.uni-hannover.de, corresponding author}}
\author[2]{Sh.\ Heydari\footnote{e-mail:\ heydari@karlin.mff.cuni.cz}}
\author[2]{P.\ Knobloch\footnote{e-mail:\ knobloch@karlin.mff.cuni.cz}}
\author[1]{J.\ Lankeit\footnote{e-mail:\ lankeit@ifam.uni-hannover.de}}
\author[1]{T.\ Wick\footnote{e-mail:\ thomas.wick@ifam.uni-hannover.de}}

\affil[1]{Leibniz University Hannover,
	  Institute of Applied Mathematics,
	  Welfengarten 1, 30167 Hannover, Germany}

\affil[2]{Charles University, 
          Faculty of Mathematics and Physics,
          Sokolovska 83, 18675 Praha 8, Czech Republic}

\date{}

\maketitle

\KOMAoptions{abstract=true}
\begin{abstract}
\noindent
In this paper, we study a cancer invasion model both theoretically and numerically. 
The model is a nonstationary, nonlinear system of three coupled partial differential equations modeling
the motion of cancer cells, degradation of the extracellular matrix,
and certain enzymes. We first establish existence of global classical 
solutions in both two- and three-dimensional bounded domains,
despite the lack of diffusion of the matrix-degrading enzymes and corresponding regularizing effects in the analytical treatment.
Next, we give a weak formulation and apply finite differences in time 
and a Galerkin finite element scheme for spatial discretization. The overall
algorithm is based on a fixed-point iteration scheme. In order to substantiate 
our theory and numerical framework, several numerical simulations are carried out 
in two and three spatial dimensions.
  \\[0.5pt]
 \textbf{Key words:} {haptotaxis, tumour invasion, global existence, fixed-point scheme, numerical simulations} \\
 \textbf{AMS Classification (2020):} {
35A01, 
35K57, 
35Q92, 
65M22, 
65M60, 
92C17 
}
\end{abstract}

\section{Introduction}
\paragraph{The model.} One of the defining characteristics of a malignant tumour is its capability to invade adjacent tissues \cite{hallmarks_of_cancer}; accordingly the mathematical literature directed at understanding
underlying mechanisms is vast (see e.g.\ the surveys \cite{survey_lowengrub,sfakianakis_chaplain}).

In this paper we focus on the following variant of a cancer invasion model developed by Perumpanani et al.\ \cite{perumpanani99} for the malignant invasion of tumours and investigate
\begin{align}\label{eq:system}
  \begin{cases}
    u_t = \frac{1}{\alpha} \Delta u - \chi \nabla \cdot (u \nabla c) + \mu u (1 - u) & \text{in $\Omega \times (0, \infty)$}, \\
    c_t = - pc                                                                       & \text{in $\Omega \times (0, \infty)$}, \\
    p_t = \frac{1}{\eps} (uc - p)                                                    & \text{in $\Omega \times (0, \infty)$}, \\
    \frac{1}{\alpha} \partial_\nu u = \chi u \partial_\nu c                          & \text{on $\partial \Omega \times (0, \infty)$}, \\
    (u, c, p)(\cdot, 0) = (u_0, c_0, p_0)                                            & \text{in $\Omega$}.
  \end{cases}
\end{align} 
We aim for a rigorous existence proof for global solutions and 
the development of a reliable numerical scheme with an implementation 
in a modern open-source finite element library.

In \eqref{eq:system}, the motion of cancer cells (density denoted by $u$) mainly takes place by means of haptotaxis, i.e.\ directed motion toward higher concentrations of extracellular matrix (density $c$), of strength $χ\ge 0$. Motivated by experiments of Aznavoorian et al.\ \cite{aznavoorian}, who reported only ``a minor chemokinetic component'' of the cell motion, the original model of \cite{perumpanani99} does not include a term for random (chemokinetic) cell motility at all. Acknowledging that ``minor'' does not mean ``none at all'', we deviate from \cite{perumpanani99} in this aspect and incorporate this motility term in \eqref{eq:system} ($α\in (0,\infty)$, with the formal limit $α\to∞$ corresponding to the model of \cite{perumpanani99}). Additional growth of the population of tumour cells is described by a logistic term (with $μ$ being a positive parameter). The extracellular matrix is degraded upon contact with certain enzymes (proteases, concentration $p$), which, in turn, are produced where cancer cells and matrix meet and decay over time. The reaction speed of these protein dynamics can be adjusted via the parameter $\eps>0$. 
As many proteases remain bound to the cellular membrane -- or are only activated when on the cell surface (cf.\ the model derivation in \cite{perumpanani99}), no diffusion for $p$ is incorporated in the model.
This last point is in contrast to the otherwise similar popular models in the tradition of \cite{anderson_etal_2000,perumpanani_byrne} or \cite{chaplain_lolas}, the latter of which additionally included a chemotactic component of the motion of cancer cells.

\paragraph{Global solvability.} In order to construct global classical solutions of \eqref{eq:system},
it is necessary to control the haptotaxis term $-\chi \nabla \cdot (u \nabla c)$ in the first equation
and thus in particular to gain information on the spatial derivative of the second solution component.
For relatives of \eqref{eq:system} including a diffusion term $\Delta p$ in the third equation,
this has already been achieved in \cite{tao2007global} and \cite{LitcanuMorales-RodrigoAsymptoticBehaviorGlobal2010} by applying parabolic regularity theory to the equation for $p$,
first yielding estimates for the spatial derivative of $p$ and then also on $c$;
the results of \cite{LitcanuMorales-RodrigoAsymptoticBehaviorGlobal2010} even cover the long-term asymptotics of solutions.
Moreover, the presence of diffusion for the produced quantities has also been made use of to obtain global existence results for different cancer invasion models, see for instance \cite{ref13}.

However, the absence of any spatial regularization in both the second and third equation makes the corresponding analysis much more challenging.
Up to now, global classical solutions have only been constructed for a rather limited set of initial data:
Already in \cite{perumpanani99}, where \eqref{eq:system} has been proposed for $\alpha=\infty$, 
it has been shown that the model formally obtained by taking the limit $\eps \sea 0$ admits a family of travelling wave solutions.
Corresponding results for positive $\eps$ have then been achieved in \cite{marchant2001travelling}.
Moreover, if $\eps = 0$, travelling wave solutions may contain shocks \cite{perumpanani2000traveling}
and solutions of related systems without a logistic source may even blow up in finite time \cite{RascleZitiFiniteTimeBlowup1995}.
In general, the destabilizing effect of taxis terms such as $-\chi \nabla \cdot (u \nabla c)$ may not only make it challenging but even impossible to obtain global existence results for certain problems.
We refer to the survey \cite{LankeitWinklerFacingLowRegularity2019} for further discussion regarding the consequences of low regularity in chemotaxis systems.

Despite these challenges, in the first part of the present paper we are able to give an affirmative answer to the question whether \eqref{eq:system} also possesses global classical solutions for widely arbitrary initial data in the two and three-dimensional setting. Our analytical main result is the following
\begin{theorem}\label{th:global_ex}
  Suppose that $\alpha, \chi, \mu, \eps$ are positive constants, that
  \begin{align*}
    \Omega \text{ is a smooth bounded domain in $\R^n$, $n \in \{1, 2, 3\}$,}
  \end{align*}
  and that $u_0, c_0, p_0 \in \bigcup_{\gamma \in (0, 1)} \con{2+\gamma}$ are nonnegative and such that $\frac{1}{\alpha} \partial_\nu u_0 = \chi u_0 \partial_\nu c_0$ on $\partial \Omega$.
  Then there exists a unique global classical solution $(u, c, p)$ of \eqref{eq:system} with regularity
   \begin{align*}
     (u, c, p) \in \big( C^{2, 1}(\Ombar \times (0, \infty)) \cap C^1(\Ombar \times [0, \infty)) \big)^3,
   \end{align*}
   which, moreover, is nonnegative.
\end{theorem}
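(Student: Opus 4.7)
The plan is to combine a standard local existence/extensibility argument with a sequence of a priori estimates that close a bootstrap, then upgrade the resulting $L^\infty$-bound to classical regularity.

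\emph{Local theory and elementary bounds.} Since $c$ and $p$ solve spatially parametrised ODEs driven by $u$, while $u$ satisfies a quasilinear parabolic equation with coefficients depending on $c$, the system has the triangular structure amenable to Amann-type theory (or a direct Banach fixed-point argument in Hölder spaces, the compatibility condition $\tfrac{1}{\alpha}\partial_\nu u_0 = \chi u_0 \partial_\nu c_0$ ensuring a nontrivial ball of admissible initial data). This would furnish a unique classical solution on some maximal interval $[0,\tmax)$, together with an extensibility criterion of the form: if $\tmax < \infty$, then $\|u(\cdot,t)\|_{L^\infty} + \|\nabla c(\cdot,t)\|_{L^\infty} \to \infty$ as $t \nearrow \tmax$. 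Non-negativity of $(u,c,p)$ follows by expressing $c$ and $p$ through Duhamel-type integrals driven by $u$ and applying the parabolic maximum principle to $u$; hence $c_t = -pc \le 0$ gives $\|c(\cdot,t)\|_{L^\infty} \le \|c_0\|_{L^\infty}$ throughout $[0,\tmax)$, and testing the $u$-equation against $1$ (the haptotaxis flux cancels by the boundary condition) yields $\frac{d}{dt}\int_\Omega u = \mu \int_\Omega u - \mu \int_\Omega u^2$ and hence a time-uniform $L^1$-bound on $u$.

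\emph{The core obstacle: closing the estimates without diffusion in $c$ or $p$.} Here the lack of spatial regularisation in the second and third equations is the fundamental difficulty, precluding the direct parabolic-regularity approach of \cite{tao2007global,LitcanuMorales-RodrigoAsymptoticBehaviorGlobal2010}. The plan is a bootstrap through a hierarchy of $L^q$-estimates. Testing the $u$-equation against $u^{q-1}$ and integrating by parts converts the haptotaxis term into a multiple of $\int_\Omega u^q(-\Delta c)$. In parallel, differentiating the pointwise ODEs for $c$ and $p$ yields
\[
(\nabla c)_t = -c \nabla p - p \nabla c, \qquad \eps (\nabla p)_t = c \nabla u + u \nabla c - \nabla p,
\]
together with analogous equations for $\Delta c$ and $\Delta p$, so that $L^q$-norms of these derivatives can be controlled — via Gronwall — in terms of $L^q$-norms of $u$ and $\nabla u$. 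The game is to choose exponents so that the combined system of differential inequalities for $\int u^q$, $\int |\nabla c|^r$, $\int |\nabla p|^r$, and the corresponding second-derivative norms closes, bootstrapping from the $L^1$-bound on $u$ to arbitrarily large $q$. A Moser-type iteration (or semigroup estimates for the heat equation with Neumann flux data) then converts the resulting $L^q$-bounds into an $L^\infty$-bound on $u$ uniform in $[0,\tmax)$. A useful simplification is the substitution $w := u \ure^{-\alpha \chi c}$, which transforms the first equation into a bounded-coefficient advection-reaction-diffusion equation for $w$ with homogeneous Neumann data, so that $L^q$-testing produces only lower-order cross terms depending on the (already controlled) pair $(c,p)$.

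\emph{Conclusion.} Once $\|u(\cdot,t)\|_{L^\infty}$ is bounded, the Duhamel formula for $p$ immediately gives a corresponding $L^\infty$-bound, and linear Gronwall applied to the above equations for $\nabla c$ and $\nabla p$ yields their $L^\infty$-bounds, contradicting the extensibility criterion and forcing $\tmax = \infty$. Parabolic Schauder theory applied to the $u$-equation with now Hölder-continuous drift and reaction, coupled with pointwise Hölder estimates for the $c$- and $p$-ODEs, then delivers the claimed $C^{2,1}(\Ombar \times (0,\infty)) \cap C^1(\Ombar \times [0,\infty))$ regularity; uniqueness is routine via energy estimates for the difference of two solutions. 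The technical heart is the middle step: the interlocking nonlinear couplings, with no regularising term available to absorb any of them, and the need to select exponents (likely dimension-dependent, since the Sobolev embeddings differ markedly for $n=2$ versus $n=3$) so that the bootstrap actually closes.
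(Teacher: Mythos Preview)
Your overall architecture---local existence, the substitution $w=u\ure^{-\alpha\chi c}$, a priori estimates, extensibility---matches the paper's. But two points deserve attention.

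\textbf{A different route to $L^\infty$.} You propose $L^q$-testing and Moser iteration for $\|u\|_{L^\infty}$. The paper instead observes that after the $w$-substitution the first equation reads
\[
w_t=\tfrac1\alpha\Delta w+\nabla c\cdot\nabla w+w(\alpha pc+\mu-\mu\ure^{\alpha c}w),
\]
so any sufficiently large \emph{constant} is a supersolution once $p$ and $c$ are controlled. The trick (Lemma~\ref{lm:p_linfty_le_w_linfty}) is that on a short interval $[0,T^\star]$ Duhamel gives $\|p\|_\infty\le\|p_0\|_\infty+\delta\|w\|_\infty$ with $\delta$ as small as one likes; choosing $\delta<\mu/(2\alpha\|c_0\|_\infty)$ lets the logistic term absorb the $p$-contribution and the comparison principle closes on $[0,T^\star]$. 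Iterating interval-by-interval (Lemma~\ref{lm:w_linfty_tmax}) gives $L^\infty$ on all of $[0,\tmax)$. Your $L^q$ plan may well work (the $-\mu w^{q+1}$ term is what would absorb the cross terms), but it is considerably more laborious, and you treat the $w$-substitution as a cosmetic simplification rather than the structural device it is.

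\textbf{A genuine gap.} Your concluding step claims that once $\|u\|_{L^\infty}$ is bounded, ``linear Gronwall applied to the equations for $\nabla c$ and $\nabla p$ yields their $L^\infty$-bounds''. This does not work: the equation $\eps(\nabla p)_t=c\nabla u+u\nabla c-\nabla p$ contains $c\nabla u$, so Gronwall in $L^\infty$ requires $\nabla u\in L^\infty$, which you do not have. The paper closes this loop differently: after the $L^\infty$ bounds it tests the $w$-equation with $-\Delta w$ (Lemma~\ref{lm:c_w14_t1}), uses Gagliardo--Nirenberg and the ODE-Gronwall bound $\|\nabla c\|_{L^4}^4\lesssim\|\nabla c_0\|_{L^4}^4+\|\nabla p_0\|_{L^4}^4+\int_0^t\|\nabla w\|_{L^4}^4$ to get $\nabla c\in L^\infty_t L^4_x$ on short intervals (again iterated), and then invokes maximal Sobolev regularity (Lemma~\ref{lm:max_sob_reg}) twice---first with $q=4$, then $q=12$---to reach $w\in C^{1+\gamma,\cdot}$ and contradict the extensibility criterion. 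This gradient bootstrap is the technical heart you are missing; the ``second-derivative norms'' you allude to in passing would have to do equivalent work, but you have not indicated how.
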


\paragraph{Numerical modeling.} In the second part of our paper we then analyze the behavior of 
these solutions numerically with an implementation in the modern open-source 
finite element library deal.II \cite{dealii2019design,dealII91}.
Related numerical studies in various software libraries using 
different numerical schemes are briefly described in the following.
The traditional method of lines has been widely used for simulations of 
the cancer invasion process~\cite{gerisch2008mathematical, domschke2014mathematical}. In addition, finite difference 
methods~\cite{kolev2011numerical}
have been considered and in \cite{chapwanya2014positivity, khalsaraei2016positivity}, 
the authors 
proposed a nonstandard finite difference method which satisfies 
the positivity-preservation of the solution, that is an important 
property in the stability of the model.
Moreover, the finite volume method \cite{chertock2008second}, spectral element 
methods \cite{valenciano2003computing}, 
algebraically stabilized finite element method \cite{strehl2013positivity}, 
the discontinuous Galerkin method \cite{epshteyn2009discontinuous}, 
combinations of level-set/adaptive finite elements \cite{zheng2005nonlinear, amoddeo2015adaptive}, 
and a hybrid finite volume/finite element method \cite{chaplain_lolas_big_model, anderson2005hybrid} 
have also been proposed in the literature for some cancer 
invasion models and chemotaxis. Finally, we mention that in
\cite{surulescu_winkler_2021} the authors illustrate their theoretical results for a related 
cancer model employing discontinuous Galerkin finite elements implemented as well
in deal.II.

The main objective in the numerical part is the design
of reliable algorithms for \eqref{eq:system} and their corresponding implementation
in deal.II. First, 
we discretize in time using a $\theta$-method, which allows for implicit $A$-stable 
time discretizations. Then, a Galerkin finite element scheme is employed for 
spatial discretization. The nonlinear discrete system of equations is decoupled by 
designing a fixed-point algorithm. This algorithm is newly designed and then 
implemented and debugged in deal.II. 

These developments then allow to link our theoretical part and the numerical 
sections in order to carry out various numerical simulations to complement
Theorem~\ref{th:global_ex}. Specifically, several parameter variations 
of the proliferation coefficient $\mu$ and the haptotactic coefficient $\chi$ will
be studied in two- and three spatial dimensions. These studies are non-trivial 
due to the nonlinearities and the high sensitivity of \eqref{eq:system} 
with respect to such parameter variations.

\paragraph{Plan of the paper.}
The outline of this paper is as follows. In Section~\ref{sec_existence_classical},
we study the global existence of classical solutions. Next, in 
Section~\ref{sec_discretization}, we introduce the discretization in time and space 
using finite differences in time and a Galerkin finite element scheme in space.
We also describe the solution algorithm. In Section~\ref{sec_tests}, we carry 
out several numerical simulations demonstrating the properties of our model and the corresponding 
theoretical results. Therein, we specifically study  parameter variations.
Finally, our work is summarized in Section~\ref{sec_conclusions}.

\paragraph{Notation.}
Let $\Omega \subset \R^n$, $n \in \N$ be a bounded domain.
By $L^p(\Omega)$ and $W^{1, p}(\Omega)$, we denote the usual Lebesgue and Sobolev spaces, respectively,
and we abbreviate $H^1(\Omega) \defs \sob12$.
Furthermore,
$\langle\cdot , \cdot \rangle$ denotes the duality product between $(H^1)^*$ and $H^1$.

For $m \in \N_0$ and $\gamma \in (0, 1)$, we denote by $\con{m+\gamma}$ the space of functions $\varphi \in \con m$ with finite norm
\begin{align*}
  \|\varphi\|_{\con{m+\gamma}} \defs \|\varphi\|_{\con m} + \sup_{x, y \in \Ombar, x \neq y} \frac{|\varphi(x) - \varphi(y)|}{|x-y|}.
\end{align*}
Moreover, for $m_1, m_2 \in \N_0$, $\gamma_1, \gamma_2 \in [0, 1)$ and $T > 0$,
we denote by $\con[{\Ombar \times [0, T]}]{m_1+\gamma_1, m_2+\gamma_2}$ the space of all functions $\varphi$
whose derivatives $D_x^\alpha D_t^\beta \varphi$, $|\alpha| \le m_1$, $0 \le \beta \le m_2$, (exist and) are continuous, and which have finite norm
\allowdisplaybreaks[0]
\begin{align*}
  &      \|\varphi\|_{\con[{\Ombar \times [0, T]}]{m_1+\gamma_1, m_2+\gamma_2}}
   \defs \sum_{\substack{|\alpha| \le m_1, \\0 \le \beta \le m_2}} \|D_x^\alpha D_t^\beta \varphi\|_{\con[{\Ombar \times [0, T]}]{0}} \\
  &+     \sum_{\substack{|\alpha| = m_1, \\0 \le \beta \le m_2}} \sup_{\substack{x, y \in \Ombar, x \neq y, \\ t \in [0, T]}} \frac{|D_x^\alpha D_t^\beta \varphi(x, t) - D_x^\alpha D_t^\beta \varphi(y, t)|}{|x-y|^{\gamma_1}} 
   +     \sum_{\substack{|\alpha| \le m_1, \\ \beta = m_2}} \sup_{\substack{x \in \Ombar, \\ s, t \in [0, T], s \neq t}} \frac{|D_x^\alpha D_t^\beta \varphi(x, t) - D_x^\alpha D_t^\beta \varphi(x, s)|}{|t-s|^{\gamma_2}}.
\end{align*}
\allowdisplaybreaks
Notationally, we do not distinguish between spaces of scalar- and vector-valued functions.

\section{Global existence of classical solutions}\label{sec_existence_classical}
As a first step in the proof of Theorem~\ref{th:global_ex}, we apply two transformations in Subsection~\ref{sec:transformations};
the first one allows us to get rid of some parameters in \eqref{eq:system}, the second one changes the first equation to a more convenient form.
We then employ a fixed point argument to obtain a local existence result for the transformed system in Lemma~\ref{lm:local_ex}.

The proof that these solutions are global in time consists of two key parts,
both relying on the fact that the second and third equation in \eqref{eq:system} at least regularize in time (which allows us to prove Lemma~\ref{lm:p_linfty_le_w_linfty} and Lemma~\ref{lm:c_w1q_rel}).
First, in order to prove boundedness in $L^\infty$,
the comparison principle allows us to conclude boundedness in small time intervals (cf.\ Lemma~\ref{lm:w_linfty_t0}).
We then iteratively apply this bound to obtain the result also for larger times (cf.\ Lemma~\ref{lm:w_linfty_tmax}).
As to bounds for the spatial derivatives, we secondly apply a testing procedure to derive estimates valid on small time intervals (cf.\ Lemma~\ref{lm:c_w14_t1}),
which then is again complemented by an iteration procedure (cf.\ Lemma~\ref{lm:c_w14_tmax}).
Finally, we are able to make use of parabolic regularity theory (inter alia in the form of maximal Sobolev regularity) to conclude in Lemma~\ref{lm:global_ex} that the solutions exist globally.

\subsection{Two transformations}\label{sec:transformations}
We first note that with regards to Theorem~\ref{th:global_ex} we may without loss of generality assume $\chi = 1$ and $\eps = 1$.
Indeed, suppose that Theorem~\ref{th:global_ex} holds for this special case.
Then, assuming the conditions of Theorem~\ref{th:global_ex} to hold, we set
\begin{align*}
  \tilde \alpha \defs \frac{\alpha \chi}{\eps}, \quad
  \tilde \chi \defs 1, \quad
  \tilde \mu \defs \eps \mu, \quad
  \tilde \eps \defs 1
\end{align*}
and further
\begin{align*}
  \tilde u_0(\tilde x) = u_0(\sqrt \chi \tilde x), \quad
  \tilde c_0(\tilde x) = \eps c_0(\sqrt \chi \tilde x), \quad 
  \tilde p_0(\tilde x) = \eps p_0(\sqrt \chi \tilde x)
\end{align*}
for $\tilde x \in \tilde \Omega \defs \frac{1}{\sqrt \chi} \Omega$.
By Theorem~\ref{th:global_ex}, there then exists a global classical solution of \eqref{eq:system} (with all parameters and initial data replaced by their pendants with tildes) $(\tilde u, \tilde c, \tilde p)$.
Then
\begin{align}\label{eq:trans_1}
  (u, c, p)(x, t) \defs \big( \tilde u(\tfrac{x}{\sqrt \chi}, \tfrac{t}{\eps}),\, \tfrac1\eps \tilde c(\tfrac{x}{\sqrt \chi}, \tfrac{t}{\eps}),\, \tfrac1\eps \tilde p(\tfrac{x}{\sqrt \chi}, \tfrac{t}{\eps}) \big), \quad (x, t) \in (\Ombar \times [0, \infty)),
\end{align}
fulfills
\begin{alignat*}{2}
  \begin{cases}
     u_t = \frac{\chi}{\tilde \alpha \eps} \Delta  u - \frac{\eps \chi}{\eps} \nabla \cdot ( u \nabla  c) + \frac{\tilde \mu}{\eps} u (1 - u) & \text{in $\Omega \times (0, \infty)$}, \\[0.2em]
     c_t = - \frac{\eps^2}{\eps^2}  c  p                                                                                                      & \text{in $\Omega \times (0, \infty)$}, \\[0.2em]
     p_t = \frac1{\eps^2} (\eps  u  c - \eps p)                                                                                               & \text{in $\Omega \times (0, \infty)$}, \\[0.2em]
     \partial_\nu  u = \frac{\tilde \alpha \eps \sqrt{\chi}}{\sqrt{\chi}}  u \partial_\nu  c                                                  & \text{on $\partial \Omega \times (0, \infty)$}, \\[0.2em]
     ( u,  c,  p)(x, 0) = (\tilde u_0, \frac1\eps \tilde c_0, \frac1\eps \tilde p_0)(\frac{x}{\sqrt\chi})                                     & \text{for $x \in \Omega$}
  \end{cases}
\end{alignat*}
and thus \eqref{eq:system}.
Moreover, following precedents from, e.g.\ \cite[p.19]{corrias_perthame_zaag}, we set 
\begin{equation}
 w(x,t) \coloneqq u(x,t) \ure^{-αc(x,t)},\qquad x\in \Ombar,\; t\ge 0.
\end{equation}
Then $∇w = \ure^{-αc}∇u - α \ure^{-αc}u∇c$, so that
\begin{align}\label{eq:transformation:12_order_terms}
  \tfrac{1}{\alpha} \Delta w + \nabla c \cdot \nabla w = \ure^{-\alpha c} ∇\cdot(\tfrac{1}{α}\ure^{αc}∇w)=\ure^{-\alpha c}[\tfrac1{α}Δu - ∇\cdot(u∇c)]
\end{align}
and \eqref{eq:system} (with $\chi=\eps=1$) is equivalent to 
\begin{align}\label{eq:system-w}
  \begin{cases}
    w_t = \frac{1}{\alpha} \Delta w + \nabla c \cdot \nabla w + αpcw + μw-μ\ure^{αc}w^2 & \text{in $\Omega \times (0, \infty)$},\\
    c_t = - pc                                                                          & \text{in $\Omega \times (0, \infty)$}, \\
    p_t = w\ure^{αc}c -p                                                                & \text{in $\Omega \times (0, \infty)$}, \\
    \partial_\nu w = 0                                                                  & \text{on $\partial \Omega \times (0, \infty)$}, \\
    (w, c, p)(\cdot, 0) = (w_0 , c_0, p_0)                                              & \text{in $\Omega$}
  \end{cases}
\end{align}
for $w_0 \defs u_0\ure^{-αc_0}$.
Expanding $\nabla \cdot (u \nabla c)$ to $\nabla u \cdot \nabla c + u \Delta c$
shows that this transformation allows us to get rid of a term involving $\Delta c$ in the first equation at the price of adding several zeroth order terms.
In particular as the second equation does not regularize in space, \eqref{eq:system-w} turns out to be a more convenient form for the following analysis.

\subsection{Local existence}
In this subsection, we construct maximal classical solutions of \eqref{eq:system-w} in $\Ombar \times [0, \tmax)$ for some $\tmax \in [0, \infty)$ by means of a fixed point argument.
Moreover, we provide a criterion for when these solutions are global in time (that is, when $\tmax = \infty$ holds), which then will finally be seen to hold true in Lemma~\ref{lm:global_ex}.

As a preparation, we first collect results on (Hölder) continuous dependency of solutions to ODEs on the data.
\begin{lemma}\label{lm:ODEgeneral}
 Let $\Omega \subset \R^n$, $n \in \N$, be a bounded domain, $T>0$, $d\in ℕ$, $γ_1,γ_2\in[0,1)$, $v_0\in C^{γ_1}(\Ombar)$ and assume that $f\colon \Ombar \times [0,T]\times ℝ^d\to ℝ^d$ is $(γ_1,γ_2)$-Hölder continuous with respect to its first two arguments and locally Lipschitz continuous w.r.t.\ the third variable, in the sense that for every compact $K\subset ℝ^d$ there is $L>0$ such that 
 \begin{align*}
  \sup_{t\in[0,T],v\in K} \norm[C^{γ_1}(\Ombar)]{f(\cdot,t,v)} &\le L,\\
  \sup_{x\in\Ombar,v\in K} \nnorm{C^{γ_2}([0,T])}{f(x,\cdot,v)}&\le L,\\
  \sup_{(x,t)\in \Ombar\times[0,T]} |f(x,t,v)-f(x,t,w)| &\le L|v-w| \quad \text{for all } v,w\in K.
 \end{align*}
 Then for any compact $K\subset ℝ^d$ there is $C>0$ such that whenever $v \colon \Ombar \times [0, T] \to \R^d$ is such that $v(x,\cdot)\in C^0([0,T])\cap C^1((0,T))$ for all $x\in \Ombar$, $v$ solves
 \begin{equation}
  v_t(x,t) = f(x,t,v(x,t)) \quad \text{ for all } x\in \Ombar, t\in(0,T);\qquad v(x,0)=v_0(x)\quad \text{for all } x\in \Ombar
 \end{equation}
 and satisfies $v(\Ombar\times[0,T])\subset K$, then $v,v_t\in C^{γ_1,γ_2}(\Ombar\times[0,T])$ and 
 \[
  \nnorm{C^{γ_1,1+γ_2}(\Ombar\times[0,T])}{v}\le C.
 \]
%
\end{lemma}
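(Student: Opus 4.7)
The plan is to exploit the integral form of the ODE together with the assumed Hölder/Lipschitz bounds on $f$ and a Grönwall argument. The bounds on $\norm[C^0]{v_t}$ follow immediately from the assumption $v(\Ombar\times[0,T])\subset K$ and the first estimate on $f$ (or the Lipschitz condition combined with the temporal Hölder condition evaluated at a single point): indeed, $|v_t(x,t)|=|f(x,t,v(x,t))|\le L$, so $v$ is $L$-Lipschitz in $t$ uniformly in $x$.

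The spatial Hölder bound on $v$ itself is the only step where a Grönwall-type estimate enters. For $x,y\in\Ombar$ and $t\in[0,T]$, the integral equation gives
\begin{align*}
  |v(x,t)-v(y,t)|
  &\le |v_0(x)-v_0(y)|+\int_0^t |f(x,s,v(x,s))-f(y,s,v(x,s))|\ds \\
  &\pe +\int_0^t |f(y,s,v(x,s))-f(y,s,v(y,s))|\ds \\
  &\le \bigl(\nnorm{C^{\gamma_1}(\Ombar)}{v_0}+LT\bigr)|x-y|^{\gamma_1}+L\int_0^t |v(x,s)-v(y,s)|\ds,
\end{align*}
so Grönwall yields $|v(x,t)-v(y,t)|\le C_1 |x-y|^{\gamma_1}$ with a constant $C_1$ depending only on $L$, $T$, and $\nnorm{C^{\gamma_1}(\Ombar)}{v_0}$. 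Combined with the Lipschitz bound in $t$, this already gives $v\in C^{\gamma_1,1}(\Ombar\times[0,T])$ with a uniform bound.

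For $v_t=f(\cdot,\cdot,v(\cdot,\cdot))$, both Hölder moduli are obtained by inserting and subtracting an intermediate value and invoking the three assumed conditions on $f$ together with the regularity of $v$ just derived. Concretely,
\begin{align*}
  |v_t(x,t)-v_t(y,t)| &\le L|x-y|^{\gamma_1}+L|v(x,t)-v(y,t)|\le (L+LC_1)|x-y|^{\gamma_1}, \\
  |v_t(x,t)-v_t(x,s)| &\le L|t-s|^{\gamma_2}+L|v(x,t)-v(x,s)|\le L|t-s|^{\gamma_2}+L^2|t-s|,
\end{align*}
and since $|t-s|\le T^{1-\gamma_2}|t-s|^{\gamma_2}$ the last term is controlled by $L^2 T^{1-\gamma_2}|t-s|^{\gamma_2}$. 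Summing these estimates yields a bound on $\nnorm{C^{\gamma_1,\gamma_2}(\Ombar\times[0,T])}{v_t}$ and together with the bound on $v$ itself, the desired estimate on $\nnorm{C^{\gamma_1,1+\gamma_2}(\Ombar\times[0,T])}{v}$.

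The argument contains no real obstacle; the only minor subtlety is keeping track of the different Hölder exponents in the two variables and ensuring that the Lipschitz-in-$t$ regularity of $v$ is interpolated into the weaker $\gamma_2$-Hölder norm using the finiteness of $T$, which is why the constant $C$ is allowed to depend on $T$.
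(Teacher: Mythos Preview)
Your proof is correct and follows essentially the same approach as the paper: a Grönwall argument for the spatial Hölder regularity of $v$, followed by reading off the regularity of $v_t$ from the equation $v_t=f(\cdot,\cdot,v)$. The only cosmetic difference is that the paper differentiates the normalized quantity $\tilde v(t)=(v(x,t)-v(y,t))/\omega_1(|x-y|)$ and applies Grönwall in differential form, whereas you work with the integral equation directly; your observation that $|v_t|\le L$ yields Lipschitz continuity in $t$ outright is slightly more efficient than the paper's separate treatment of the temporal modulus.
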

\begin{proof} 
First, we let $K$ be a compact superset of $v(\Ombar\times[0,T])$ and let $L$ be as in the assumptions on $f$. We introduce $ω_1\in C^0([0,∞))$ such that $|f(x,t,w)-f(y,t,w)|\le ω_1(|x-y|)$ for all $x\in \Ombar$, $y\in \Ombar$, $t\in[0,T]$ and $w\in K$, 
$|v_0(x)-v_0(y)|\le ω_1(|x-y|)$ for all $x\in \Ombar$, $y\in \Ombar$ 
and such that $ω_1(0)=0$ and $\sup_{r>0} r^{-γ_1}ω_1(r)<∞$. 
We then fix $x\in \Ombar$, $y\in\Ombar\setminus\set{x}$ and let $\tilde v(t)=(v(x,t)-v(y,t))\cdot\frac1{ω_1(|x-y|)}$. Then 
\allowdisplaybreaks[0]
\begin{align*}
  \tilde v_t(t) &= \f1{ω_1(|x-y|)}(f(x,t,v(x,t))-f(y,t,v(x,t)) + \f1{ω_1(|x-y|)} (f(y,t,v(x,t))-f(y,t,v(y,t))) \\
  &\le 1 + \f L{ω_1(|x-y|)} |v(x,t)-v(y,t)| \qquad \text{for all $t \in (0, T)$},
 \end{align*}
\allowdisplaybreaks
so that $\tilde v_t \le 1 + L|\tilde v|$ and, analogously, $\tilde v_t \ge - 1 - L|\tilde v|$, so that boundedness of $|\tilde v|$  results from Grönwall's inequality
and hence $v \in C^{\gamma_1, 0}(\Ombar \times [0, T])$, and $\sup_{t\in[0,T]}\norm[C^{γ_1}(\Ombar)]{v(\cdot,t)}$ is bounded due to the choice of $\omega_1$.

For $τ>0$, we treat $\bar v(x,t)=(v(x,t)-v(x,t+τ))/ω_2(τ)$ with some $ω_2$ such that $|f(x,t,v)-f(x,t+τ,v)|\le ω_2(τ)$ in the same way. 
This ensures the claimed regularity of $v$, whereupon that of $v_t$ follows from $v_t(x,t)=f(x,t,v(x,t))$, $(x,t)\in \Ombar\times(0,T)$ and continuity of the right-hand side up to $t=0$ and $t=T$.
\end{proof}

\begin{lemma}\label{lm:ODEgeneral-higher}
 In addition to the assumptions of Lemma~\ref{lm:ODEgeneral}, let $m \in \N$ and $v_0\in C^{m+γ_1}(\Ombar)$. If all derivatives of $f$ w.r.t.\ $x$ and $v$ up to order $m$ satisfy the conditions Lemma~\ref{lm:ODEgeneral} poses on $f$, then any solution $v$ as in Lemma~\ref{lm:ODEgeneral} belongs to $C^{m+γ_1,1+γ_2}(\Ombar\times[0,T])$.
\end{lemma}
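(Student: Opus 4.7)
The plan is to argue by induction on $m$, reducing the regularity of $v$ at level $m$ to that of its first spatial derivatives at level $m-1$. For $m = 1$, the inductive hypothesis is Lemma~\ref{lm:ODEgeneral} itself, and for $m \ge 2$ it is Lemma~\ref{lm:ODEgeneral-higher} applied with parameter $m-1$. Given the hypotheses for level $m$, the weaker hypotheses for level $m-1$ are automatically satisfied (since $C^{m+\gamma_1} \subset C^{m-1+\gamma_1}$ and every derivative of order $\le m-1$ is in particular a derivative of order $\le m$), so the inductive hypothesis supplies $v \in C^{m-1+\gamma_1, 1+\gamma_2}(\Ombar \times [0, T])$.

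For each $i \in \{1, \ldots, n\}$, I would then formally differentiate the ODE $v_t = f(x, t, v(x,t))$ in $x_i$ and consider the auxiliary linear problem
\[
  w_t = g_i(x, t, w) \defs (\partial_{x_i} f)(x, t, v(x, t)) + (D_v f)(x, t, v(x, t))\, w, \qquad w(x, 0) = \partial_{x_i} v_0(x).
\]
The preparatory step is to verify that $g_i$ satisfies the hypotheses of Lemma~\ref{lm:ODEgeneral-higher} at level $m-1$: linearity in $w$ makes the Lipschitz and $w$-derivative conditions automatic, while a chain-rule expansion combines the assumed Hölder control of the derivatives of $f$ up to order $m$ with the inductive regularity of $v$ to produce uniform Hölder bounds on the $x$-derivatives of $g_i$ up to order $m-1$. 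The inductive hypothesis then yields a unique $w_i \in C^{m-1+\gamma_1, 1+\gamma_2}(\Ombar \times [0, T])$.

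To identify $w_i$ with $\partial_{x_i} v$, I would pass to the limit in the difference quotient $h^{-1}[v(\cdot + h e_i, \cdot) - v]$, which satisfies an ODE whose right-hand side converges uniformly as $h \to 0$ to $g_i(x, t, w_i)$; a Grönwall estimate in the spirit of the one already used inside the proof of Lemma~\ref{lm:ODEgeneral} secures uniform convergence of the quotients to $w_i$. Iterating this for $i = 1, \ldots, n$ places all $m$-th order spatial derivatives of $v$ in $C^{\gamma_1, 1+\gamma_2}$. The missing time regularity of $D_x^\alpha v$ for $|\alpha| \le m$ is then recovered by differentiating the identity $v_t = f(x, t, v(x,t))$ directly and invoking the same chain-rule bounds.

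The main technical obstacle is the bookkeeping involved in expanding $D_x^\alpha g_i$ and checking that the resulting uniform Hölder constants stay under control at level $m-1$; conceptually, however, the induction is clean and reduces each additional spatial derivative of $v$ to the first-derivative situation already handled by Lemma~\ref{lm:ODEgeneral}.
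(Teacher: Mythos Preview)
Your proposal is correct and follows essentially the same route as the paper: differentiate the ODE in $x_i$, observe that $\partial_{x_i} v$ solves a new ODE of the same structural type with right-hand side $f_{x_i}(x,t,v) + f_v(x,t,v)\,\partial_{x_i} v$, and then iterate inductively. The paper's proof is terser and simply asserts that $\tilde v = \partial_{x_i} v$ satisfies the differentiated equation before invoking Lemma~\ref{lm:ODEgeneral}, whereas you additionally justify the existence of $\partial_{x_i} v$ via a difference-quotient argument; this extra care is welcome but not a different strategy.
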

\begin{proof}
 For $i\in\set{1,\ldots,n}$, $\tilde v=∂_{x_i} v$ satisfies 
 \[
  \tilde v_t = f_{x_i}(x,t,v(x,t)) + f_v(x,t,v(x,t))\tilde v \quad \text{in } \Om\times(0,T), \qquad \tilde v(\cdot,0)=(v_0)_{x_i} \text{ in } \Om
 \]
 and Lemma~\ref{lm:ODEgeneral} can be applied to $\tilde v$. An inductive argument takes care of higher derivatives. 
\end{proof}

By applying these results to the ODEs appearing in \eqref{eq:system-w}, we obtain
\begin{lemma}\label{lm:ODE}
 Let $\Omega\subset ℝ^n$, $n \in \N$, be a bounded domain, $α\ge 0$ and $T\in(0,\infty)$. Let $0\le w\in C^0(\Ombar\times[0,T])$ and $c_0,p_0\in C^0(\Ombar;[0,\infty))$. \\
 a) Then 
 \begin{align}\label{ode-cp}
  \begin{cases}
   c_t = -pc\\
   p_t = w\ure^{αc}c - p
  \end{cases}
 \end{align}
 has a unique solution $(c,p)\in (C^{0,1}(\Ombar\times[0,T]))^2$.\\
 b) For any $M>0$ there is $C=C(M)>0$ such that whenever $\tilde T\in(0,T]$, $w\in C^{1,0}(\Ombar\times[0,\tilde T])$, $c_0, p_0\in C^1(\Ombar)$ with 
 \[
  \sup_{t\in[0,\tilde T]}\norm[C^1(\Ombar)]{w(\cdot,t)} \le M, \quad \norm[C^1(\Ombar)]{c_0}\le M,\quad \norm[C^1(\Ombar)]{p_0}\le M, 
 \]
 then 
 \[
  \nnorm{C^1(\Ombar\times[0,\tilde T])}{(c,p)}\le C.
 \]
 c) If, for some $k\in ℕ_0$ and $γ_1,γ_2\in[0,1)$, $w\in C^{k+γ_1,γ_2}(\Ombar\times[0,T])$ and $c_0, p_0\in C^{k+γ_1}(\Ombar)$, then $c,p\in C^{k+γ_1,1+γ_2}(\Ombar\times[0,T])$.
\end{lemma}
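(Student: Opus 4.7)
I would view \eqref{ode-cp} as a parametrized ordinary differential equation $v_t = f(x,t,v)$, $v(x,0)=(c_0(x), p_0(x))$, in $v = (c,p) \in \R^2$ with $x \in \Ombar$ acting as a parameter, and apply Lemma~\ref{lm:ODEgeneral} and Lemma~\ref{lm:ODEgeneral-higher} to the vector field
\[
f(x,t,(c,p)) = \bigl(-p c,\; w(x,t)\, c\, \ure^{\alpha c} - p\bigr).
\]
This $f$ is a polynomial-times-exponential expression in $(c,p)$ and inherits its $(x,t)$-regularity from $w$, so once $(c,p)$ is confined to a fixed compact $K \subset [0,\infty)^2$, all hypotheses of those lemmas are met.

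To construct the solution and the required bounds, I would first obtain pointwise-in-$x$ existence on a maximal subinterval of $[0,T]$ by Picard--Lindelöf and then exploit the integral representations
\[
c(x,t) = c_0(x)\,\exp\Bigl(-\int_0^t p(x,s)\ds\Bigr), \qquad p(x,t) = \ure^{-t} p_0(x) + \int_0^t \ure^{s-t}\, w(x,s)\, c(x,s)\, \ure^{\alpha c(x,s)}\ds,
\]
both valid on any interval of existence. The first yields $c \ge 0$ regardless of the sign of $p$; the second then yields $p \ge 0$ using $w, c_0, p_0 \ge 0$. Combining these we get $0 \le c \le \|c_0\|_{L^\infty}$ and $0 \le p \le \|p_0\|_{L^\infty} + T \|w\|_{L^\infty(\Ombar \times [0,T])}\,\|c_0\|_{L^\infty}\, \ure^{\alpha \|c_0\|_{L^\infty}}$. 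These a priori bounds preclude blow-up and extend the pointwise solution to all of $[0,T]$, while simultaneously confining $(c,p)$ to a fixed compact $K \subset [0,\infty)^2$.

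With $K$ fixed, part~(a) follows from Lemma~\ref{lm:ODEgeneral} applied with $\gamma_1=\gamma_2=0$: the required continuity/Lipschitz conditions on $f$ reduce to uniform continuity of $w$ on $\Ombar\times[0,T]$ and local Lipschitz dependence of $f$ on $v\in K$ (with constant controlled by $\|w\|_{L^\infty}$, $\alpha$, and $K$), both of which are immediate. Uniqueness follows from a pointwise Grönwall estimate applied to the difference of two solutions. Part~(c) is a direct application of Lemma~\ref{lm:ODEgeneral-higher} with $m=k$, since the derivatives of $f$ up to order $k$ in $x$ and $v$ inherit the same kind of estimates on $K$ as soon as $w\in C^{k+\gamma_1,\gamma_2}$. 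Part~(b) is the quantitative case $k=1$, $\gamma_1=\gamma_2=0$: a look at the constants produced by Lemmas~\ref{lm:ODEgeneral} and~\ref{lm:ODEgeneral-higher} shows that they depend only on $K$ (itself determined by $M$) and on the $C^1$-norms of $w$, $c_0$, $p_0$, all of which are bounded by $M$.

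The only mild obstacle is ensuring the nonnegativity/$L^\infty$ bounds without circularity—this is handled by the standard continuation argument along the maximal interval of pointwise existence, as sketched above. Beyond that, everything is a mechanical specialization of the already-proved abstract ODE lemmas.
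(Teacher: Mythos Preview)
Your proposal is correct and follows essentially the same approach as the paper: Picard--Lindel\"of for pointwise existence, integral/comparison arguments for nonnegativity and $L^\infty$ bounds to extend to $[0,T]$, then Lemma~\ref{lm:ODEgeneral} with $\gamma_1=\gamma_2=0$ for part~(a) and Lemma~\ref{lm:ODEgeneral-higher} for part~(c). The only cosmetic difference is in part~(b): the paper writes the linear ODE for $(\nabla c,\nabla p)$ explicitly and applies Lemma~\ref{lm:ODEgeneral} directly (so the $M$-dependence of the constant is immediate), whereas you invoke Lemma~\ref{lm:ODEgeneral-higher} with $m=1$ and trace the constants through its proof---both routes are valid.
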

\begin{proof}
a) Given $w\in C^0(\Ombar\times[0,T])$, for every $x\in\Ombar$ the existence and uniqueness of a solution $(c,p)(x,\cdot)\in C^0([0,\Tmax(x)])\cap C^1((0,\Tmax(x)))$ of \eqref{ode-cp}, with some $\Tmax(x)\in(0,T]$ such that $\limsup_{t\upto\Tmax(x)}(|c(x,t)|+|p(x,t)|)=\infty$ or $\Tmax(x)=T$, follows from Picard--Lindelöf's theorem. By an ODE comparison argument, nonnegativity of $c$ follows from that of $c_0$, and nonnegativity of $p$ from that of $p_0$, $w$ and $c$. Therefore, $0\le c(x,t)\le c_0(x)$ for all $x\in \Ombar$ and $t\in(0,\Tmax(x))$ due to the sign of $c_t=-pc$. Consequently, 
\allowdisplaybreaks[0]
 \begin{align*}
  p(x,t) &= \ure^{-t} p_0(x) + \int_0^t \ure^{-(t-s)}w(x,s)\ure^{αc(x,s)}c(x,s)\ds\\
  &\le \max\set{p_0(x), c_0(x)\ure^{αc_0(x)}\max_{s\in[0,T]}w(x,s)} \qquad \text{for all } x\in\Ombar, t\in[0,\Tmax(x)), 
 \end{align*}
\allowdisplaybreaks
 which also shows that $\Tmax(x)=T$ for all $x\in\Ombar$. The remainder of part a) follows from an application of Lemma~\ref{lm:ODEgeneral} with $γ_1=γ_2=0$.\\
 b) We apply Lemma~\ref{lm:ODEgeneral} to $v=(∇c,∇p)$, the solution of  
 \[
  v_t = \matr{-p I_{n \times n}&-cI_{n \times n}\\(αw\ure^{αc}c+w\ure^{αc})I_{n \times n} &-I_{n \times n}}v + \matr{0\\\ure^{αc}c∇w},
  \qquad v(0) = \begin{pmatrix}\nabla c_0 \\ \nabla p_0 \end{pmatrix} \in \R^{2n},
 \]
 noting that sufficient regularity is given by the assumption on $w$ and part a).\\
 c) follows from Lemma~\ref{lm:ODEgeneral-higher}.
\end{proof}

Both as an ingredient to the proof of Lemma~\ref{lm:local_ex} below and also for its own interest, we note that classical solutions of \eqref{eq:system-w} are unique.
\begin{lemma}\label{lm:uniqueness}
 Let $\Omega \subset \R^n$, $n \in \N$, a bounded domain and let $\alpha, \mu > 0$ as well as $\chi = \eps = 1$.
 Suppose $(u_1,c_1,p_1)$ and $(u_2,c_2,p_2)$ are two solutions of \eqref{eq:system} with the same initial data $(u_0,c_0,p_0)\in C^0(\Ombar)\times C^1(\Ombar)\times C^1(\Ombar)$
 and assume that they belong to 
 \[
  \kl{C^{2,1}(\Ombar\times(0,T))\cap C^1(\Ombar\times[0,T])}^3
 \]
 for some $T>0$. Then $(u_1,c_1,p_1)=(u_2,c_2,p_2)$ in $\Ombar\times[0,T]$.
\end{lemma}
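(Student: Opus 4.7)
The plan is to derive energy-type estimates for the differences $U \defs u_1 - u_2$, $C \defs c_1 - c_2$ and $P \defs p_1 - p_2$, which all vanish at $t = 0$, and then close by Gronwall's inequality. The regularity $(u_i, c_i, p_i) \in (C^1(\Ombar \times [0, T]))^3$ guarantees a uniform bound $M$ on $u_i$, $c_i$, $p_i$ and their spatial gradients on $\Ombar \times [0, T]$. Subtracting the respective equations, $U$ satisfies
\begin{align*}
U_t = \tfrac{1}{\alpha}\Delta U - \nabla\cdot(U\nabla c_1 + u_2\nabla C) + \mu(1 - u_1 - u_2) U \quad \text{in $\Omega \times (0, T)$},
\end{align*}
with boundary condition $\tfrac{1}{\alpha}\partial_\nu U = U\partial_\nu c_1 + u_2\partial_\nu C$, while $C$ and $P$ obey the pointwise linear ODEs $C_t = -p_1 C - c_2 P$ and $P_t = u_1 C + c_2 U - P$.

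First, I would test the $U$-equation with $U$: integration by parts on $\tfrac{1}{\alpha} U\Delta U$ and on the haptotactic divergence generate boundary integrals that coincide up to sign due to the boundary condition and therefore cancel. Young's inequality combined with the bound $M$ then yields
\begin{align*}
\tfrac{d}{dt}\|U\|_{L^2}^2 + \tfrac{1}{\alpha}\|\nabla U\|_{L^2}^2 \le K_1(\|U\|_{L^2}^2 + \|\nabla C\|_{L^2}^2).
\end{align*}
Multiplying the ODEs for $C$ and $P$ pointwise by $C$, $P$ and integrating directly produces $\tfrac{d}{dt}(\|C\|_{L^2}^2 + \|P\|_{L^2}^2) \le K_2(\|U\|_{L^2}^2 + \|C\|_{L^2}^2 + \|P\|_{L^2}^2)$. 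Next, the $C^1$-regularity in $x$ allows one to differentiate the two ODEs in space; testing the resulting equations for $\nabla C$, $\nabla P$ pointwise with $\nabla C$, $\nabla P$ and integrating leads to
\begin{align*}
\tfrac{d}{dt}(\|\nabla C\|_{L^2}^2 + \|\nabla P\|_{L^2}^2) \le K_3 F(t) + K_4\|\nabla U\|_{L^2}^2,
\end{align*}
with $F \defs \|U\|_{L^2}^2 + \|C\|_{L^2}^2 + \|P\|_{L^2}^2 + \|\nabla C\|_{L^2}^2 + \|\nabla P\|_{L^2}^2$. The dangerous $\|\nabla U\|_{L^2}^2$ on the right stems solely from the $c_2 \nabla U$-contribution to $\nabla P_t$.

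To close the loop I would take a weighted sum of the three estimates using a small $\delta \in (0, \tfrac{1}{\alpha K_4})$ in front of the gradient estimate, so that $\delta K_4\|\nabla U\|_{L^2}^2$ is absorbed by the parabolic dissipation $\tfrac{1}{\alpha}\|\nabla U\|_{L^2}^2$ available from the $U$-testing. Setting $E \defs \|U\|_{L^2}^2 + \|C\|_{L^2}^2 + \|P\|_{L^2}^2 + \delta(\|\nabla C\|_{L^2}^2 + \|\nabla P\|_{L^2}^2)$ and using $E(0) = 0$, Gronwall's inequality applied to $\tfrac{d}{dt}E \le KE$ forces $E \equiv 0$ on $[0, T]$, which gives the claim. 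I expect the main obstacle to be exactly the reappearance of $\|\nabla U\|_{L^2}^2$ in the $\nabla P$-estimate: because the $P$-equation carries no diffusion, there is no intrinsic gradient control for $P$, and the only dissipation in the system lives in the $U$-equation; the weighted-combination device is what resolves this mismatch, and it crucially relies on the boundary-term cancellation in the $U$-testing.
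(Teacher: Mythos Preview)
Your proposal is correct and follows essentially the same route as the paper: both build the energy $\|U\|_{L^2}^2 + \|C\|_{L^2}^2 + \|P\|_{L^2}^2 + \|\nabla C\|_{L^2}^2 + \|\nabla P\|_{L^2}^2$, absorb every occurrence of $\|\nabla U\|_{L^2}^2$ into the parabolic dissipation from the $U$-equation, and close by Gronwall. The only cosmetic difference is that the paper carries unit weights and hides the absorption in the choice of Young constants (producing coefficients like $(5+\alpha C)C$ and $(4+\alpha C)C-1$), whereas you make the mechanism explicit via the small prefactor $\delta$ on the gradient part; your observation about the boundary-term cancellation is exactly what the paper uses without comment.
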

\begin{proof}
 We let $C>0$ be such that 
 \[
  \max\set{\norm[\Lom\infty]{u_i(\cdot,t)},\norm[\Lom\infty]{∇u_i(\cdot,t)},\norm[\Lom\infty]{c_i(\cdot,t)},\norm[\Lom\infty]{p_i(\cdot,t)},\norm[\Lom\infty]{∇c_i(\cdot,t)},\norm[\Lom\infty]{∇p_i(\cdot,t)}}\le C
 \]
 for all $t\in[0,T]$ and $i\in\set{1,2}$. Then computing 
 \begin{align*}
 \f12& \ddt \kl{\io (u_1-u_2)^2 + \io (c_1-c_2)^2 + \io (p_1-p_2)^2 + \io |∇(c_1-c_2)|^2 + \io |∇(p_1-p_2)|^2} \\
 &= -\f1{α} \io |∇(u_1-u_2)|^2 +\io ∇(u_1-u_2)[(u_1-u_2)∇c_1+u_2∇(c_1-c_2)] + μ\io (u_1-u_2)^2 \\
 &\quad - μ\io (u_1-u_2)^2(u_1+u_2) - \io (c_1-c_2)(p_1-p_2)c_1 - \io (c_1-c_2)^2p_2 -\io (p_1-p_2)^2\\
 &\quad +\io (p_1-p_2)u_1(c_1-c_2)+\io (p_1-p_2)(u_1-u_2)c_2 -\io ∇(c_1-c_2)∇(p_1-p_2)c_1\\
 &\quad  - \io ∇(c_1-c_2)(p_1-p_2)∇c_1 - \io p_2|∇(c_1-c_2)|^2 - \io ∇(c_1-c_2)(c_1-c_2)∇p_2\\
 &\quad -\io |∇(p_1-p_2)|^2 + \io ∇(p_1-p_2)(c_1-c_2)∇u_1 + \io ∇(p_1-p_2)u_1∇(c_1-c_2)\\
 &\quad +\io ∇(p_1-p_2)∇(u_1-u_2)c_2 + \io ∇(p_1-p_2)(u_1-u_2)∇c_2 \\
 &\le ((2+αC)C+μ+2Cμ)\io (u_1-u_2)^2 +5C\io (c_1-c_2)^2 + (4C-1)\io (p_1-p_2)^2 \\
 &\quad  + (5+αC)C \io |∇(c_1-c_2)|^2+ ((4+αC)C-1)\io |∇(p_1-p_2)|^2 \qquad \text{in } (0,T), 
 \end{align*}
we obtain $(u_1,c_1,p_1)=(u_2,c_2,p_2)$ from Grönwall's inequality. 
\end{proof}

Making use of Schauder's fixed point theorem and applying Lemmata~\ref{lm:ODEgeneral}--\ref{lm:uniqueness}, we now obtain a local existence result for the system \eqref{eq:system-w}.
\begin{lemma}\label{lm:local_ex}
  Assume that
  \begin{align}\label{eq:main_ass}
    \Omega \text{ is a smooth bounded domain in $\R^n$, $n \in \{1, 2, 3\}$, }
    \alpha, \mu > 0
  \end{align}
  and that
  \begin{align}\label{eq:local_ex:init_cond}
    w_0, c_0, p_0 \in \con{2+\gamma}
    \quad \text{for some $\gamma \in (0, 1)$ are nonnegative and fulfill} \quad
    \partial_\nu w_0 = 0 \text{ on $\partial \Omega$}.
  \end{align}
 Then \eqref{eq:system-w} has a nonnegative unique solution 
 \[
  (w,c,p)\in \kl{C^{2+\gamma,1+\frac{\gamma}{2}}(\Ombar\times(0,\Tmax))\cap C^1(\Ombar\times[0,\Tmax))}^3
 \]
 for some $\Tmax>0$, which can be chosen such that 
 \begin{equation}\label{eq:extensibility1}
 \Tmax=\infty \quad \text{ or }\quad \limsup_{t\upto \Tmax} \norm[C^{1+γ}(\Ombar)]{w(\cdot,t)} =\infty
 \end{equation}
\end{lemma}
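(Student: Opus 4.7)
The plan is to apply Schauder's fixed-point theorem. For small $T>0$ and $R>\|w_0\|_{C^{1+\gamma}}$ to be fixed later, set
\[
  X:=\set{w\in C^{1+\gamma,\gamma/2}(\Ombar\times[0,T]) : w\ge 0,\ w(\cdot,0)=w_0,\ \|w\|_{C^{1+\gamma,\gamma/2}}\le R}.
\]
Given $w\in X$, Lemma~\ref{lm:ODE}(a,c) produces a unique nonnegative solution $(c,p)\in(C^{1+\gamma,1+\gamma/2}(\Ombar\times[0,T]))^2$ of the ODE part of \eqref{eq:system-w} with norms controlled by $R$ and the initial data. Since then $\nabla c$ and the coefficient $\alpha pc+\mu-\mu\ure^{\alpha c}w$ both lie in $C^{\gamma,\gamma/2}$, classical parabolic Schauder theory applied to the linear problem
\[
  \hat w_t=\tfrac1\alpha\Delta\hat w+\nabla c\cdot\nabla\hat w+(\alpha pc+\mu-\mu\ure^{\alpha c}w)\hat w,\quad \partial_\nu\hat w=0,\quad \hat w(\cdot,0)=w_0
\]
yields a unique $\hat w\in C^{2+\gamma,1+\gamma/2}(\Ombar\times[0,T])$; the compatibility condition $\partial_\nu w_0=0$ in \eqref{eq:local_ex:init_cond} is precisely what is needed for regularity up to the parabolic corner at $t=0$. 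Setting $F(w):=\hat w$, any fixed point of $F$ is a classical solution of \eqref{eq:system-w} with the required regularity.

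The map $F\colon X\to C^{1+\gamma,\gamma/2}$ is compact by the compact embedding $C^{2+\gamma,1+\gamma/2}\embed C^{1+\gamma,\gamma/2}$ and continuous by standard continuous dependence for linear parabolic problems together with Lemma~\ref{lm:ODE}(b). The decisive point is checking $F(X)\subset X$: Schauder estimates give $\|\hat w\|_{C^{2+\gamma,1+\gamma/2}}\le K$ with $K=K(R,\text{data})$ uniform in $T\in(0,1]$, and a parabolic interpolation argument (using $\hat w(\cdot,0)=w_0$ together with control on the derivatives of $\hat w$) yields $\|\hat w(\cdot,t)-w_0\|_{C^{1+\gamma}(\Ombar)}\le CKT^{\gamma/2}$ uniformly in $t\in[0,T]$ along with a $T^{1-\gamma/2}$ bound for the $\gamma/2$-time Hölder seminorm of $\hat w$ and $\nabla\hat w$. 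Hence $\|\hat w\|_{C^{1+\gamma,\gamma/2}}\le\|w_0\|_{C^{1+\gamma}}+o_{T\to 0}(1)\le R$ once $T$ is sufficiently small. Nonnegativity of $\hat w$ is enforced by the parabolic comparison principle (the zeroth-order coefficient being in $L^\infty$), while $c,p\ge 0$ is built into Lemma~\ref{lm:ODE}. Schauder's theorem thereby delivers a fixed point, i.e.\ a local classical solution.

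Uniqueness comes from Lemma~\ref{lm:uniqueness} after undoing the transformation $u=w\ure^{\alpha c}$. A standard extension argument then yields $\Tmax\in(0,\infty]$; were $\Tmax<\infty$ and $\|w(\cdot,t)\|_{C^{1+\gamma}}$ to remain bounded as $t\upto\Tmax$, Lemma~\ref{lm:ODE}(b) would supply uniform $C^1$-bounds on $(c,p)$ near $\Tmax$, allowing us to restart the fixed-point argument at some $t_0<\Tmax$ close enough to $\Tmax$ and thus extend $(w,c,p)$ past $\Tmax$, contradicting maximality. I expect the main obstacle to be the bookkeeping in the self-mapping step: one must track how the $C^{\gamma,\gamma/2}$-norms of the coefficients $\nabla c$, $p$ and $\ure^{\alpha c}$ depend on $R$, absorb the quadratic nonlinearity $-\mu\ure^{\alpha c}w^2$ by treating it as a linear term with coefficient from $X$, and then invoke parabolic interpolation carefully so that the Schauder bound collapses to something dominated by $\|w_0\|_{C^{1+\gamma}}$ up to an $o(1)$ term as $T\to 0$.
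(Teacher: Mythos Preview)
Your overall plan coincides with the paper's: Schauder's fixed point theorem, where one first solves the ODE subsystem for $(c,p)$ via Lemma~\ref{lm:ODE} and then a linear parabolic problem for the new $w$, with self-mapping obtained by making $T$ small. The difference lies in the choice of ambient space. The paper sets up the fixed point in the weaker space $S_{M,T}\subset C^0([0,T];C^1(\Ombar))$ and uses Lieberman's H\"older gradient estimates (not full Schauder theory) to show that $\Phi(S_{M,T})$ is bounded in $C^{1+\gamma,\frac{1+\gamma}{2}}$; this needs only $L^\infty$ control on $\nabla c$ and the zeroth-order coefficient and $\|w_0\|_{C^{1+\gamma}}$. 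Regularity is then bootstrapped to $C^{2+\gamma,1+\frac\gamma2}$ after the fixed point has been found. Your approach instead takes $X\subset C^{1+\gamma,\gamma/2}$ and applies classical Schauder theory directly, which gives the final regularity in one stroke and makes compactness obvious.

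The price of your choice shows up in the extension argument, and this is where there is a gap. Your Schauder bound $K=K(R,\text{data})$ depends on $\|w_0\|_{C^{2+\gamma}(\Ombar)}$ (and on $\|c_0\|_{C^{2+\gamma}}$, $\|p_0\|_{C^{2+\gamma}}$ through the coefficient norms coming out of Lemma~\ref{lm:ODE}(c)), so the existence time $T$ you obtain depends on these $C^{2+\gamma}$ norms, not merely on $\|w_0\|_{C^{1+\gamma}}$ and $\|c_0\|_{C^1},\|p_0\|_{C^1}$. Hence ``$\|w(\cdot,t)\|_{C^{1+\gamma}}$ bounded $\Rightarrow$ can restart and extend past $\Tmax$'' does not follow from your construction as written: you would first need to show that a uniform $C^{1+\gamma}$ bound on $w$ on $[0,\Tmax)$ upgrades to a uniform $C^{2+\gamma}$ bound on $(w,c,p)$. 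This is indeed true (Lemma~\ref{lm:ODE}(c) gives $\nabla c\in C^{\gamma,\gamma/2}$ with controlled norm, then Schauder for $w$ gives $C^{2+\gamma,1+\gamma/2}$, and Lemma~\ref{lm:ODE}(c) again upgrades $c,p$), but it must be stated. The paper avoids this entire detour because its constants $M$ and $T$, by construction, depend only on the quantities in \eqref{eq:chooseM}, so the extensibility criterion \eqref{eq:extensibility1} is immediate once one notes (via Lemma~\ref{lm:ODE}(b)) that the $C^1$ norms of $c,p$ are controlled by $\|w\|_{C^1}$.
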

\begin{proof}
 For $T>0$ and $M>0$ we introduce the set 
 \[
  S_{M,T} = \set{w\in C^0([0,T];C^1(\Ombar)) \mid 0\le w, \sup_{t\in[0,T]}\norm[C^1(\Ombar)]{w(\cdot,t)}\le M }
 \]
 and given $w\in S_{M,T}$ we let $(c,p)$ be the solution of \eqref{ode-cp} (cf.\ Lemma~\ref{lm:ODE} a)). We then let $v$ be the unique (weak) solution (see \cite[Theorem~III.5.1]{LSU}, \cite[Theorem~6.39]{lieberman_book}) of 
 \begin{equation}\label{def:w-locex}
  v_t = \f1{α} Δv + f\cdot ∇v + g v \text{ in } \Omega\times(0,T), \qquad ∂_{ν}v=0 \text{ on } ∂\Omega\times(0,T), \qquad v(\cdot,0)=w_0 \text{ in } \Omega, 
 \end{equation}
where $f=∇c$ and $g=αpc+μ-μ\ure^{αc}w$ belong to $L^\infty(\Om\times(0,T))$ according to Lemma~\ref{lm:ODE} a) and b), and define $Φ(w)=v$ in $\Ombar\times[0,T]$.

We choose $M>0$ such that
\begin{equation}\label{eq:chooseM}
 M>\norm[C^1(\Ombar)]{c_0}, \quad M>\norm[C^1(\Ombar)]{p_0}, \quad M>\norm[C^{1+γ}(\Ombar)]{w_0}, \qquad M>\norm[C^1(\Ombar)]{w_0}+1
\end{equation}
and introduce the constants $c_1=c_1(M)$ from  Lemma~\ref{lm:ODE} b) (for $T=1$), $c_2>0$ from \cite[Theorem~6.40]{lieberman_book} such that all solutions $v$ of \eqref{def:w-locex} with $\norm[L^\infty(\Om\times(0,T))]{f}\le c_1$, $\norm[L^\infty(\Om\times(0,T))]{g}\le αc_1^2+μ+μ\ure^{αc_1}M$ and $\norm[\Lom\infty]{w_0}\le M$ satisfy $\norm[L^\infty(\Om\times(0,T))]{v}\le c_2$, and $c_3>0$ such that by \cite[Theorems~1.1 and 1.2]{lieberman_gradient}, all solutions $v$ of \eqref{def:w-locex} with 
$∂_{ν}w_0=0$ on $∂\Omega$, 
$\norm[C^{1+γ}(\Ombar)]{w_0}\le M$, $\norm[L^\infty(\Om\times(0,T))]{f}\le c_1$, $\norm[L^\infty(\Om\times(0,T))]{g}\le αc_1^2+μ\ure^{αc_1}+μ\ure^{2αc_1}M$ 
and $\norm[L^\infty(\Om\times(0,T))]{v}\le c_2$ also fulfil 
$\nnorm{C^{1+γ,\f{1+γ}2}(\Ombar\times[0,T])}{v}\le c_3$. 
Finally, we fix $T\in(0,1]$ such that $c_3 \sqrt{T}\le 1$. 

Successive applications of Lemma~\ref{lm:ODE}, \cite[Theorem 6.40]{lieberman_book} and \cite[Theorems~1.1 and 1.2]{lieberman_gradient} then ensure that 
\begin{equation}\label{eq:PhiHolder}
 \nnorm{C^{1+γ,\f{γ}2}(\Ombar\times[0,T])}{Φ(w)} \le c_3.
\end{equation}

In particular, 
\[
 \norm[C^1(\Ombar)]{Φ(w)(\cdot,t)} \le \norm[C^1(\Ombar)]{Φ(w)(\cdot, 0)} + \norm[C^1(\Ombar)]{Φ(w)(\cdot, t)-Φ(w)(\cdot, 0)} \le \norm[C^1(\Ombar)]{w_0} + c_3 t^{\f12} \le M
\]
for every $t\in[0,T]$. Apparently, $\Phi$ maps $S_{M,T}$ to itself and, according to \eqref{eq:PhiHolder} has a compact image. Schauder's fixed point theorem provides a fixed point $w$ of $\Phi$, that is, (together with $c$ and $p$) a solution of \eqref{eq:system-w} in $\Omega\times[0,T]$. 

As $w\in C^{1+γ,0}(\Ombar\times[0,\Tmax))$, $p,c,∇c$ belong to $C^{γ,\f{γ}2}(\Ombar\times[0,\Tmax))$ by Lemma~\ref{lm:ODE} c) with $γ_2=0$.
Since moreover $w \in C^{\gamma, \frac{\gamma}{2}}(\Ombar \times [0, \tmax))$, also $f$ and $g$ in \eqref{def:w-locex} belong to this space.
Then \cite[Theorem~IV.5.3]{LSU} (if combined with the uniqueness statement of \cite[Theorem~III.5.1]{LSU} and $C^{2+γ}$ regularity of $w_0$) makes $w$ a classical solution with $w_t, D_x^2 w \in C^{\gamma, \frac{\gamma}{2}}(\Ombar \times [0, \tmax))$.
An invocation of Lemma~\ref{lm:ODE}~c) yields $D_x^2 c, D_x^2 p, \nabla p \in C^{\gamma, 1+ \frac{\gamma}{2}}(\Ombar \times [0, \tmax))$.
Another application of \cite[Theorem~IV.5.3]{LSU} to $\zeta w$ for a temporal cuttoff function $\zeta$ ensures that $\nabla w_t \in C^{\gamma, \frac{\gamma}{2}}(\Ombar \times (0, \tmax))$.
Moreover, nonnegativity of $w$ as well as of $c$ and $p$ follows from the comparison principles for parabolic and ordinary differential equations, respectively.

Since $M$ and $T$ only depend on the quantities in \eqref{eq:chooseM} and as solutions are unique by Lemma~\ref{lm:uniqueness}; the above reasoning can therefore be applied to extend the solution until some maximal existence time characterized by 
 \begin{equation}\label{eq:extensibility2}
 \Tmax=\infty \quad \text{ or } \quad \limsup_{t\upto \Tmax} \kl{\norm[C^1(\Ombar)]{c(\cdot,t)}+\norm[C^1(\Ombar)]{p(\cdot,t)}+\norm[C^{1+γ}(\Ombar)]{w(\cdot,t)}} =\infty.
 \end{equation}
According to Lemma~\ref{lm:ODE} b), boundedness of the norm of $w$ in this expression already implies that of the norms of $c$ and $p$, therefore \eqref{eq:extensibility2} can be reduced to \eqref{eq:extensibility1}. 

Finally, uniqueness of this solution has already been asserted in Lemma~\ref{lm:uniqueness}.
\end{proof}

By fixing initial data as in \eqref{eq:local_ex:init_cond},
we henceforth implicitly also fix the unique classical solution $(w, c, p)$ of \eqref{eq:system-w} given by Lemma~\ref{lm:local_ex} and denote its maximal existence time by $\tmax$.

\subsection{\tops{$L^\infty$}{L infty} bounds}
The results in the previous subsection show that Theorem~\ref{th:global_ex} follows once we have shown that for the solutions constructed in Lemma~\ref{lm:local_ex} the second alternative in \eqref{eq:extensibility1} holds.
That is, we need to derive sufficiently strong a~priori estimates.
In this subsection, we begin with bounds in $L^\infty$.

For the second solution component, such a bound directly follows from the comparison principle.
\begin{lemma}\label{lm:c_linfty}
  Suppose \eqref{eq:main_ass} and that $(w_0, c_0, p_0)$ satisfies \eqref{eq:local_ex:init_cond}.
  Then the solution $(w, c, p)$ constructed in Lemma~\ref{lm:local_ex} fulfills
  \begin{align*}
    \|c(\cdot, t)\|_{\leb\infty} \le \|c_0\|_{\leb\infty}
    \qquad \text{for all $t \in (0, \tmax)$}.
  \end{align*}
\end{lemma}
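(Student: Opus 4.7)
The plan is to exploit the fact that the second equation of \eqref{eq:system-w}, namely $c_t = -pc$, is a pointwise-in-$x$ linear scalar ODE in the time variable. For every fixed $x \in \Ombar$ this ODE can be solved explicitly, yielding the representation
\begin{align*}
  c(x, t) = c_0(x) \exp\Bigl( - \int_0^t p(x, s) \ds \Bigr), \qquad x \in \Ombar,\ t \in [0, \tmax).
\end{align*}

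Lemma~\ref{lm:local_ex} already asserts that the solution $(w, c, p)$ is nonnegative throughout $\Ombar \times [0, \tmax)$. In particular $p \ge 0$, so the exponential factor above lies in $(0, 1]$, and combined with $c_0 \ge 0$ this gives
\begin{align*}
  0 \le c(x, t) \le c_0(x) \le \|c_0\|_{\leb\infty} \qquad \text{for all } x \in \Ombar,\ t \in [0, \tmax).
\end{align*}
Taking the essential supremum over $x$ then produces the claimed inequality. An equivalent formulation avoiding the integrating factor would simply observe that $c_t = -pc \le 0$ on $\set{c \ge 0}$, so that by the scalar ODE comparison principle $c(x, \cdot)$ is nonincreasing on $[0, \tmax)$ for each $x$.

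No real obstacle arises: the bound is an immediate consequence of the sign information on $p$ already provided by Lemma~\ref{lm:local_ex} and the fact that the $c$-equation carries no spatial coupling. The coupling of $p$ back to $c$ through the third equation is irrelevant here, as only the nonnegativity (not any finer quantitative estimate) of $p$ is used.
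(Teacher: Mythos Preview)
Your proof is correct and follows essentially the same approach as the paper: both rely on nonnegativity of $p$ from Lemma~\ref{lm:local_ex} and the pointwise ODE structure of $c_t = -pc$. The paper phrases this via the comparison principle with the constant supersolution $\ol c = \|c_0\|_{\leb\infty}$, while you make the integrating-factor representation explicit, but the content is the same.
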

\begin{proof}
  The function $\ol c \defs \|c_0\|_{\leb\infty}$ is a supersolution of the second equation in \eqref{eq:system-w} and $c \ge 0$ by Lemma~\ref{lm:local_ex}.
\end{proof}

As a preparation for obtaining $L^\infty$ estimates also for the other two solution components,
we note that the time regularization in the third equation in \eqref{eq:system-w} implies that we can bound $p$ by a quantity including an \emph{arbitrarily small} contribution of the $L^\infty$ norm of $w$ --
at least if we are willing to shrink the time interval on which this estimates holds accordingly.
\begin{lemma}\label{lm:p_linfty_le_w_linfty}
  Suppose \eqref{eq:main_ass} and let $M > 0$. Then there exists $T^\star > 0$ such that for all $(w_0, c_0, p_0)$ satisfying \eqref{eq:local_ex:init_cond} and
  \begin{align*}
    \|c_0\|_{\leb\infty} \le M,
  \end{align*}
  the solution $(w, c, p)$ of \eqref{eq:system-w} fulfills
  \begin{align*}
    \|p\|_{L^\infty(\Omega \times (0, T))} \le \|p_0\|_{\leb\infty} + \min\left\{\frac{\mu}{2M\alpha}, 1\right\} \|w\|_{L^\infty(\Omega \times (0, T))}
    \qquad \text{for all $T \in (0, T^\star] \cap (0, \tmax)$}.
  \end{align*}
\end{lemma}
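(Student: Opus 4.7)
The plan is to exploit the fact that the third equation in \eqref{eq:system-w}, namely $p_t = w\ure^{\alpha c}c - p$, is a linear ODE in time (pointwise in $x$) whose Duhamel representation immediately gives a bound in terms of $\|w\|_{L^\infty}$ with a prefactor that vanishes as $t\sea 0$. The only inputs needed are the pointwise bound on $c$ from Lemma~\ref{lm:c_linfty} and the hypothesis $\|c_0\|_{\leb\infty}\le M$.

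First, I would fix $(w_0,c_0,p_0)$ and the corresponding solution $(w,c,p)$, and observe that for each $x\in\Ombar$ the variation of constants formula applied to the third equation yields
\begin{equation*}
  p(x,t) = \ure^{-t} p_0(x) + \int_0^t \ure^{-(t-s)} w(x,s)\ure^{\alpha c(x,s)} c(x,s) \ds, \qquad t\in[0,\tmax).
\end{equation*}
By Lemma~\ref{lm:local_ex} we have $p_0\ge 0$, $w\ge 0$, $c\ge 0$, so $p\ge 0$ and no absolute values are needed. Lemma~\ref{lm:c_linfty} together with $\|c_0\|_{\leb\infty}\le M$ gives $0\le c(x,s)\le M$, hence $\ure^{\alpha c(x,s)}c(x,s) \le M\ure^{\alpha M}$ for all $(x,s)\in\Ombar\times[0,\tmax)$. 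Plugging this into the integral and using $\int_0^t \ure^{-(t-s)}\ds = 1-\ure^{-t}$ yields
\begin{equation*}
  p(x,t) \le \|p_0\|_{\leb\infty} + M\ure^{\alpha M}\,(1-\ure^{-t})\,\|w\|_{L^\infty(\Omega\times(0,T))}
\end{equation*}
for all $(x,t)\in\Ombar\times[0,T]$ and every $T\in(0,\tmax)$.

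Second, I would choose $T^\star>0$ depending only on $M$, $\mu$, $\alpha$ (but not on the particular initial data) so that
\begin{equation*}
  M\ure^{\alpha M}\,(1-\ure^{-T^\star}) \le \min\!\left\{\frac{\mu}{2M\alpha},\,1\right\},
\end{equation*}
which is possible because the left-hand side tends to $0$ as $T^\star\sea 0$; explicitly one may take $T^\star = -\log\!\bigl(1-\tfrac{1}{M\ure^{\alpha M}}\min\{\tfrac{\mu}{2M\alpha},1\}\bigr)$. Since $t\mapsto 1-\ure^{-t}$ is increasing, the bound above then holds with the prefactor replaced by $\min\{\mu/(2M\alpha),1\}$ for every $T\in(0,T^\star]\cap(0,\tmax)$, which is exactly the claim.

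There is no real obstacle here beyond unpacking the Duhamel formula; the key structural point is that the $L^\infty$-bound on $c$ from Lemma~\ref{lm:c_linfty} is independent of $w$ and available on the whole existence interval, so the trade-off between the interval length and the smallness of the coefficient of $\|w\|_{L^\infty}$ is governed purely by $1-\ure^{-T^\star}$.
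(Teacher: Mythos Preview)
Your proof is correct and follows essentially the same approach as the paper: both apply the variation-of-constants formula to the third equation, bound $c\ure^{\alpha c}$ by $M\ure^{\alpha M}$ via Lemma~\ref{lm:c_linfty}, and then choose $T^\star$ so that $M\ure^{\alpha M}(1-\ure^{-T^\star})\le \min\{\mu/(2M\alpha),1\}$. One minor caveat: your explicit formula for $T^\star$ is only well defined when $\min\{\mu/(2M\alpha),1\}<M\ure^{\alpha M}$; in the opposite case any $T^\star>0$ works, so the existence claim is unaffected.
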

\begin{proof}
  We choose $T^\star > 0$ so small that $M \ure^{\alpha M} \left(1 - \ure^{-T^\star}\right) \le \min\{\frac{\mu}{2M\alpha}, 1\}$
  and fix $T \in (0, T^\star] \cap (0, \tmax)$.
  By the variation-of-constants formula and Lemma~\ref{lm:c_linfty}, we have 
  \begin{align*}
          \|p(\cdot, t)\|_{\leb\infty}
    &\le  \ure^{-t} \|p_0\|_{\leb\infty} + \int_0^t \ure^{-(t-s)} \|w \ure^{\alpha c} c\|_{\leb\infty}(\cdot, s) \ds \\
    &\le  \|p_0\|_{\leb\infty} + \|c_0\|_{\leb\infty} \ure^{\alpha \|c_0\|_{\leb\infty}} \|w\|_{L^\infty(\Omega \times (0, T))} \int_0^{T^\star} \ure^{-s} \ds \\
    &\le  \|p_0\|_{\leb\infty} + M \ure^{\alpha M} \left(1 - \ure^{-T^\star}\right) \|w\|_{L^\infty(\Omega \times (0, T))}
    \qquad \text{for all $t \in (0, T)$},
  \end{align*}
  which implies the statement due to the definition of $T^\star$.
\end{proof}

We now turn our attention to $L^\infty$ estimates of $w$.
The fact that the transformed quantity $w$ fulfills an equation whose first- and second-order terms reduce to $\ure^{-\alpha c} \nabla \cdot (\frac1\alpha \ure^{\alpha c} \nabla w)$
(cf.~\eqref{eq:transformation:12_order_terms}), an expression without any explicit $\nabla c$, opens the door for certain testing procedures.
In related works, these have been used to first derive boundedness in $L^p$ and then, after an iteration argument, also in $L^\infty$
(see for instance \cite[Proposition~5.1]{ref13}, \cite[Lemma~3.5]{tao2007global} and \cite[Lemma~3.10]{tao_win_haptotaxis_energy}).
However, here we are able to employ a slightly faster method:
Another advantage of the transformation $w = \ure^{-\alpha c} u$ is that sufficiently large constant functions are supersolutions of the equation for $w$ in \eqref{eq:system-w},
at least as long both $c$ and $p$ are bounded.
Therefore, the previous two lemmata allow us to prove boundedness for $w$ on small timescales.

We also emphasize that the following proof crucially relies on the presence of a logistic source in the first equation, that is, on positivity of $\mu$.
(The same would be true for testing procedures similar to those performed in the works referenced above.)
In fact, this is essentially the only place where we directly make use of the assumption $\mu > 0$.
\begin{lemma}\label{lm:w_linfty_t0}
  Suppose \eqref{eq:main_ass} and let $M > 0$.
  Let $T^\star > 0$ be as given by Lemma~\ref{lm:p_linfty_le_w_linfty}.
  Then there is $K > 0$ with the following property:
  For all $L > 0$ and all $(w_0, c_0, p_0)$ satisfying \eqref{eq:local_ex:init_cond} and
  \begin{align*}
    \|w_0\|_{\leb\infty} \le L, \quad
    \|c_0\|_{\leb\infty} \le M 
    \quad \text{as well as} \quad
    \|p_0\|_{\leb\infty} \le L,
  \end{align*}
  the solution $(w, c, p)$ of \eqref{eq:system-w} fulfills
  \begin{align*}
    \|w(\cdot, t)\|_{\leb\infty} + \|p(\cdot, t)\|_{\leb\infty} \le K (L+1)
    \qquad \text{for all $t \in (0, T^\star] \cap (0, \tmax)$}.
  \end{align*}
\end{lemma}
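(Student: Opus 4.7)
The plan is to exploit the observation made just above the statement: sufficiently large constants are supersolutions of the equation for $w$ in \eqref{eq:system-w} as long as $c$ and $p$ are under control. Since Lemma~\ref{lm:c_linfty} yields $c\le M$ and Lemma~\ref{lm:p_linfty_le_w_linfty} bounds $\|p\|_{\leb\infty}$ by $\|p_0\|_{\leb\infty}$ plus a small multiple of $\|w\|_{L^\infty(\Omega\times(0,T))}$, I intend to identify a threshold of the form $K=K_1(L+1)$, with $K_1$ depending only on $\alpha,\mu,M$, above which the constant $\bar w\equiv K$ is a supersolution; a bootstrap on the first time at which $\|w\|_\infty$ could exceed this threshold then closes the argument.

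Inserting $\bar w\equiv K$ into the first equation of \eqref{eq:system-w} and using $\ure^{\alpha c}\ge 1$ together with $c\le M$, the supersolution inequality reduces to $\mu K\ge \alpha M\|p(\cdot,t)\|_{\leb\infty}+\mu$ on $(0,T)$. Plugging in $\|p(\cdot,t)\|_{\leb\infty}\le L+\tfrac{\mu}{2M\alpha}\|w\|_{L^\infty(\Omega\times(0,T))}$ from Lemma~\ref{lm:p_linfty_le_w_linfty} converts this into
\[
 K\ge 1+\tfrac{\alpha M L}{\mu}+\tfrac{1}{2}\|w\|_{L^\infty(\Omega\times(0,T))},
\]
which, under the a~priori hypothesis $\|w\|_{L^\infty(\Omega\times(0,T))}\le K$, is satisfied once $K\ge 2+\tfrac{2\alpha ML}{\mu}$. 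I therefore set $K\defs K_1(L+1)$ with $K_1\defs 3+\tfrac{2\alpha M}{\mu}$, so that the inequality in fact remains valid even when $\|w\|_{L^\infty(\Omega\times(0,T))}\le K+1$; this uniform unit of slack is the technical heart of the argument.

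Now let $T^{**}\defs\sup\set{T\in[0,T^\star\wedge\tmax) \mid w\le K\text{ on }\Ombar\times[0,T]}$. By continuity of $w$ and $\|w_0\|_{\leb\infty}\le L<K$ one has $T^{**}>0$. On $(0,T^{**})$, $\bar w\equiv K$ is a supersolution by the previous step, so multiplying the PDE for $w-K$ by $(w-K)_+$, integrating over $\Omega$, using $\partial_\nu(w-K)_+=0$, absorbing the $\nabla c$-term via Young's inequality (with the $L^\infty$-bound on $\nabla c$ from Lemma~\ref{lm:ODE} b)) and applying Grönwall's lemma yields $w\le K$ on $\Ombar\times(0,T^{**}]$. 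If $T^{**}<T^\star\wedge\tmax$, continuity of $w$ gives $\|w\|_{L^\infty(\Omega\times(0,T^{**}+\delta))}\le K+1$ for all sufficiently small $\delta>0$; the slack built into $K$ ensures the supersolution inequality still holds on this enlarged interval, so the same comparison argument forces $w\le K$ there, contradicting the definition of $T^{**}$. Hence $T^{**}=T^\star\wedge\tmax$, and a final invocation of Lemma~\ref{lm:p_linfty_le_w_linfty} gives $\|p\|_{\leb\infty}\le L+K$, whence $\|w\|_{\leb\infty}+\|p\|_{\leb\infty}\le 2K+L\le (2K_1+1)(L+1)$.

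The main obstacle is the bootstrap step: picking $K$ right at the minimal threshold $2+\tfrac{2\alpha ML}{\mu}$ would leave no room to push past a hypothetical critical time $T^{**}$, because the comparison argument alone delivers only $w\le K$ and not $w<K$. Enlarging $K$ by a uniform amount (as above) is the cleanest way to circumvent this; alternatively, one could combine the strong maximum principle with Hopf's lemma to upgrade $w\le K$ to $w<K$ strictly and then extend by continuity.
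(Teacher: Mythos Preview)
Your argument is correct, but it takes a noticeably more laborious route than the paper. The paper avoids the continuation argument entirely by letting the constant supersolution depend on the (a~priori finite) quantity $\|w\|_{L^\infty(\Omega\times(0,T))}$ itself: setting
\[
  \bar w \defs \max\Bigl\{L,\ \tfrac{LM\alpha}{\mu}+1+\tfrac12\|w\|_{L^\infty(\Omega\times(0,T))}\Bigr\},
\]
a single application of the classical comparison principle yields $\|w\|_{L^\infty(\Omega\times(0,T))}\le \bar w\le \max\{L,\,LM\alpha\mu^{-1}+1\}+\tfrac12\|w\|_{L^\infty(\Omega\times(0,T))}$, which can be immediately solved for $\|w\|_{L^\infty(\Omega\times(0,T))}$. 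This self-referential choice of $\bar w$ replaces your entire bootstrap-with-slack machinery and the energy estimate for $(w-K)_+$; in particular, the paper never needs any bound on $\nabla c$ (your appeal to Lemma~\ref{lm:ODE}~b) is in any case misplaced, since that lemma requires a $C^1$ bound on $w$---what you actually use is merely that $\nabla c$ is continuous on compact subsets of $\Ombar\times[0,\tmax)$, which is immediate from the regularity of the classical solution). Your approach does buy a little extra: it would work even if one only had weak solutions with $(w-K)_+\in L^2(0,T;H^1)$, whereas the paper leans on the strong maximum principle for classical solutions.
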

\begin{proof}
  We fix $T \in (0, T^\star] \cap (0, \tmax)$.
  By Lemma~\ref{lm:c_linfty} and Lemma~\ref{lm:p_linfty_le_w_linfty}, we may estimate 
  \begin{align*}
          w_t - \frac{1}{\alpha} \Delta w - \nabla c \cdot \nabla w
    &=    w \left( \alpha pc + \mu - \mu\ure^{\alpha c} w \right)
     \le  w \left( M\alpha \|p\|_{L^\infty(\Omega \times (0, T))} + \mu - \mu w \right) \\
    &\le  w \left( LM\alpha + \frac{\mu}{2} \|w\|_{L^\infty(\Omega \times (0, T))} + \mu - \mu w \right)
  \end{align*}
  in $\Omega \times (0, T)$.
  Therefore, the comparison principle, applied with the constant supersolution
  \begin{align*}
    \ol w \defs \max\left\{L, \frac{LM\alpha}{\mu} + 1 + \frac12 \|w\|_{L^\infty(\Omega \times (0, T))} \right\},
  \end{align*}
  asserts 
  \begin{align*}
          \|w\|_{L^\infty(\Omega \times (0, T))}
    &\le  \ol w
    \le   \max\left\{L, LM\alpha \mu^{-1} + 1\right\} + \frac{1}{2} \|w\|_{L^\infty(\Omega \times (0, T))}
  \end{align*}
  and thus
  \begin{align*}
          \|w\|_{L^\infty(\Omega \times (0, T))}
    \le   2 \max\left\{L, LM\alpha \mu^{-1} + 1\right\}.
  \end{align*}
  Since $\|p\|_{L^\infty(\Omega \times (0, T))} \le L + \|w\|_{L^\infty(\Omega \times (0, T))}$ by Lemma~\ref{lm:p_linfty_le_w_linfty},
  this implies the statement for $K \defs 5\max\{1, M\alpha \mu^{-1}\}$.
\end{proof}

Next, iteratively applying Lemma~\ref{lm:w_linfty_t0} allows us to derive boundedness for all solution components on all bounded time intervals.
We note that a prerequisite for such an iteration procedure is that the time $T^\star$ given by Lemma~\ref{lm:p_linfty_le_w_linfty} (and to a lesser extent also the constant $K$ given by Lemma~\ref{lm:w_linfty_t0})
only depends on the data in a manageable way. This justifies why we have kept track of the dependencies of the constants in the previous lemmata.
\begin{lemma}\label{lm:w_linfty_tmax}
  Suppose \eqref{eq:main_ass} and let $M > 0$. Then there are $C_1, C_2 > 0$ with the following property:
  For all $L > 0$ and all $(w_0, c_0, p_0)$ satisfying \eqref{eq:local_ex:init_cond} and
  \begin{align*}
    \|w_0\|_{\leb\infty} \le L, \quad
    \|c_0\|_{\leb\infty} \le M 
    \quad \text{as well as} \quad
    \|p_0\|_{\leb\infty} \le L,
  \end{align*}
  the solution $(w, c, p)$ of \eqref{eq:system-w} fulfills
  \begin{align}\label{eq:w_linfty_tmax:statement}
    \|p(\cdot, t)\|_{\leb\infty} + \|w(\cdot, t)\|_{\leb\infty} \le \ure^{C_1 t} C_2 (L+1)
    \qquad \text{for all $t \in (0, \tmax)$}.
  \end{align}
\end{lemma}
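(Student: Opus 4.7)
The plan is to iterate Lemma~\ref{lm:w_linfty_t0} in steps of length $T^\star$. The crucial point, which makes the iteration possible, is that the constants $T^\star$ (from Lemma~\ref{lm:p_linfty_le_w_linfty}) and $K$ (from Lemma~\ref{lm:w_linfty_t0}) depend only on $M$ and not on $L$, and that $\|c(\cdot, t)\|_{\leb\infty} \le \|c_0\|_{\leb\infty} \le M$ for all $t \in (0, \tmax)$ by Lemma~\ref{lm:c_linfty}. Hence the same $T^\star$ and $K$ remain valid when we restart the problem from any later time $t_0 \in (0, \tmax)$: by Lemma~\ref{lm:local_ex}, $(w(\cdot, t_0), c(\cdot, t_0), p(\cdot, t_0)) \in (\con{2+\gamma})^3$ and the compatibility condition $\partial_\nu w(\cdot, t_0) = 0$ holds, so the translated triple $(x, s) \mapsto (w, c, p)(x, t_0 + s)$ is again the unique classical solution of \eqref{eq:system-w} on $\Ombar \times [0, \tmax - t_0)$ with new initial data still bounded by $M$ in the $c$-component.

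For $k \in \N_0$ with $kT^\star < \tmax$, define
\begin{align*}
  B_k \defs \sup_{t \in [0, kT^\star] \cap [0, \tmax)} \big( \|w(\cdot, t)\|_{\leb\infty} + \|p(\cdot, t)\|_{\leb\infty} \big),
\end{align*}
so that $B_0 \le 2L$. Applying Lemma~\ref{lm:w_linfty_t0} to the solution restarted at $t_0 = kT^\star$ (with the value $\tilde L \defs B_k$, which dominates both $\|w(\cdot, kT^\star)\|_{\leb\infty}$ and $\|p(\cdot, kT^\star)\|_{\leb\infty}$ individually) yields
\begin{align*}
  \|w(\cdot, t)\|_{\leb\infty} + \|p(\cdot, t)\|_{\leb\infty} \le K (B_k + 1)
  \qquad \text{for all $t \in [kT^\star, (k+1)T^\star] \cap [0, \tmax)$},
\end{align*}
so that $B_{k+1} \le K (B_k + 1)$.

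Iterating this recursion yields $B_k \le C_2 (K+1)^k (L+1)$ for some constant $C_2 = C_2(K)$; indeed, the substitution $a_k = B_k + \frac{K}{K-1}$ (or the trivial observation $B_{k+1}+1 \le (K+1)(B_k + 1)$) linearises it. For any $t \in (0, \tmax)$, choosing $k = \lceil t / T^\star \rceil$ gives $k \le t/T^\star + 1$ and hence
\begin{align*}
  \|w(\cdot, t)\|_{\leb\infty} + \|p(\cdot, t)\|_{\leb\infty} \le B_k \le C_2 (K+1)^{1 + t/T^\star}(L+1) \le \ure^{C_1 t} \tilde C_2 (L+1),
\end{align*}
with $C_1 \defs \frac{\ln(K+1)}{T^\star}$ and $\tilde C_2 \defs C_2 (K+1)$. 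This is the claim of \eqref{eq:w_linfty_tmax:statement}. The only place requiring slight care is the verification that the restarted-problem hypotheses (regularity and Neumann compatibility of the new initial data, and the $c$-bound by $M$) persist along the iteration, which, as noted, follows from Lemma~\ref{lm:local_ex} and Lemma~\ref{lm:c_linfty}; no further estimates are needed.
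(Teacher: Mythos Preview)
Your proof is correct and follows essentially the same approach as the paper: both iterate Lemma~\ref{lm:w_linfty_t0} in steps of length $T^\star$, exploiting that $T^\star$ and $K$ depend only on $M$ (which is preserved for $c$ by Lemma~\ref{lm:c_linfty}), and convert the resulting geometric growth into an exponential-in-$t$ bound. The only cosmetic differences are that the paper tracks per-interval suprema $A_j$ with ratio $2K+1$ while you track cumulative suprema $B_k$ with ratio $K+1$; one small writeup point to tidy is that your $B_k$ is only defined for $kT^\star < \tmax$, so the final step with $k = \lceil t/T^\star\rceil$ should either extend the definition of $B_k$ to all $k$ (the sup being over $[0,kT^\star]\cap[0,\tmax)$ regardless) or appeal directly to the restart at $(k-1)T^\star$ in the boundary case.
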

\begin{proof}
  We set $w(\cdot, t) = w_0$ and $p(\cdot, t) = p_0$ for $t < 0$,
  and let $T^\star > 0$ and $K > 1$ be as given by Lemma~\ref{lm:p_linfty_le_w_linfty} and Lemma~\ref{lm:w_linfty_t0}, respectively.
  Moreover, setting
  \begin{align*}
    I_j \defs ((j-1) T^\star, j T^\star] \cap (-\infty, \tmax)
    \qquad \text{for $j \in \N_0$}
  \end{align*}
  and applying Lemma~\ref{lm:p_linfty_le_w_linfty} (which is applicable for the same $M$ by Lemma~\ref{lm:c_linfty}) and Lemma~\ref{lm:w_linfty_t0} to initial data $(w, c, p)(\cdot, (j-1) T^\star)$,
  we can estimate
  \begin{align*}
          \|p\|_{L^\infty(\Omega \times I_j)}
    &\le  \|p\|_{L^\infty(\Omega \times I_{j-1})} + \|w\|_{L^\infty(\Omega \times I_j)}
  \intertext{and}
          \|w\|_{L^\infty(\Omega \times I_j)}
    &\le  K \left( \|w\|_{L^\infty(\Omega \times I_{j-1})} + \|p\|_{L^\infty(\Omega \times I_{j-1})} + 1 \right)
  \end{align*}
  for all $j \in \N$ with $(j-1) T^\star < \tmax$.
  However, the same estimates hold trivially also in the case of $(j-1) T^\star \ge \tmax$, as then $I_j = \emptyset$.
  Thus,
  \begin{align*}
    A_j \defs \|p\|_{L^\infty(\Omega \times I_j)} + \|w\|_{L^\infty(\Omega \times I_j)} + 1, \quad j \in \N,
  \end{align*}
  fulfills
  \begin{align*}
        A_j
    \le \|p\|_{L^\infty(\Omega \times I_{j-1})}
        + 2 K \left( \|w\|_{L^\infty(\Omega \times I_{j-1})} + \|p\|_{L^\infty(\Omega \times I_{j-1})} + 1 \right)
        + 1
    \le (2K+1) A_{j-1}
    \qquad \text{for all $j \in \N$}.
  \end{align*}
  A straightforward induction then yields $A_j \le (2K+1)^j A_0 \le \ure^{j \ln(2K+1)} (2L+1)$ for all $j \in \N$.
  If $t \in I_j$ for some $j \in \N_0$ and thus $(j-1) T^\star < t$, that is $j < \frac{t}{T^\star} + 1$, then
  \begin{align*}
        \|w(\cdot, t)\|_{\leb\infty} + \|p(\cdot, t)\|_{\leb\infty}
    \le A_j
    \le \ure^{j \ln(2K+1)} (2L+1)
    \le \ure^{t (T^\star)^{-1} \ln(2K+1)} 2(2K+1)(L+1).
  \end{align*}
  Since $(0, \tmax) \subset \bigcup_{j \in \N_0} I_j$, this implies \eqref{eq:w_linfty_tmax:statement} for $C_1 = \frac{\ln(2K+1)}{T^\star}$ and $C_2 = 2(2K+1)$.
\end{proof}

\subsection{Gradient bounds in \tops{$L^4$}{L4}}
While the $L^\infty$ estimates proven in the previous subsection surely form an important step towards proving global existence,
the extensibility criterion \eqref{eq:extensibility1} also requires boundedness of the gradients --
which will be the topic of the present and the following subsection.

As a first step, we again make use of the time regularization in the second and third equation in \eqref{eq:system-w} to obtain
\begin{lemma}\label{lm:c_w1q_rel}
  Suppose \eqref{eq:main_ass} and let $T_0 \in (0, \infty)$ as well as $q \in (1, \infty)$.
  For all $M > 0$, there exists $C > 0$ such that if $(w_0, c_0, p_0)$ satisfying \eqref{eq:local_ex:init_cond} are such that the corresponding solution $(w, c, p)$ of \eqref{eq:system-w} fulfills
  \begin{align}\label{eq:c_w1q_rel:w_c_p_bdd_M}
    w \le M, \quad
    c \le M
    \quad \text{and} \quad
    p \le M
    \qquad \text{in $\Ombar \times [0, T)$},
  \end{align}
  where $T \defs \min\{T_0, \tmax\}$,
  then
  \begin{align}\label{eq:c_w1q_rel:statement}
          \left( \intom |\nabla c(\cdot, t)|^q + \intom |\nabla p(\cdot, t)|^q\right)
    &\le  C \left( \intom |\nabla c_0|^q + \intom |\nabla p_0|^q + \int_0^t \intom |\nabla w|^q \right)
    \qquad \text{for all $t \in [0, T)$}.
  \end{align}
\end{lemma}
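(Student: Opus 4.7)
My plan is to differentiate the ODEs in the second and third equation of \eqref{eq:system-w} in space, test the resulting identities with $|\nabla c|^{q-2}\nabla c$ and $|\nabla p|^{q-2}\nabla p$, absorb everything using the $L^\infty$ bounds in \eqref{eq:c_w1q_rel:w_c_p_bdd_M} and Young's inequality, and then conclude via a Gr\"onwall argument.

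First, since Lemma~\ref{lm:local_ex} guarantees that $(w,c,p)$ is $C^1$ in space on $[0,T)$, we may take the spatial gradient of $c_t = -pc$ and $p_t = w\ure^{\alpha c}c - p$ to obtain the pointwise identities
\begin{align*}
  \nabla c_t &= - c \nabla p - p \nabla c, \\
  \nabla p_t &= \ure^{\alpha c} c \nabla w + w\ure^{\alpha c}(\alpha c + 1)\nabla c - \nabla p
\end{align*}
in $\Omega \times (0,T)$. Multiplying the first by $|\nabla c|^{q-2}\nabla c$, the second by $|\nabla p|^{q-2}\nabla p$, integrating over $\Omega$, and using $\frac{1}{q}\frac{d}{dt}\int_\Omega |\nabla c|^q = \int_\Omega |\nabla c|^{q-2}\nabla c \cdot \nabla c_t$ (and analogously for $p$) yields
\begin{align*}
  \frac{1}{q}\ddt \intom |\nabla c|^q &\le M \intom |\nabla c|^{q-1}|\nabla p| + M \intom |\nabla c|^q, \\
  \frac{1}{q}\ddt \intom |\nabla p|^q &\le M\ure^{\alpha M}\intom |\nabla p|^{q-1}|\nabla w| + M\ure^{\alpha M}(\alpha M+1)\intom |\nabla p|^{q-1}|\nabla c| - \intom |\nabla p|^q,
\end{align*}
where the $L^\infty$ bounds from \eqref{eq:c_w1q_rel:w_c_p_bdd_M} have been inserted.

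Next, Young's inequality in the form $a^{q-1}b \le \tfrac{q-1}{q}a^q + \tfrac{1}{q}b^q$ applied to each mixed product turns the right-hand sides into a linear combination of $\intom |\nabla c|^q$, $\intom |\nabla p|^q$ and $\intom |\nabla w|^q$, with constants depending only on $M$, $\alpha$ and $q$. Summing the two inequalities therefore gives
\begin{align*}
  \ddt\kl{\intom |\nabla c|^q + \intom |\nabla p|^q} \le C_1 \kl{\intom |\nabla c|^q + \intom |\nabla p|^q} + C_2 \intom |\nabla w|^q
\end{align*}
on $(0,T)$ for some $C_1, C_2 > 0$ depending on $M$, $\alpha$ and $q$.

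Finally, Gr\"onwall's inequality applied to this ordinary differential inequality produces
\begin{align*}
  \intom |\nabla c(\cdot,t)|^q + \intom |\nabla p(\cdot,t)|^q \le \ure^{C_1 t}\kl{\intom |\nabla c_0|^q + \intom |\nabla p_0|^q + C_2 \int_0^t \intom |\nabla w|^q}
\end{align*}
for every $t \in [0,T)$, and since $t < T_0$ the factor $\ure^{C_1 t}$ is bounded by $\ure^{C_1 T_0}$; choosing $C \defs \ure^{C_1 T_0}\max\{1, C_2\}$ delivers \eqref{eq:c_w1q_rel:statement}. There is no real obstacle here: the argument is essentially a bookkeeping exercise because both equations for $c$ and $p$ are purely transport/reaction in time, so the $\nabla w$ contribution enters only linearly through the forcing in $\nabla p_t$ and no spatial derivatives of $w$ of higher order appear.
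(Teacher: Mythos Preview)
Your proof is correct and follows essentially the same approach as the paper: differentiate the second and third equations of \eqref{eq:system-w} in space, test with $|\nabla c|^{q-2}\nabla c$ and $|\nabla p|^{q-2}\nabla p$, estimate via the $L^\infty$ bounds and Young's inequality, and close with Gr\"onwall. The paper's argument is identical in structure and differs only in the bookkeeping of constants.
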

\begin{proof}
  According to \eqref{eq:system-w},
  \begin{align*}
    (\nabla c)_t = - p \nabla c - c \nabla p
    \quad \text{and} \quad
    (\nabla p)_t = - \nabla p  + w (\alpha c + 1) \ure^{\alpha c} \nabla c + \ure^{\alpha c} c \nabla w
  \end{align*}
  hold in $\Omega \times (0, T)$.
  By testing these equations with $q |\nabla c|^{q-2} \nabla c$ and $q |\nabla p|^{q-2} \nabla p$, respectively, and applying Young's inequality, we obtain
  \begin{align*}
        \ddt \intom |\nabla c|^q
    =   - q \intom p |\nabla c|^q
        - q \intom c |\nabla c|^{q-2} \nabla c \cdot \nabla p
    \le Mq \left( \intom |\nabla c|^q + \intom |\nabla p|^q \right)
  \end{align*}
  and
  \begin{align*}
          \ddt \intom |\nabla p|^q
    &=    - q \intom |\nabla p|^q
          + q \intom w (\alpha c + 1) \ure^{\alpha c} |\nabla p|^{q-2} \nabla p \cdot \nabla c
          + q \intom \ure^{\alpha c} c |\nabla p|^{q-2} \nabla p \cdot \nabla w \\
    &\le  M (\alpha  M + 1) \ure^{\alpha M} q \left( \intom |\nabla c|^q + \intom |\nabla p|^q \right)
          + M \ure^{\alpha M} q \left( \intom |\nabla p|^q + \intom |\nabla w|^q \right)
  \end{align*}
  in $(0, T)$.
  Thus, setting $c_1 \defs Mq + M (\alpha  M + 1) \ure^{\alpha M} q + M \ure^{\alpha M} q$, we can conclude
  \begin{align*}
        \ddt \left( \intom |\nabla c|^q + \intom |\nabla p|^q \right)
    \le c_1 \left( \intom |\nabla c|^q + \intom |\nabla p|^q \right) + c_1 \intom |\nabla w|^q
    \qquad \text{in $(0, T)$},
  \end{align*}
  which after an application of Grönwall's inequality turns into
  \begin{align*}
          \left( \intom |\nabla c(\cdot, t)|^q + \intom |\nabla p(\cdot, t)|^q\right)
    &\le  \ure^{c_1 t} \left( \intom |\nabla c_0|^q + \intom |\nabla p_0|^q\right)
          + c_1 \int_0^t \ure^{c_1(t-s)} \intom |\nabla w(\cdot, s)|^q \ds \\
    &\le  \ure^{c_1 T} \left( \intom |\nabla c_0|^q + \intom |\nabla p_0|^q\right)
          + c_1 \ure^{c_1 T} \int_0^t \intom |\nabla w|^q,
          \quad t \in (0, T),
  \end{align*}
  and thus asserts \eqref{eq:c_w1q_rel:statement} for $C \defs \max\{c_1, 1\} \ure^{c_1 T}$.
\end{proof}

Next, we follow \cite[Lemma~3.14]{tao_win_haptotaxis_energy} and test the first equation in \eqref{eq:system-w} with $-\Delta w$,
which when combined with Lemma~\ref{lm:c_w1q_rel} allows us to obtain certain gradient bounds first on small time scales and then by means of an iteration argument also on each finite time interval.
\begin{lemma}\label{lm:c_w14_t1}
  Suppose \eqref{eq:main_ass} and let $M > 0$. There exists $T_1 \in (0, \infty)$
  such that if $(w_0, c_0, p_0)$ satisfying \eqref{eq:local_ex:init_cond} are such that the corresponding solution $(w, c, p)$ of \eqref{eq:system-w} fulfills \eqref{eq:c_w1q_rel:w_c_p_bdd_M},
  we can find $C > 0$ such that
  \begin{align}\label{eq:c_w14_t1:statement}
    \intom |\nabla c(\cdot, t)|^4 \le C
    \qquad \text{for all $t \in [0, T_1] \cap [0, \tmax)$}.
  \end{align}
\end{lemma}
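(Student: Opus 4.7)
The strategy is to combine the time-regularization information provided by Lemma~\ref{lm:c_w1q_rel} with $q=4$ with the classical energy test of the first equation of \eqref{eq:system-w} against $-\Delta w$. The former bounds $\|\nabla c(\cdot,t)\|_{L^4}^4$ in terms of the initial data and $\int_0^t \|\nabla w\|_{L^4}^4$; the latter will produce a differential inequality that controls $\intom |\nabla w|^2$ and $\int_0^t \|\Delta w\|_{L^2}^2$ in terms of $\int_0^t \|\nabla c\|_{L^4}^4$. Both inequalities together form a coupled system that can be closed on a short time interval via a Grönwall-type absorption, provided the length $T_1$ of the interval depends only on $M$. This mirrors the approach of \cite[Lemma~3.14]{tao_win_haptotaxis_energy}.

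Testing the first line of \eqref{eq:system-w} with $-\Delta w$ and exploiting the Neumann condition $\partial_\nu w = 0$ produces
\begin{align*}
  \tfrac12 \ddt \intom |\nabla w|^2 + \tfrac1\alpha \intom |\Delta w|^2 = -\intom (\nabla c \cdot \nabla w)\Delta w - \intom \bigl(\alpha pcw + \mu w - \mu \ure^{\alpha c} w^2\bigr)\Delta w.
\end{align*}
The zeroth-order term is $L^\infty(\Omega)$-bounded by $C(M)$ thanks to \eqref{eq:c_w1q_rel:w_c_p_bdd_M} and is therefore absorbed into $\tfrac{1}{8\alpha}\|\Delta w\|_{L^2}^2 + C(M,|\Omega|)$ via Young. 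For the haptotactic contribution, Young together with Cauchy-Schwarz gives $\int |\nabla c||\nabla w||\Delta w| \le \tfrac{1}{8\alpha}\|\Delta w\|_{L^2}^2 + C \|\nabla c\|_{L^4}^2 \|\nabla w\|_{L^4}^2$. Because $n \le 3$, the Gagliardo--Nirenberg inequality combined with elliptic regularity for the Neumann Laplacian and the $L^\infty$ bound on $w$ gives $\|\nabla w\|_{L^4}^2 \le C(\|\Delta w\|_{L^2} + 1)$, and a further Young step turns the preceding product into $\tfrac{1}{8\alpha}\|\Delta w\|_{L^2}^2 + C(\|\nabla c\|_{L^4}^4 + 1)$. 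Absorbing all $\|\Delta w\|_{L^2}^2$ terms into the left-hand side and integrating in time I obtain, for suitable $K_1, K_2 > 0$ with $K_2$ depending only on $M$ and $|\Omega|$,
\begin{align*}
  \intom |\nabla w(\cdot,t)|^2 + \tfrac{1}{2\alpha} \int_0^t \intom |\Delta w|^2 \le K_1 + K_2 \int_0^t \|\nabla c(\cdot,s)\|_{L^4}^4 \ds.
\end{align*}

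Feeding in Lemma~\ref{lm:c_w1q_rel} with $q = 4$ together with the bound $\intom |\nabla w|^4 \le C(\|\Delta w\|_{L^2}^2 + 1)$, which follows from the same Gagliardo--Nirenberg step, and integrating, one arrives at an estimate of the form
\begin{align*}
  \intom |\nabla w(\cdot,t)|^2 + \tfrac{1}{2\alpha} \int_0^t \intom |\Delta w|^2 \le K_3(1 + t + t^2) + K_4 \, t \int_0^t \intom |\Delta w|^2
\end{align*}
valid on the interval where \eqref{eq:c_w1q_rel:w_c_p_bdd_M} holds, with $K_4$ depending only on $M$ and $|\Omega|$. Fixing $T_1 \in (0,\infty)$ so small that $K_4 T_1 \le \tfrac{1}{4\alpha}$, the last term is absorbed into the left, yielding a uniform bound on $\int_0^t \intom |\Delta w|^2$ for $t \in [0,T_1]\cap[0,\tmax)$. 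Inserting this bound back into Lemma~\ref{lm:c_w1q_rel} finally produces the claimed pointwise bound for $\|\nabla c(\cdot,t)\|_{L^4}^4$. The main obstacle is purely one of bookkeeping: in order that $T_1$ be chosen independently of the initial data, one must carefully track through the Gagliardo--Nirenberg inequality, the elliptic regularity for the Neumann Laplacian, and Lemma~\ref{lm:c_w1q_rel} that the constants $K_2$ and $K_4$ depend only on $M$ (and the domain). This is the step that most benefits from care but involves no conceptually new difficulty beyond what the transformation in Subsection~\ref{sec:transformations} has already enabled by removing the troublesome $\Delta c$ term from the equation for $w$.
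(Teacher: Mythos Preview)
Your proposal is correct and follows essentially the same approach as the paper: test the $w$-equation with $-\Delta w$, use Gagliardo--Nirenberg (with the $L^\infty$ bound on $w$) to control $\|\nabla w\|_{L^4}^4$ by $\|\Delta w\|_{L^2}^2$, feed in Lemma~\ref{lm:c_w1q_rel} to replace $\int_0^t\|\nabla c\|_{L^4}^4$ by a term of size $t\int_0^t\|\nabla w\|_{L^4}^4$, and then close by choosing $T_1$ small depending only on $M$. The only cosmetic difference is the order in which you apply Young and Gagliardo--Nirenberg to the cross term $\int(\nabla c\cdot\nabla w)\Delta w$; the paper first splits it into $\frac{1}{4c_2\alpha}\int|\nabla w|^4 + c_2\alpha^3\int|\nabla c|^4$ and converts $|\nabla w|^4$ to $|\Delta w|^2$ at the very end, whereas you interpolate $\|\nabla w\|_{L^4}^2$ immediately---both routes are equivalent and your tracking of which constants depend only on $M$ (namely $K_2$, $K_4$) versus on the initial data ($K_1$, $K_3$) is exactly what is needed.
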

\begin{proof}
  By $c_1$, we denote the constant appearing in \eqref{eq:c_w1q_rel:statement} given by Lemma~\ref{lm:c_w1q_rel} applied to $T_0 = 1$ and $q = 4$.
  Moreover, the Gagliardo--Nirenberg inequality (cf.\ \cite{NirenbergEllipticPartialDifferential1959} or \cite[(A.2)]{FuestGlobalSolutionsHomogeneous2020} for this form) asserts that there is $c_2 > 0$ such that
  \begin{align}\label{eq:c_w14_t_1:gni}
    \intom |\nabla \varphi|^4 \le c_2 \intom |\Delta \varphi|^2 + c_2
    \qquad \text{for all $\varphi \in \con2$ with $\partial_\nu \varphi = 0$ in $\partial \Omega$ and $\|\varphi\|_{\leb\infty} \le M$}.
  \end{align}
  We then choose $T_1 \in (0, 1)$ so small that
  \begin{align}\label{eq:c_w14_t1:def_t1}
    T_1 c_1 c_2 \alpha^3 \le \frac{1}{8c_2\alpha}
  \end{align}
  holds and fix a solution $(w, c, p )$ of $\eqref{eq:system-w}$ with maximal existence time $\tmax$ fulfilling \eqref{eq:c_w1q_rel:w_c_p_bdd_M}.

  These choices now allow us to infer from Lemma~\ref{lm:c_w1q_rel} that
  \begin{align*}
          \intntom |\nabla c|^4
    &\le  c_1 \left( \intntom |\nabla c_0|^4 + \intntom |\nabla p_0|^4 + \int_0^T \int_0^t \intom |\nabla w(\cdot, \tau)|^4 \dtau \dt \right) \\
    &\le  c_1 \left( \intntom |\nabla c_0|^4 + \intntom |\nabla p_0|^4 + \int_0^T \int_0^T \intom |\nabla w(\cdot, \tau)|^4 \dtau \dt \right) \\
    &\le  T_1 c_1 \left( \intom |\nabla c_0|^4 + \intom |\nabla p_0|^4 + \intntom |\nabla w|^4 \right)
  \end{align*}
  holds for all $T \in (0, T_1] \cap (0, \tmax)$.
  Next, we test the first equation in \eqref{eq:system-w} with $-\Delta w$ to obtain
  \begin{align*}
          \frac12 \ddt \intom |\nabla w|^2
    &=    - \frac{1}{\alpha} \intom |\Delta w|^2
          - \intom (\nabla c \cdot \nabla w) \Delta w
          - \intom (\alpha p c w + \mu w - \mu \ure^{\alpha c} w^2) \Delta w \\
    &\le  - \frac{1}{2\alpha} \intom |\Delta w|^2
          + \alpha \intom |\nabla c \cdot \nabla w|^2
          + \underbrace{\alpha |\Omega| (\alpha M^3 + \mu M + \mu \ure^{\alpha M} M^2)^2}_{\sfed c_3}
  \end{align*}
  and thus
  \begin{align}\label{eq:c_w14_t1:integrated}
    &\pe  \frac12 \intom |\nabla w(\cdot, T)|^2
          - \frac12 \intom |\nabla w_0|^2
          + \frac{1}{2\alpha} \intntom |\Delta w|^2 \notag \\
    &\le  \alpha \intntom |\nabla c \cdot \nabla w|^2
          + T c_3 \notag \\
    &\le  \frac1{4 c_2 \alpha} \intntom |\nabla w|^4 + c_2 \alpha^3 \intntom |\nabla c|^4 + T_1 c_3 \notag \\
    &\le  \left( \frac1{4 c_2 \alpha} + T_1 c_1 c_2 \alpha^3 \right) \intntom |\nabla w|^4 + T_1 c_1 c_2 \alpha^3 \left( \intom |\nabla c_0|^4 + \intom |\nabla p_0|^4 \right) + T_1 c_3
  \end{align}
  for all $T \in (0, T_1] \cap (0, \tmax)$.
  Since \eqref{eq:c_w14_t1:def_t1} and \eqref{eq:c_w14_t_1:gni} imply
  \allowdisplaybreaks[0]
  \begin{align}\label{eq:c_w14_t1:est_nabla_w14_factor}
          \left( \frac1{4 c_2 \alpha} + T_1 c_1 c_2 \alpha^3 \right) \intntom |\nabla w|^4
    &\le  \left( \frac1{2 c_2 \alpha} - \frac1{8c_2 \alpha} \right) \intntom |\nabla w|^4 \notag \\
    &\le  \frac{1}{2\alpha} \intntom |\Delta w|^2 + \frac{T}{2\alpha} - \frac1{8c_2 \alpha} \intntom |\nabla w|^4
  \end{align}
  \allowdisplaybreaks
  for all $T \in (0, T_1] \cap (0, \tmax)$,
  we conclude from \eqref{eq:c_w14_t1:integrated} and \eqref{eq:c_w14_t1:est_nabla_w14_factor} that
  \begin{align*}
    \frac1{8c_2 \alpha} \intntom |\nabla w|^4 \le \frac12 \intom |\nabla w_0|^2 + T_1 c_1 c_2 \alpha^3 \left( \intom |\nabla c_0|^4 + \intom |\nabla p_0|^4 \right) + T_1 c_3 + \frac{T}{2\alpha}
  \end{align*}
  for all $T \in (0, T_1] \cap (0, \tmax)$.
  Again applying Lemma~\ref{lm:c_w1q_rel}, we finally see that \eqref{eq:c_w14_t1:statement} holds for some $C > 0$ (depending on $w_0$, $c_0$ and $p_0$).
\end{proof}

\begin{lemma}\label{lm:c_w14_tmax}
  Suppose \eqref{eq:main_ass} and that $(w_0, c_0, p_0)$ satisfies \eqref{eq:local_ex:init_cond}.
  For all $T \in (0, \tmax] \cap (0, \infty)$, there exists $C > 0$ such that the solution of \eqref{eq:system-w} fulfills
  \begin{align}\label{eq:c_w14_tmax:statement}
    \intom |\nabla c(\cdot, t)|^4 \le C
    \qquad \text{for all $t \in [0, T]$}.
  \end{align}
\end{lemma}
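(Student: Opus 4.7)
The plan is to extend the short-time $L^4$-gradient bound of Lemma~\ref{lm:c_w14_t1} to the full interval $[0, T]$ by an iteration argument on consecutive time intervals of fixed length. The decisive ingredient is that the time length $T_1$ produced by Lemma~\ref{lm:c_w14_t1} depends only on the $L^\infty$-bound $M$ and not on the initial data; once such a uniform $M$ is available on $[0, T]$, the lemma can be applied on successive shifted initial data without needing $T_1$ to shrink.

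First, I would fix $T \in (0, \tmax] \cap (0, \infty)$ and use Lemma~\ref{lm:c_linfty} together with Lemma~\ref{lm:w_linfty_tmax} to obtain a single $M > 0$ such that $w, c, p \le M$ throughout $\Ombar \times [0, T)$; this is possible since $T$ is finite and the bound in \eqref{eq:w_linfty_tmax:statement} grows only exponentially. With this $M$, Lemma~\ref{lm:c_w14_t1} supplies $T_1 = T_1(M) > 0$, and I would partition $[0, T]$ via $t_j \defs \min\{j T_1, T\}$ for $j = 0, 1, \dots, J$ with $J \defs \lceil T / T_1 \rceil$, then apply Lemma~\ref{lm:c_w14_t1} on each subinterval to the shifted solution with initial data $(w(\cdot, t_j), c(\cdot, t_j), p(\cdot, t_j))$. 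The regularity requirements in \eqref{eq:local_ex:init_cond} at each intermediate time $t_j$ are inherited from the $(C^{2+\gamma, 1+\gamma/2}(\Ombar \times (0, \tmax)))^3$-regularity of $(w, c, p)$ given by Lemma~\ref{lm:local_ex}, together with the Neumann condition $\partial_\nu w = 0$ built into \eqref{eq:system-w}.

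The one technical point -- and the closest thing to an obstacle -- is that the constant $C$ in Lemma~\ref{lm:c_w14_t1} depends on the data via $\|\nabla w_0\|_{\leb 2}^2$, $\|\nabla c_0\|_{\leb 4}^4$ and $\|\nabla p_0\|_{\leb 4}^4$, so I need to check that these quantities remain finite at each shift $t_j$. Inspecting the proof of Lemma~\ref{lm:c_w14_t1}, one sees that exactly these three norms are produced at the final time: the $L^\infty_t L^2_x$ bound on $\nabla w$ is contained in the energy identity~\eqref{eq:c_w14_t1:integrated}, while the $L^4$-bounds on $\nabla c$ and $\nabla p$ follow from Lemma~\ref{lm:c_w1q_rel} combined with the space-time $L^4$ control of $\nabla w$. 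Hence, by induction on $j$, the triple $(\|\nabla w(\cdot, t_j)\|_{\leb 2}^2, \|\nabla c(\cdot, t_j)\|_{\leb 4}^4, \|\nabla p(\cdot, t_j)\|_{\leb 4}^4)$ stays finite (growing at most geometrically in $j$) at every $t_j$ with $j \le J$.

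Since $J$ is finite, each of the finitely many applications of Lemma~\ref{lm:c_w14_t1} delivers a finite constant $C_j$ bounding $\intom |\nabla c(\cdot, t)|^4$ on $[t_j, t_{j+1}] \cap [0, \tmax)$. Setting $C \defs \max_{0 \le j < J} C_j$ then yields \eqref{eq:c_w14_tmax:statement}. No additional a priori estimate is needed beyond what is already available from Lemmata~\ref{lm:c_linfty}, \ref{lm:w_linfty_tmax}, \ref{lm:c_w1q_rel} and \ref{lm:c_w14_t1}; the argument is purely a bookkeeping of how the constants propagate across a bounded number of time slices.
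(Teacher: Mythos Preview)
Your proposal is correct and follows essentially the same route as the paper: obtain a uniform $L^\infty$ bound $M$ on $[0,T]$ via Lemmata~\ref{lm:c_linfty} and~\ref{lm:w_linfty_tmax}, take the corresponding $T_1=T_1(M)$ from Lemma~\ref{lm:c_w14_t1}, apply that lemma on finitely many shifted intervals and take the maximum of the resulting constants. Your additional discussion of how the gradient norms propagate is harmless but unnecessary here, since the $(C^{2+\gamma,1+\gamma/2})^3$-regularity from Lemma~\ref{lm:local_ex} already guarantees that $\nabla w(\cdot,t_j)$, $\nabla c(\cdot,t_j)$, $\nabla p(\cdot,t_j)$ lie in $C^0(\Ombar)$ and hence in every $L^q(\Omega)$.
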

\begin{proof}
  Lemma~\ref{lm:c_linfty} and Lemma~\ref{lm:w_linfty_tmax} assert that \eqref{eq:c_w1q_rel:w_c_p_bdd_M} holds for some $M > 0$.
  We fix $T_1 \in (0, \infty)$ be as given by Lemma~\ref{lm:c_w14_t1} for this $M$.
  If $j \in \N_0$ is such that $T_1 j < T$, 
  an application of Lemma~\ref{lm:c_w14_t1} to the solution with initial data $(w, c, p)(\cdot, T_1 j)$
  shows that there is $c_j > 0$ such that \eqref{eq:c_w14_tmax:statement} holds with $C$ replaced by $c_j$ for all $t \in [T_1 j, T_1 (j+1)]$.
  Thus, the statement follows for $C \defs \max\{\, c_j : j \in \N_0, j < \frac{T}{T_1} \,\}$.
\end{proof}

\subsection{Hölder estimates for the gradients}
Lemma~\ref{lm:c_linfty}, Lemma~\ref{lm:w_linfty_tmax} and Lemma~\ref{lm:c_w14_tmax} provide several bounds for the right-hand side of the first equation in \eqref{eq:system-w},
which allow us to make use of parabolic regularity theory to iteratively improve our bounds.
In particular, we adapt the techniques developed in \cite[pages~791--792]{tao_win_haptotaxis_energy}, where only planar domains are considered, to the three-dimensional setting.

As it is used multiple times in the proof of Lemma~\ref{lm:global_ex} below, we first state the following consequence of maximal Sobolev regularity results.
\begin{lemma}\label{lm:max_sob_reg}
  Suppose that $\Omega \subset \R^n$, $n \in \N$, is a smooth, bounded domain.
  Let $T > 0$, $\alpha > 0$, $s \in (0, \infty)$ and $q \in (n, \infty)$.
  For any $M > 0$, there is $C > 0$ such that if $w_0 \in \con2$ with $\partial_\nu w_0 = 0$ on $\partial \Omega$, $f \in L^\infty((0, T); L^q(\Omega))$ and $g \in L^s((0, T); \leb{q})$
  are such that
  \begin{align}\label{eq:max_sob_reg:ass_M}
    \|w_0\|_{\con2} \le M, \quad
    \|f\|_{L^\infty((0, T); L^q(\Omega))} \le M
    \quad \text{and} \quad
    \|g\|_{L^s((0, T); \leb{q})} \le M,
  \end{align}
  then every solution $w \in C^{2, 1}(\Ombar \times (0, T)) \cap C^1(\Ombar \times [0, T))$ of
  \begin{align*}
    \begin{cases}
      w_t = \frac{1}{\alpha} \Delta w + f \cdot \nabla w + g & \text{in $\Omega \times (0, T)$}, \\
      \partial_\nu w = 0                                     & \text{on $\partial \Omega \times (0, T)$}, \\
      w(\cdot, 0) = w_0                                      & \text{in $\Omega$}
    \end{cases}
  \end{align*}
  with $|w| \le M$ in $\Omega \times (0, T)$
  fulfills
  \begin{align}\label{eq:max_sob_reg:statement}
    \|w_t\|_{L^{s}((0, T); \leb q)} + \|\Delta w\|_{L^{s}((0, T); \leb q)} + \|\nabla w\|_{L^{s}((0, T); \leb\infty)} \le C.
  \end{align}
\end{lemma}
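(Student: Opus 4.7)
The plan is to view $w$ as a solution of the linear heat equation $w_t - \frac{1}{\alpha}\Delta w = h$ with homogeneous Neumann boundary conditions and inhomogeneity $h \defs f\cdot\nabla w + g$, to apply maximal Sobolev regularity for the Neumann Laplacian, and then to absorb the $\nabla w$ occurring in $h$ into the left-hand side by means of a subcritical Sobolev interpolation. The hypothesis $q>n$ is what makes this absorption possible.

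First, I would reduce to the case $s>1$. Since $T<\infty$, Hölder's inequality in time shows that a bound in $L^{s_0}((0,T);\leb q)$ for some $s_0\ge s$ implies the corresponding bound in $L^s((0,T);\leb q)$ up to a factor depending only on $T$ and $s_0$; accordingly it suffices to establish \eqref{eq:max_sob_reg:statement} for arbitrarily large $s$. For such $s$, standard maximal $L^s$--$L^q$ regularity for the Neumann realisation of $\alpha^{-1}\Delta$ on the smooth bounded domain $\Omega$ yields a constant $C_1>0$ with
\begin{equation*}
 \norm[L^s((0,T);\leb q)]{w_t} + \norm[L^s((0,T);\leb q)]{\Delta w} \le C_1 \bigl( \norm[L^s((0,T);\leb q)]{f\cdot\nabla w} + \norm[L^s((0,T);\leb q)]{g} + \norm[\con 2]{w_0} \bigr),
\end{equation*}
where I rely on $\con 2 \embed B^{2-2/s}_{q,s}$ (the relevant initial trace space) together with the compatibility condition $\partial_\nu w_0=0$ to match the abstract framework.

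The heart of the argument is the interpolation: since $q>n$, the Gagliardo--Nirenberg inequality combined with elliptic $\sob 2q$ regularity for the Neumann problem provides $\theta=\frac{q}{2q-n}\in(\tfrac12,1)$ and $C_2>0$ such that every $\varphi\in \sob 2q$ with $\partial_\nu\varphi=0$ and $\norm[\leb\infty]{\varphi}\le M$ satisfies $\norm[\leb\infty]{\nabla\varphi}\le C_2\bigl(\norm[\leb q]{\Delta\varphi}^\theta+1\bigr)$. Raising this estimate to the $s$-th power, integrating in time, and applying Hölder's inequality in time with exponents $\tfrac 1\theta$ and $\tfrac 1{1-\theta}$ followed by Young's inequality gives, for any prescribed $\eta>0$,
\begin{equation*}
 \norm[L^s((0,T);\leb\infty)]{\nabla w} \le \eta\,\norm[L^s((0,T);\leb q)]{\Delta w} + C_3(\eta,M,T,s,q,\Omega).
\end{equation*}
Combined with $\norm[L^s((0,T);\leb q)]{f\cdot\nabla w}\le \norm[L^\infty((0,T);\leb q)]{f}\cdot\norm[L^s((0,T);\leb\infty)]{\nabla w}\le M\bigl(\eta\norm[L^s((0,T);\leb q)]{\Delta w}+C_3\bigr)$, choosing $\eta$ small enough that $C_1 M\eta\le\tfrac12$ permits me to absorb the $\norm{\Delta w}$ contribution on the right into the left, producing the claimed bound on $\norm{w_t}_{L^s((0,T);\leb q)}+\norm{\Delta w}_{L^s((0,T);\leb q)}$. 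Feeding this back into the interpolation inequality then delivers the bound on $\norm{\nabla w}_{L^s((0,T);\leb\infty)}$, completing \eqref{eq:max_sob_reg:statement}.

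The main obstacle, and the reason $q>n$ is assumed, is precisely this absorption: it works only because the interpolation is strictly subcritical ($\theta<1$). The remaining points --- the reduction to $s>1$ by Hölder in time, the fact that the initial data hypothesis $w_0\in \con 2$ with $\partial_\nu w_0=0$ is compatible with whichever trace space the abstract maximal regularity result requires, and the routine dependence of constants on $M,T,s,q,\alpha,\Omega$ --- are essentially bookkeeping and pose no analytical difficulty.
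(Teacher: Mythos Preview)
Your approach is essentially the paper's: apply maximal Sobolev regularity to $w_t-\alpha^{-1}\Delta w=f\cdot\nabla w+g$, then absorb the $\nabla w$ contribution using the subcritical embedding afforded by $q>n$. The only cosmetic difference is that the paper invokes Ehrling's lemma (via the compact embedding $\sob 2q\embed\embed\sob 1\infty$) to write $\norm[\leb\infty]{\nabla\varphi}\le\eta\norm[\leb q]{\Delta\varphi}+C(\eta)\norm[\leb\infty]{\varphi}$ directly, whereas you reach the same absorption through Gagliardo--Nirenberg with exponent $\theta<1$ followed by Young's inequality; the two devices are interchangeable here.

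One point does not work as written: your reduction to large $s$ fails, because the hypothesis $g\in L^s((0,T);\leb q)$ cannot be upgraded to $g\in L^{s_0}((0,T);\leb q)$ for $s_0>s$, so you cannot simply run the argument at a larger exponent and then Hölder back down. That said, the paper's own proof (which cites Giga--Sohr) also tacitly requires $s\in(1,\infty)$ and never addresses $s\le 1$; since the lemma is only applied with $s=12$, this is a harmless oversight shared by both arguments rather than a substantive gap.
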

\begin{proof}
  We fix the data $w_0$, $f$ and $g$ and a solution $w$ but emphasize that the constants $c_1$ and $c_2$ below only depend on $M$ (and $\Omega$, $T$, $\alpha$, $s$ and $q$).
  Since $f \cdot \nabla w + g \in L_{\loc}^s([0, T); \leb q)$ by assumption, \cite[Theorem~2.3]{GigaSohrAbstractEstimatesCauchy1991} asserts
  that $w$ is also the unique solution of \cite[(2.6)]{GigaSohrAbstractEstimatesCauchy1991}
  and thus that the estimate \cite[(2.7)]{GigaSohrAbstractEstimatesCauchy1991} holds.
  From \cite[(2.7)]{GigaSohrAbstractEstimatesCauchy1991} in conjunction with \eqref{eq:max_sob_reg:ass_M}, we hence obtain $c_1 > 0$ such that
  \begin{align}\label{eq:max_sob_reg:basic}
        \|w_t\|_{L^{s}((0, T); \leb q)} + \frac{1}{\alpha} \|\Delta w\|_{L^{s}((0, T); \leb q)}
    \le c_1 \|f \cdot \nabla w\|_{L^{s}((0, T); \leb q)} + c_1.
  \end{align}
  Since $q > n$, the embedding $\sob2q \embed \embed \sob1{\infty}$ is compact,
  so that an application of Ehrling's lemma combined with elliptic regularity (cf.\ \cite[Theorem~19.1]{FriedmanPartialDifferentialEquations1976}) shows that there is $c_2 > 0$ such that
  \begin{align}\label{eq:max_sob_reg:ehrling}
        \|\nabla \varphi\|_{\leb\infty}
    \le \frac{1}{2M c_1 \alpha} \|\Delta \varphi\|_{\leb q} + c_2 \|\varphi\|_{\leb \infty}
    \qquad \text{for all $\varphi \in \con2$ with $\partial_\nu \varphi = 0$ on $\partial \Omega$}.
  \end{align}
  Additionally relying on Minkowski's inequality, we thus obtain
  \begin{align*}
          \|f \cdot \nabla w\|_{L^{s}((0, T); \leb{q})}
    &\le  \|f\|_{L^\infty((0, T); L^q(\Omega; \R^n))} \|\nabla w\|_{L^{s}((0, T); \leb{\infty})}  \\
    &\le  M \left( \frac{1}{2M c_1 \alpha} \|\Delta w\|_{L^{s}((0, T); \leb{q})} + c_2 \|w\|_{L^{s}((0, T); \leb \infty)} \right) \\
    &\le  \frac{1}{2c_1 \alpha} \|\Delta w\|_{L^{s}((0, T); \leb q)} + M^2 T^\frac1{s} c_2.
  \end{align*}
  In combination with \eqref{eq:max_sob_reg:basic}, this yields
  \begin{align*}
        \|w_t\|_{L^{s}((0, T); \leb q)} + \frac{1}{2\alpha} \|\Delta w\|_{L^{s}((0, T); \leb q)}
    \le c_1 + M^2 T^\frac1s c_1 c_2,
  \end{align*}
  whenceupon another application of \eqref{eq:max_sob_reg:ehrling} implies \eqref{eq:max_sob_reg:statement} for some $C > 0$.
\end{proof}

\begin{lemma}\label{lm:global_ex}
  Suppose \eqref{eq:main_ass} and that $(w_0, c_0, p_0)$ satisfies \eqref{eq:local_ex:init_cond}.
  Then the maximal classical solution $(w, c, p)$ of \eqref{eq:system-w} given by Lemma~\ref{lm:local_ex} is global in time.
\end{lemma}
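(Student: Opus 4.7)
The plan is to combine the already established $L^\infty$ bounds on $(w,c,p)$ (Lemmas~\ref{lm:c_linfty} and \ref{lm:w_linfty_tmax}) and the $L^\infty_t L^4_x$ bound on $\nabla c$ (Lemma~\ref{lm:c_w14_tmax}) with a bootstrap driven by maximal Sobolev regularity in order to verify, on any finite interval $(0,T) \subset (0,\Tmax)$, the first alternative in the extensibility criterion \eqref{eq:extensibility1}. Concretely, I would aim at a uniform estimate of $\|w(\cdot,t)\|_{C^{1+\gamma}(\Ombar)}$ on $[0,T]$, which via \eqref{eq:extensibility1} immediately forces $\Tmax=\infty$.

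For the first bootstrap step, I would view the $w$-equation in \eqref{eq:system-w} as a linear parabolic problem $w_t = \tfrac1\alpha\Delta w + f\cdot\nabla w + g$ with homogeneous Neumann data, where $f \defs \nabla c$ and $g \defs (\alpha pc + \mu - \mu\ure^{\alpha c}w)w$. Since $n\le 3$, the choice $q=4>n$ is admissible in Lemma~\ref{lm:max_sob_reg}: thanks to Lemma~\ref{lm:c_w14_tmax} we have $f\in L^\infty((0,T);L^4(\Omega))$, while the $L^\infty$ bounds of Lemmas~\ref{lm:c_linfty} and \ref{lm:w_linfty_tmax} make $g$ bounded, so $g\in L^s((0,T);L^4)$ for any $s\in(0,\infty)$. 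Applied with a suitably large $s$, Lemma~\ref{lm:max_sob_reg} then yields $\nabla w \in L^s((0,T);L^\infty(\Omega))$, and in particular $\nabla w \in L^q((0,T);L^q(\Omega))$ for arbitrarily large $q$.

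I would then feed this improved information back into Lemma~\ref{lm:c_w1q_rel}, which, together with $c_0,p_0\in C^{2+\gamma}$, promotes the estimate on $\nabla c$ and $\nabla p$ from $L^\infty_t L^4_x$ to $L^\infty_t L^q_x$ for every $q<\infty$. A second application of Lemma~\ref{lm:max_sob_reg} with a larger exponent $q$ then improves the bound on $\nabla w$ accordingly. After finitely many iterations I expect to arrive at $\nabla w \in L^\infty(\Omega\times(0,T))$, which by the ODE structure of the second and third equation and the pointwise identities $(\nabla c)_t = -p\nabla c - c\nabla p$ and $(\nabla p)_t = -\nabla p + w(\alpha c+1)\ure^{\alpha c}\nabla c + \ure^{\alpha c}c\nabla w$, combined with Grönwall's inequality, upgrades $\nabla c$ and $\nabla p$ to $L^\infty(\Omega\times(0,T))$ as well.

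With $f=\nabla c\in L^\infty(\Omega\times(0,T))$ and $g\in L^\infty(\Omega\times(0,T))$, the Hölder gradient estimates \cite[Theorems~1.1 and 1.2]{lieberman_gradient} already employed in the proof of Lemma~\ref{lm:local_ex} apply and produce a bound for $\|w\|_{C^{1+\gamma,(1+\gamma)/2}(\Ombar\times[0,T])}$ depending only on $T$ and the initial data. Thus $\limsup_{t\nearrow\Tmax}\|w(\cdot,t)\|_{C^{1+\gamma}(\Ombar)}<\infty$ whenever $\Tmax<\infty$, contradicting \eqref{eq:extensibility1}. The main delicate point is the very first bootstrap step in the three-dimensional setting: $L^4$ is the borderline exponent for which $q>n$ holds in Lemma~\ref{lm:max_sob_reg}, so one must be careful to exploit that Lemma~\ref{lm:c_w14_tmax} indeed provides an $L^\infty_t L^4_x$ (and not merely $L^4_{t,x}$) bound on $\nabla c$; once past that step, the bootstrap proceeds by standard maximal regularity arguments analogous to those in \cite[pages~791--792]{tao_win_haptotaxis_energy}.
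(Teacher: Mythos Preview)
Your overall strategy coincides with the paper's: bootstrap from the $L^\infty_t L^4_x$ bound on $\nabla c$ via Lemma~\ref{lm:max_sob_reg}, feed the resulting gain on $\nabla w$ back through Lemma~\ref{lm:c_w1q_rel}, and repeat. The paper does this with two concrete applications of Lemma~\ref{lm:max_sob_reg} (first $s=12$, $q=4$, then $s=12$, $q=12$) and then closes by the parabolic embedding \cite[Lemma~II.3.3]{LSU}, which directly yields $w\in C^{19/12,19/24}(\Ombar\times[0,\tmax])$ and hence a contradiction to \eqref{eq:extensibility1} (after arranging $\gamma<\tfrac{7}{12}$).

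There is, however, a genuine imprecision in your argument at the step ``after finitely many iterations I expect to arrive at $\nabla w\in L^\infty(\Omega\times(0,T))$''. Iterating Lemma~\ref{lm:max_sob_reg} never improves beyond $\nabla w\in L^s((0,T);L^\infty(\Omega))$ for arbitrary finite $s$; the conclusion \eqref{eq:max_sob_reg:statement} is always an $L^s$-in-time estimate, and increasing $q$ does not upgrade this to $L^\infty$ in time. So the iteration you describe stalls short of the stated goal. The fix is easy and in fact makes your route shorter than you propose: already after \emph{one} application of Lemma~\ref{lm:max_sob_reg} you have $\nabla w\in L^1((0,T);L^\infty(\Omega))$, and this is all the ODE system for $(\nabla c,\nabla p)$ needs. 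Indeed, from the pointwise identities you wrote down, Grönwall's inequality gives
\[
\|\nabla c(\cdot,t)\|_{\leb\infty}+\|\nabla p(\cdot,t)\|_{\leb\infty}
\le C\Big(\|\nabla c_0\|_{\leb\infty}+\|\nabla p_0\|_{\leb\infty}+\int_0^t\|\nabla w(\cdot,s)\|_{\leb\infty}\ds\Big),
\]
so $\nabla c\in L^\infty(\Omega\times(0,T))$ follows without first knowing $\nabla w\in L^\infty$. Then your appeal to \cite[Theorems~1.1 and 1.2]{lieberman_gradient} is legitimate and finishes the proof. Alternatively, you can bypass the ODE-plus-Lieberman detour entirely and, as the paper does, invoke the anisotropic embedding \cite[Lemma~II.3.3]{LSU} once $w_t,\Delta w\in L^s((0,T);L^q)$ with $\tfrac{2}{s}+\tfrac{n}{q}<1$.
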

\begin{proof}
  We may without loss of generality assume that $\gamma$ in \eqref{eq:local_ex:init_cond} satisfies $\gamma < \frac{7}{12}$,
  let $(w, c, p)$ the solution of \eqref{eq:system-w} provided by Lemma~\ref{lm:local_ex} and
  suppose that on the contrary the maximal existence time $\tmax$ is finite.
  By Lemma~\ref{lm:c_linfty} and Lemma~\ref{lm:w_linfty_tmax},
  \begin{align*}
    w_t = \tfrac{1}{\alpha} \Delta w + \nabla c \cdot \nabla w + g
    \qquad \text{in $\Ombar \times [0, \tmax)$}
  \end{align*}
  holds for $g = \alpha p c w + \mu w - \mu \ure^{\alpha c} w^2 \in L^\infty(\Omega \times (0, \tmax))$.
  Moreover, the initial data fulfill \eqref{eq:local_ex:init_cond},
  $\nabla c$ belongs to $L^\infty((0, \tmax); \leb4)$ by Lemma~\ref{lm:c_w14_tmax} (applied to $T = \tmax < \infty$)
  and $w$ is bounded by Lemma~\ref{lm:w_linfty_tmax},
  hence an application of Lemma~\ref{lm:max_sob_reg} with $s=12$ and $q=4$ shows that \eqref{eq:max_sob_reg:statement} holds, which entails that
  \begin{align*}
    \|\nabla w\|_{L^{12}(\Omega \times (0, \tmax))} \le c_1
  \end{align*}
  for some $c_1 > 0$.
  Therefore, Lemma~\ref{lm:c_w1q_rel} (applied to $T_0 = \tmax$) asserts that $\nabla c \in L^\infty((0, \tmax); \leb{12})$,
  so that we may again apply Lemma~\ref{lm:max_sob_reg}, this time with $s=12$ and $q=12$, to obtain $c_2 > 0$ such that
  \begin{align*}
    \|w_t\|_{L^{12}(\Omega \times (0, \tmax))} + \|\Delta w\|_{L^{12}(\Omega \times (0, \tmax))} \le c_2.
  \end{align*}
  This in turn renders \cite[Lemma~II.3.3]{LSU} applicable,
  which asserts finiteness of $\|w\|_{C^{\frac{19}{12}, \frac{19}{24}}(\Ombar \times [0, \tmax])}$,
  contradicting the extensibility criterion in Lemma~\ref{lm:local_ex}.
  Thus our assumption that $\tmax$ is finite must be false.
\end{proof}

\subsection{Proof of Theorem~\ref{th:global_ex}}
The proof of Theorem~\ref{th:global_ex} has now been reduced to referencing some of the lemmata above.
\begin{proof}[Proof of Theorem~\ref{th:global_ex}]
  Lemma~\ref{lm:local_ex} asserts the local existence of a unique maximal classical solution of \eqref{eq:system-w},
  which is global in time by Lemma~\ref{lm:global_ex}.
  Therefore, the statement follows by transforming back to the original variables;
  that is, first setting $u(x, t) \defs w(x, t) \ure^{\alpha c(x, t)}$ for $x \in \Ombar$ and $t \in [0, \infty)$
  and then applying the transformation in \eqref{eq:trans_1}.
\end{proof}

\section{Weak formulation, discretization and numerical solution}
\label{sec_discretization}
In this section, we address the numerical realization of \eqref{eq:system}.
To this end, we first derive a weak formulation and then apply the Rothe method, 
namely, first temporal discretization using finite differences, and afterward
spatial discretization based on a Galerkin finite element scheme. Due to the highly 
nonlinear behavior, we then propose and implement a 
fixed-point algorithm to solve all three equations sequentially. 
Similar algorithms and implementations are available in 
the deal.II library \cite{dealii2019design,dealII91}, and we have former experiences 
in solving highly nonlinear coupled PDE systems, e.g., \cite{Wi13_fsi_with_deal}, but 
the algorithmic design, implementation and code verification of \eqref{eq:system}
in deal.II is novel to the best of our knowledge.

\subsection{Weak formulation}
Using integration by parts and the homogeneous boundary conditions, 
the variational formulation for the system (\ref{eq:system}) reads: 
Find $u,c,p \in L^2(0, T, H^1(\Omega)) $ 
with $u_t, c_t, p_t \in L^2(0, T, (H^{1}(\Omega))^*)$ 
and the initial conditions $u^0 = u(0) \in H^1(\Omega),
c^0 = c(0) \in L^2(\Omega),p^0 = p(0) \in L^2(\Omega)$
such that for almost all times $t\in (0,T)$, we have
\begin{equation}
\label{eq3}
\begin{aligned}
& \langle u_t , \phi^u \rangle + \frac{1}{\alpha} \int_{\Omega} \nabla u \cdot \nabla \phi^u\,dx 
- \chi  \int_{\Omega} u \nabla c \cdot \nabla \phi^u \, dx 
- \mu \int_{\Omega} u(1-u)\phi^u \, dx = 0
\quad\forall \phi^u \in C^{\infty}(\overline \Omega), 
\\
& \langle c_t , \phi^c \rangle + \int_{\Omega} pc \phi^c \, dx = 0 
\quad\forall \phi^c\in C^{\infty}(\overline \Omega), 
\\
& \langle p_t , \phi^p \rangle - \varepsilon^{-1} \int_{\Omega} (uc-p) \phi^p \, dx = 0 
\quad\forall \phi^p \in C^{\infty}(\overline \Omega). 
\end{aligned}
\end{equation}

\subsection{Temporal discretization and fixed point scheme}
Let us now proceed and subdivide the time interval 
$\left[ 0, T \right]$ into $N$ subintervals 
$\left[ 0, T\right] =  \cup_{n=0}^{N-1} \left[ t^n, t^{n+1} \right]$ with 
the uniform time steps $\Delta t = t^{n+1} - t^n, n=0,1,2, ..., N-1 $. 
We use $c^{n+1}(x) \defs c(x, t^{n+1}), \, p^{n+1}(x) \defs p(x, t^{n+1})$ 
and $u^{n+1}(x) \defs u(x, t^{n+1})$ to denote the approximation of the solutions 
at time $t^{n+1}$. 
Specifically, for time discretization we employ the well-known 
$\theta$ method allowing us to work with implicit $A$-stable time-stepping 
schemes for choice $\theta \in [0.5;1]$ in each equation.
Further, a fixed-point scheme is used to decouple the previous system and to 
treat the nonlinear and coupled terms. 

Then, a semi-discrete and linearized form of the system (\ref{eq3}) 
in the interval $[t^{n}, t^{n+1}]$ reads: 
For given 
$u_0^{n+1} = u^{n} , \, c_0^{n+1} = c^{n}$ and $p_0^{n+1} = p^{n}$
find $u_k^{n+1} \in H^1(\Omega), \, c_k^{n+1} \in H^1(\Omega) $ and $p_k^{n+1} \in H^1(\Omega)$ such that 

\begin{align*}
 &\quad \int_{\Omega} u_k^{n+1} \phi^u \, dx + \theta \Delta t 
\left( \frac{1}{\alpha} \int_{\Omega} \nabla  u_{k}^{n+1} \cdot \nabla \phi^u dx  
- \chi \int_{\Omega} u_{k}^{n+1} \nabla c_{k-1}^{n+1} \cdot \nabla \phi^u dx 
- \mu \int_{\Omega} u_k^{n+1} (1 - u_{k-1}^{n+1})  \phi^u dx \right) \\ 
& = \int_{\Omega} u^{n} \phi^u \, dx - (1-\theta) \Delta t 
\left( \frac{1}{\alpha}\int_{\Omega} \nabla u^n  \cdot \nabla \phi^u dx
- \chi \int_{\Omega} u^{n} \nabla c^{n} \cdot \nabla \phi^u dx
- \mu \int_{\Omega} u^n (1 - u^n)  \phi^u dx \right) \quad\forall \phi^u\in  C^{\infty}(\overline \Omega)
\end{align*}
and 
\begin{align*}
\int_{\Omega} c_k^{n+1}  \phi^c dx   
+ \theta \Delta t  \int_{\Omega} p_{k-1}^{n+1} c_k^{n+1}  \phi^c dx 
= \int_{\Omega} c^{n}  \phi^c dx  - (1 -\theta) \Delta t 
 \int_{\Omega} p^{n}c^{n}  \phi^c dx  \quad\forall \phi^c \in  C^{\infty}(\overline \Omega) 
\end{align*}
and
\allowdisplaybreaks[0]
\begin{align*}
&\quad \left( 1 + \varepsilon ^{-1} \theta \Delta t \right) 
 \int_{\Omega} p_k^{n+1}  \phi^p dx   \\
& =\left( 1 - \varepsilon ^{-1} (1 - \theta) \Delta t \right) 
\int_{\Omega} p^{n}  \phi^p dx   +  \varepsilon ^{-1} \theta \Delta t 
 \int_{\Omega} u_{k}^{n+1} c_{k}^{n+1}  \phi^p dx +  \varepsilon ^{-1} 
(1 - \theta) \Delta t  \int_{\Omega} u^{n}c^{n}  \phi^p dx 
\quad\forall \phi^p \in  C^{\infty}(\overline \Omega) 
\end{align*}
\allowdisplaybreaks
for $k = 1, 2, \ldots, k^*$, where $k^*$ is the iteration index where 
some stopping criterion is met, and for $n=0,1,\ldots, N-1$. 
For details on the specific 
steps and stopping tolerances we refer the reader to Section~\ref{sec_algo}.

\subsection{Spatial Galerkin discretization with finite elements}
Our spatial discretization is based on a Galerkin finite 
element scheme using conforming finite elements 
(bilinear in two dimensions and trilinear in three dimensions). We refer 
the reader to the classical textbook \cite{Cia87} for more details.
To this end, $\Omega$ is decomposed into $\Omega_h$ using quadrilaterals or
hexahedra. Then, a conforming subspace $V_h\subset H^{1}(\Omega)$ for
approximating $u_h^{n+1}, c_h^{n+1}$ and $p_h^{n+1}$ is designed, 
which is composed of $Q_1^c$ functions. Moreover, we denote by $(\cdot,\cdot)$ 
the scalar product in $L^2(\Omega)$.

The discrete solutions $u_h^{n+1}, c_h^{n+1}$ and $p_h^{n+1}$ 
are written as linear combinations of standard basis functions of $V_h$:
$$ u_h^{n+1}(x) = \sum\limits_{i=1}^M u_{i}^{n+1} \phi_i(x), \quad
   c_h^{n+1}(x) = \sum\limits_{i=1}^M c_{i}^{n+1} \phi_i(x),\quad
   p_h^{n+1}(x) = \sum\limits_{i=1}^M p_{i}^{n+1}\phi_i(x), 
$$
where $M$ denotes the number of spatial degrees of freedom,
i.e., $\dim V_h =  M$.
The fully discrete system then reads as follows:

\begin{align} 
&\quad\sum \limits_{i=1}^M \left[ (\phi_i , \phi_j) + \theta \Delta t \left(
\frac{1}{\alpha} (\nabla \phi_i,\nabla \phi_j) - \chi (\phi_i \nabla
c_{h,k-1}^{n+1}, \nabla \phi_j) - \mu (\phi_i (1 - u_{h,k-1}^{n+1}) , \phi_j) \right) \right] u_{i,k}^{n+1} \nonumber \\
&= \sum \limits_{i=1}^M \left[ (\phi_i , \phi_j) - (1 -\theta) \Delta t \left( \frac{1}{\alpha} (\nabla \phi_i,\nabla \phi_j) - \chi (\phi_i \nabla c_{h}^{n}, \nabla \phi_j) - \mu (\phi_i (1 - u_{h}^{n}) , \phi_j) \right)  \right] u_{i}^{n}, \label{eq5.1}\\
&\text{and} \nonumber \\
&\quad \sum \limits_{i=1}^M \left[ (\phi_i , \phi_j) 
+ \theta \Delta t ( p_{h,k-1}^{n+1} \phi_i, \phi_j) \right] c_{i,k}^{n+1}  
= \sum \limits_{i=1}^M \left[ (\phi_i , \phi_j) - (1 - \theta) \Delta t ( p_h^{n} \phi_i, \phi_j) \right] c_{i}^{n}, \label{eq5.2}\\
&\text{and} \nonumber \\
&\quad\sum \limits_{i=1}^M \left[ (1 + \varepsilon^{-1} \theta \Delta t)(\phi_i , \phi_j) \right] p_{i,k}^{n+1} \nonumber \\
&= \sum \limits_{i=1}^M \left[ (1 - \varepsilon^{-1} (1 - \theta) \Delta t)(\phi_i
, \phi_j)  \right] p_{i}^{n} + \varepsilon^{-1} \theta \Delta t ( u_{h,k}^{n+1}
c_{h,k}^{n+1}, \phi_j) + \varepsilon^{-1} (1 -\theta) \Delta t ( u_{h}^{n}
c_{h}^{n}, \phi_j),
\label{eq5.3}
\end{align}
where $j=1,\ldots,M$ and the unknown solution coefficients
at each fixed-point iteration $k$ and each time step $n+1$ are 
$u_{h,k}^{n+1} \defs \{u_{i,k}^{n+1}\}_{i=1}^M \in\mathbb{R}^M$, 
$c_{h,k}^{n+1} \defs \{c_{i,k}^{n+1}\}_{i=1}^M\in\mathbb{R}^M$
and $p_{h,k}^{n+1} \defs \{p_{i,k}^{n+1}\}_{i=1}^M\in\mathbb{R}^M$.
Each linear system is solved with a sparse direct solver.

\subsection{Algorithm}
\label{sec_algo}
Collecting all pieces from the previous subsections, we arrive 
at the following final algorithm.

\begin{algo}[Fixed-point iterative scheme]
Let the fully discrete form \eqref{eq5.1}--\eqref{eq5.3} be given.
\begin{enumerate}
\item[Step 1]: initialize at time $t=0$ for $n=0$ 
with $u_h^0 =i_h\,u_0$, $c_h^0 = i_h\,c_0$ and $p_h^0 = i_h\,p_0$, where $i_h$
is the standard Lagrange interpolation operator,
\item[Step 2]: for $n\geq 0$ (time step number index)\\
        set $u_{h,0}^{n+1} = u_h^{n}, c_{h,0}^{n+1} = c_h^{n}$ and 
$p_{h,0}^{n+1} = p_h^{n}$\\
        for $k\geq 1$ (fixed-point iteration index)
        \begin{itemize}
 \item[(a)] Given $u_h^{n},c_h^{n}$ and 
$u_{h,k-1}^{n+1},c_{h,k-1}^{n+1}$. Determine $u_{h,k}^{n+1}$ with 
 \eqref{eq5.1}.
\item[(b)] Given $p_h^{n}$ and 
$p_{h,k-1}^{n+1}$. Determine $c_{h,k}^{n+1}$ with 
 \eqref{eq5.2}.
\item[(c)] Given $u_h^{n},c_h^{n},p_h^{n}$ and 
$u_{h,k}^{n+1},c_{h,k}^{n+1}$. Determine $p_{h,k}^{n+1}$ with 
 \eqref{eq5.2}.
        \item[(d)] if $\left\lbrace 
\Vert u_{h,k}^{n+1} - u_{h,k-1}^{n+1}\Vert_{l^2},
\Vert c_{h,k}^{n+1} - c_{h,k-1}^{n+1}\Vert_{l^2}, 
\Vert p_{h,k}^{n+1} - p_{h,k-1}^{n+1}\Vert_{l^2} 
\right\rbrace  < Tol=10^{-8} $ stop and set 
$$u_h^{n+1} = u_{h,k}^{n+1}, \, c_h^{n+1} = c_{h,k}^{n+1}, \, p_h^{n+1} = p_{h,k}^{n+1},$$
increment $n\mapsto n+1$ and go back to step 2 (proceed to next time point)
\item[(e)] else set
 $$u_{h,k}^{n+1} = \beta u_{h,k}^{n+1} + (1-\beta)u_{h,k-1}^{n+1}, $$
 $$c_{h,k}^{n+1} = \beta c_{h,k}^{n+1} + (1-\beta)c_{h,k-1}^{n+1}, $$
 $$p_{h,k}^{n+1} = \beta p_{h,k}^{n+1} + (1-\beta)p_{h,k-1}^{n+1}, $$
   for some $\beta \in [0, 1]$ and go to (a) 
and increment $k\mapsto k+1$ (next fixed-point iteration);
here we set $\beta = 0.5$.          
\end{itemize}
\end{enumerate}
\end{algo}
\begin{remark}
The system of algebraic equations of each equation at each step is solved 
using the sparse direct solver UMFPACK \cite{DaDu97}.
\end{remark}
\begin{remark}
We notice that the relaxation parameter 
$\beta$ can also be obtained via a backtracking procedure starting 
with $\beta=1$ and constructing a sequence with $\beta\to 0 $ for $k\to\infty$
until convergence is achieved.
\end{remark}
\begin{remark}
A rigorous numerical convergence analysis in weak function spaces of the discretized equations 
for $\Delta t\to 0$ and $h\to 0$ towards their continuous limits exceed the purpose 
of this paper and is left for future studies. However, there is hope 
for convergence in light of the classical solutions obtained in Section~\ref{sec_existence_classical}.
\end{remark}


\section{Numerical simulations}
\label{sec_tests}
In this section, we perform several numerical simulations in 
two and three spatial dimensions. The 
main objective are investigations of the influence of variations in the 
proliferation coefficient $\mu$
and the haptotactic coefficient $\chi$.

\subsection{Geometry, final time, parameters, and initial conditions} 
For all the experiments except those in Subsection~\ref{sec:3d}, the 
computations are performed on the 
square domain $\Omega = (0, 20)^2$, 
discretized uniformly using quadrilateral elements. 
This mesh is uniformly refined 5 times at the beginning of the computation 
resulting into $1089$ degrees of 
freedom.
The final time is $T=50$, we set $\theta = 0.5$ and use 
as time step size $\Delta t = 1$. As initial conditions, we use 
$$u_0(x) = \exp (-x^2), 
\quad  c_0(x) = 1 - 0.5 \exp (-x^2), \quad p_0(x) = 0.5 \exp (-x^2),$$
in all computations, unless otherwise mentioned.
As parameters, we use the fixed values $\alpha = 10$ and  $\varepsilon = 0.2$, while $\mu$ and $\chi$ are varied and specified in each respective 
subsection below. We notice that the smoothness conditions 
on the domain and satisfaction of the boundary conditions at the initial 
time $t=0$ are violated in this section in comparison to our theory 
established in Section~\ref{sec_existence_classical}.

\subsection{Simulations for different proliferation coefficients \tops{$\mu$}{mu}}
 First, we study the influence of the cancer cell proliferation 
coefficient on the cancer invasion for $\mu = 10^{-10}$, $0.5$, $1.0$ with small haptotactic rate $\chi=0.01$. 
We notice that our theory in Section~\ref{sec_existence_classical}
requires $\mu>0$ and for this reason we made the previous choice
$\mu=10^{-10}$. Numerically we are interested in a value 
being close to zero in order to study the behavior of the cancer invasion 
model.
Proliferation shows the ability of a cancer cell to copy its DNA and divide
into 2 cells, therefore an increase in the proliferation rate of tumours causes
an accelerated invasion of cancer cells into connective tissues domain. In all
the computations we use $\alpha^{-1} = 0.1$, $\varepsilon = 0.2$.

The results obtained with the standard Galerkin discretization of the system
(\ref{eq:system}) are displayed in Figs.~\ref{fig1}--\ref{fig6}, at time instances
5, 15, 25 and 35. The snapshots of cancer cell invasion, connective tissue and
protease are plotted in Fig.~\ref{fig1}, Fig.~\ref{fig3} and Fig.~\ref{fig5}.
We start with $\mu = 10^{-10}$, that is, almost no growth in the cancer cell
density. As we can see from Fig.~\ref{fig1}, there is no growth in the cancer during the initial stage, and despite a small amount of concentration at the initial period, the cancer cell density and also protease (which is produced by cancer cells upon contact with connective tissues) are decreased and spread slowly due to diffusion effect and the invasion does not continue after time $t=15$. Now, let us consider $\mu = 0.5$. 
As we can see from Fig.~\ref{fig3}, the growth starts at $t=15$, and it
continues during the time. The cancer invasion gradually increases and degrades
nearly half of the connective tissue by the time $t=25$. Fig.~\ref{fig5} shows
the growth effect at the initial stage itself, due to high proliferation rate,
cancer cells produce more protease, which helps them to invade the connective
tissues area rapidly. In particular, cancer cells complete invasion in three-quarters of the connective tissue domain at $t=25$ when $\mu = 1.0$ is used.
The snapshots of cancer cell invasion for different values of proliferation
rate are given in Fig.~\ref{fig2}, Fig.~\ref{fig4} and Fig.~\ref{fig6}. As 
explained, by increasing the value of $\mu$, cancer cells increase and the invasion happens more rapidly for all the considered time intervals.\\ 
\begin{figure}[H]
	\centering
	\begin{subfigure}{0.24\textwidth}
		\includegraphics[width=\textwidth]{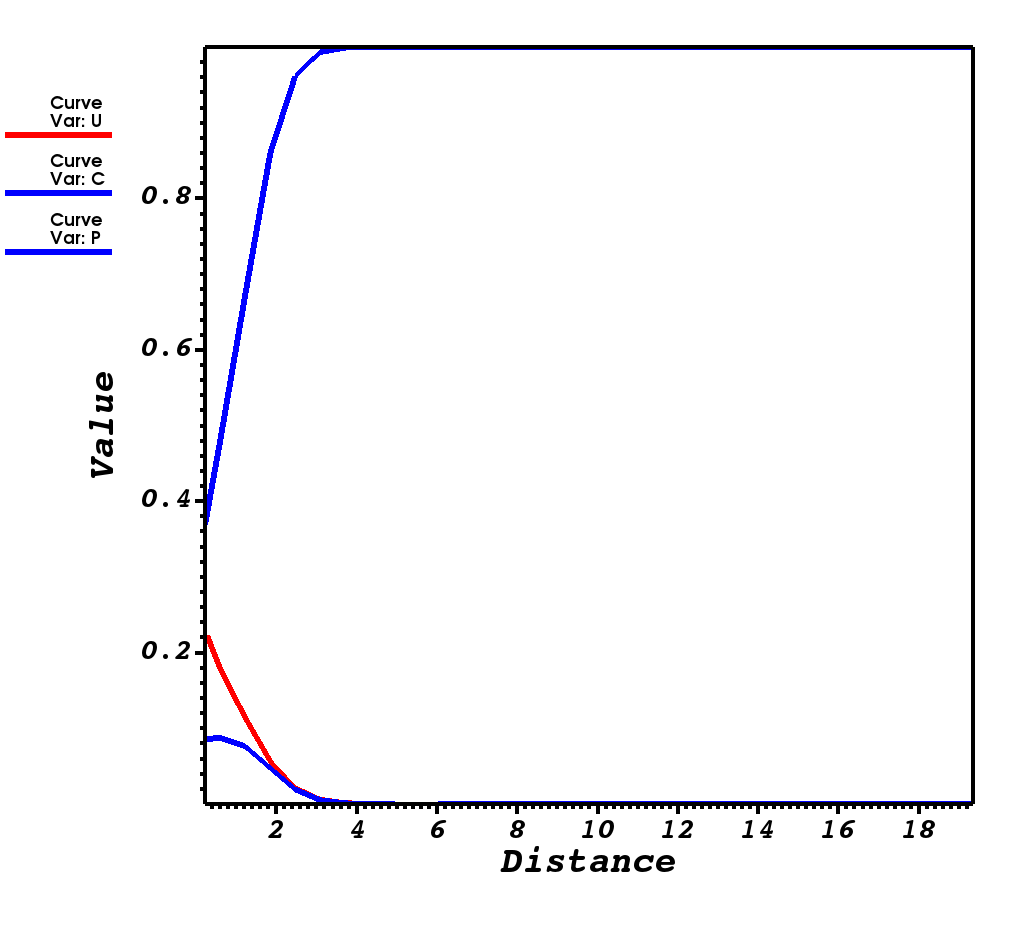}
		\caption{s = 5}
	\end{subfigure}
	\begin{subfigure}{0.24\textwidth}
		\includegraphics[width=\textwidth]{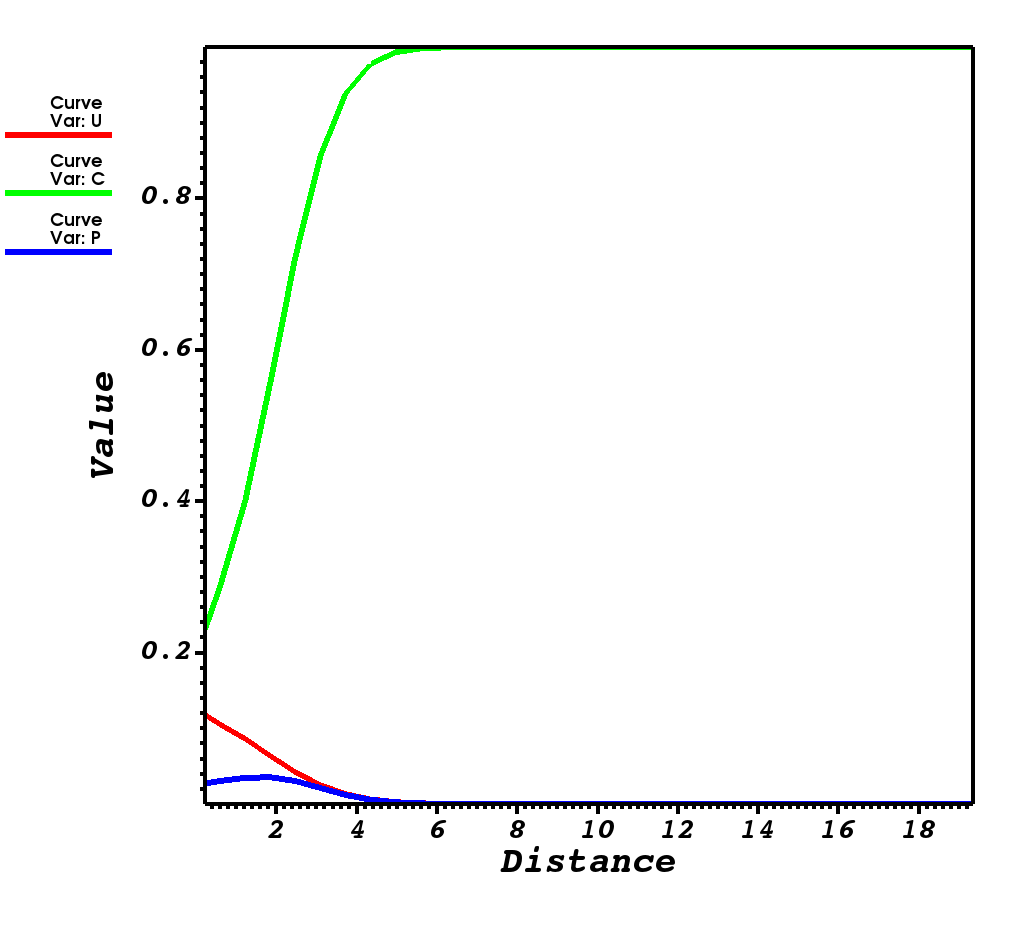}
		\caption{t = 15}
	\end{subfigure}
	\begin{subfigure}{0.24\textwidth}
		\includegraphics[width=\textwidth]{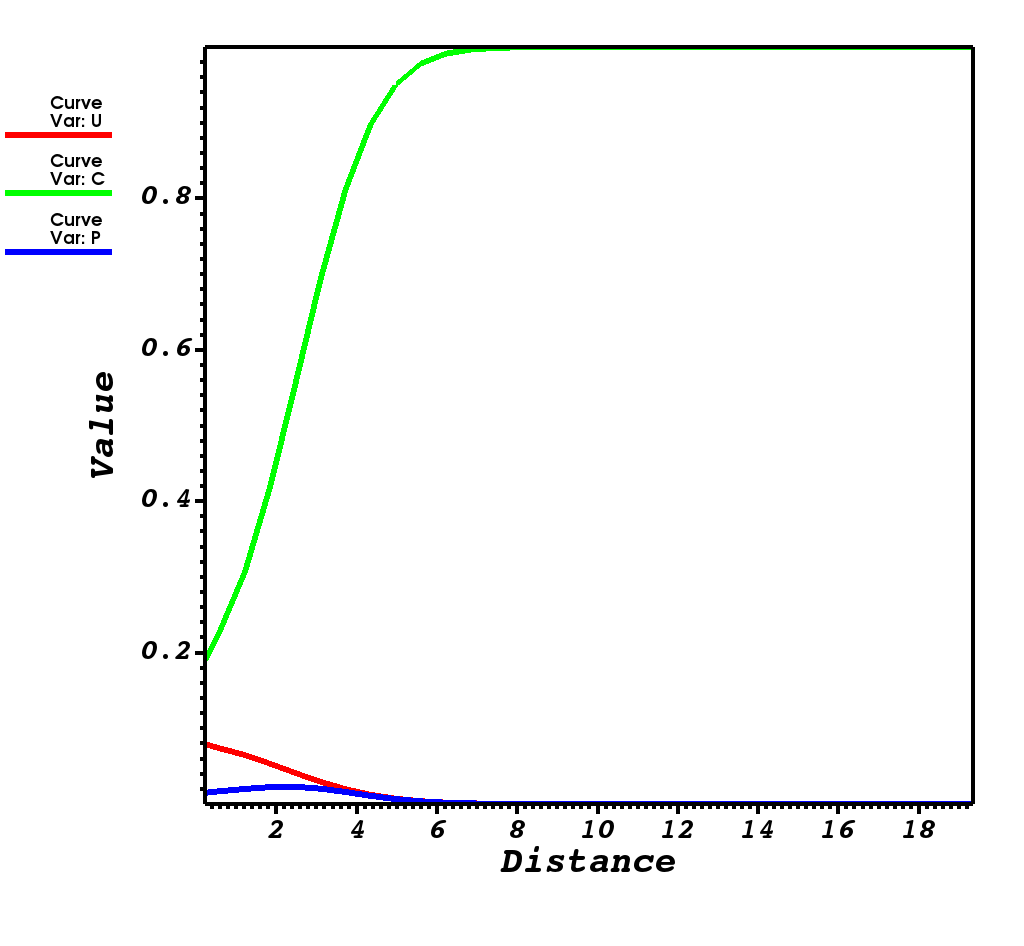}
		\caption{t = 25}
	\end{subfigure}
	\begin{subfigure}{0.24\textwidth}
		\includegraphics[width=\textwidth]{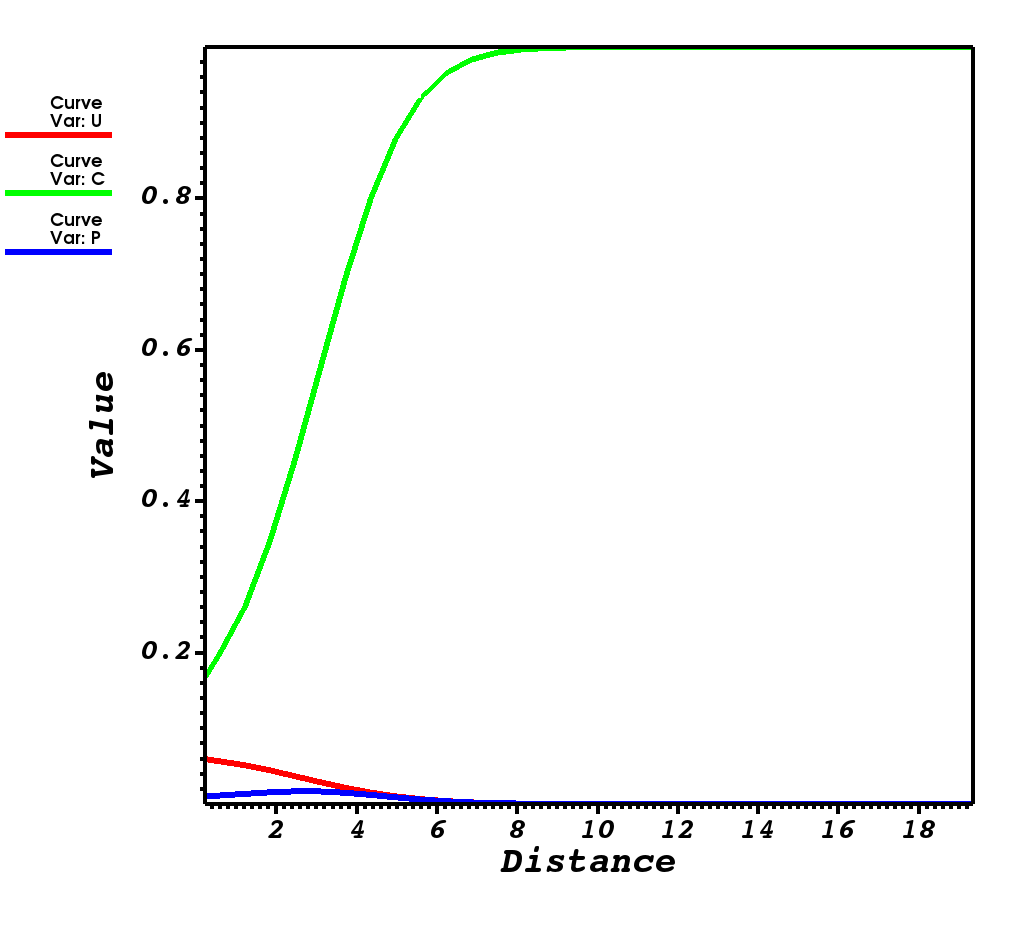}
		\caption{t = 35}
	\end{subfigure}
	\caption{
	The effect of the proliferation rate on cancer cell invasion $u$, 
connective tissue $c$ and protease $p$ at different time steps, $t=5, 15, 25, 35$ for  $\mu = 10^{-10}$.
	 }
	\label{fig1}
\end{figure} 

\begin{figure}[H]
	\centering
	\begin{subfigure}{0.24\textwidth}
		\includegraphics[width=\textwidth]{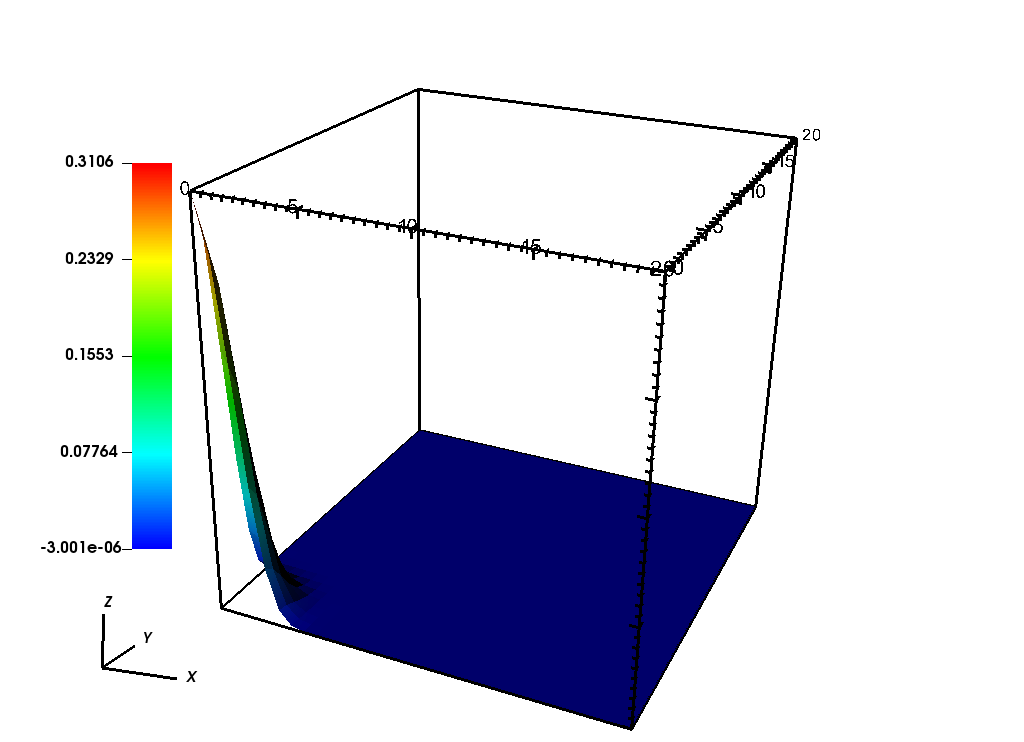}
		\caption{t = 5}
	\end{subfigure}
	\begin{subfigure}{0.24\textwidth}
		\includegraphics[width=\textwidth]{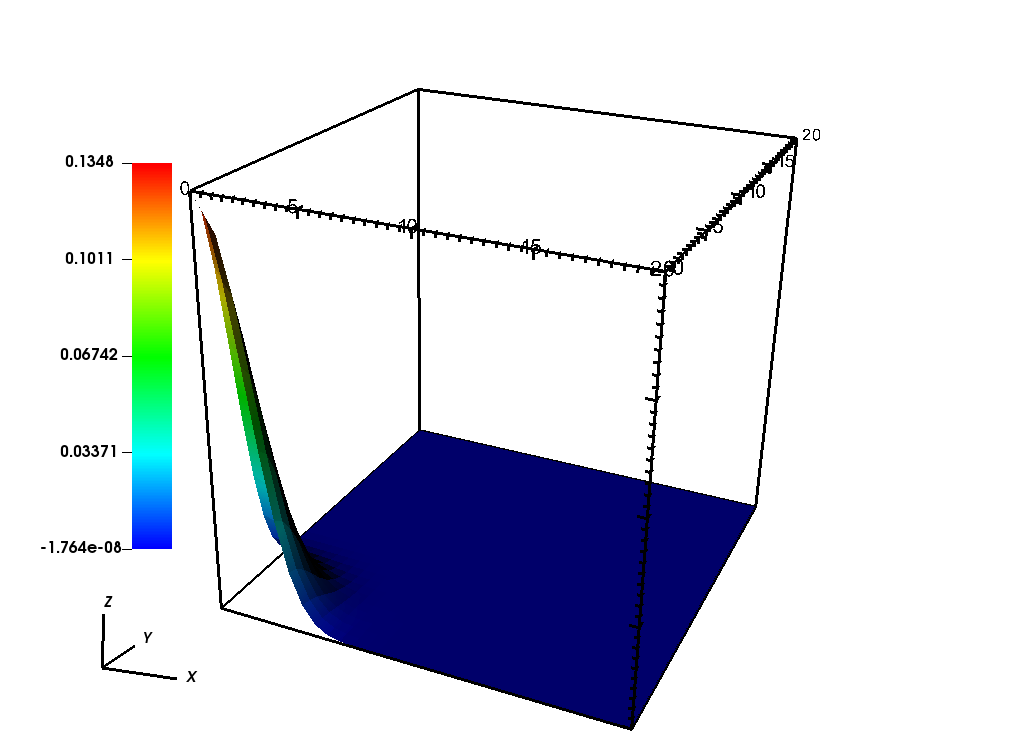}
		\caption{t = 15}
	\end{subfigure}
	\begin{subfigure}{0.24\textwidth}
		\includegraphics[width=\textwidth]{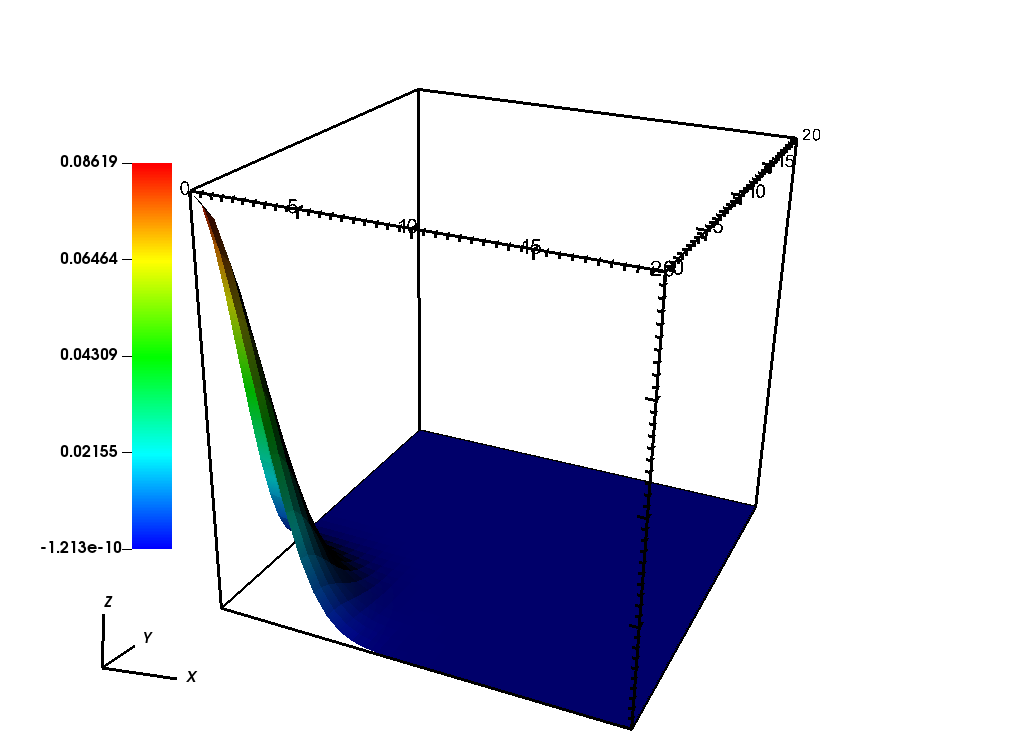}
		\caption{t = 25}
	\end{subfigure}
	\begin{subfigure}{0.24\textwidth}
		\includegraphics[width=\textwidth]{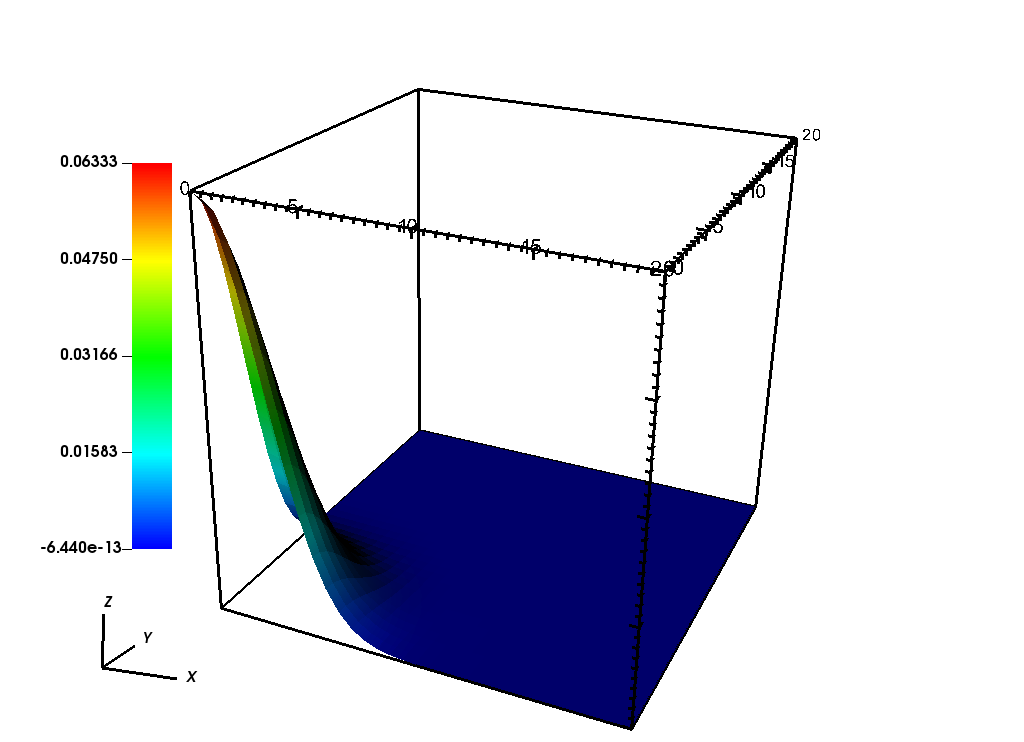}
		\caption{t = 35}
	\end{subfigure}
	\caption{
	The snapshots of cancer cell invasion $u$ for $ \mu = 10^{-10}$, the maximum amount of cancer cells decreasing from left to right is 0.3106, 0.1348, 0.08619, and 0.06333.
	 }
	\label{fig2}
\end{figure} 
 
\begin{figure}[H]
	\centering
	\begin{subfigure}{0.24\textwidth}
		\includegraphics[width=\textwidth]{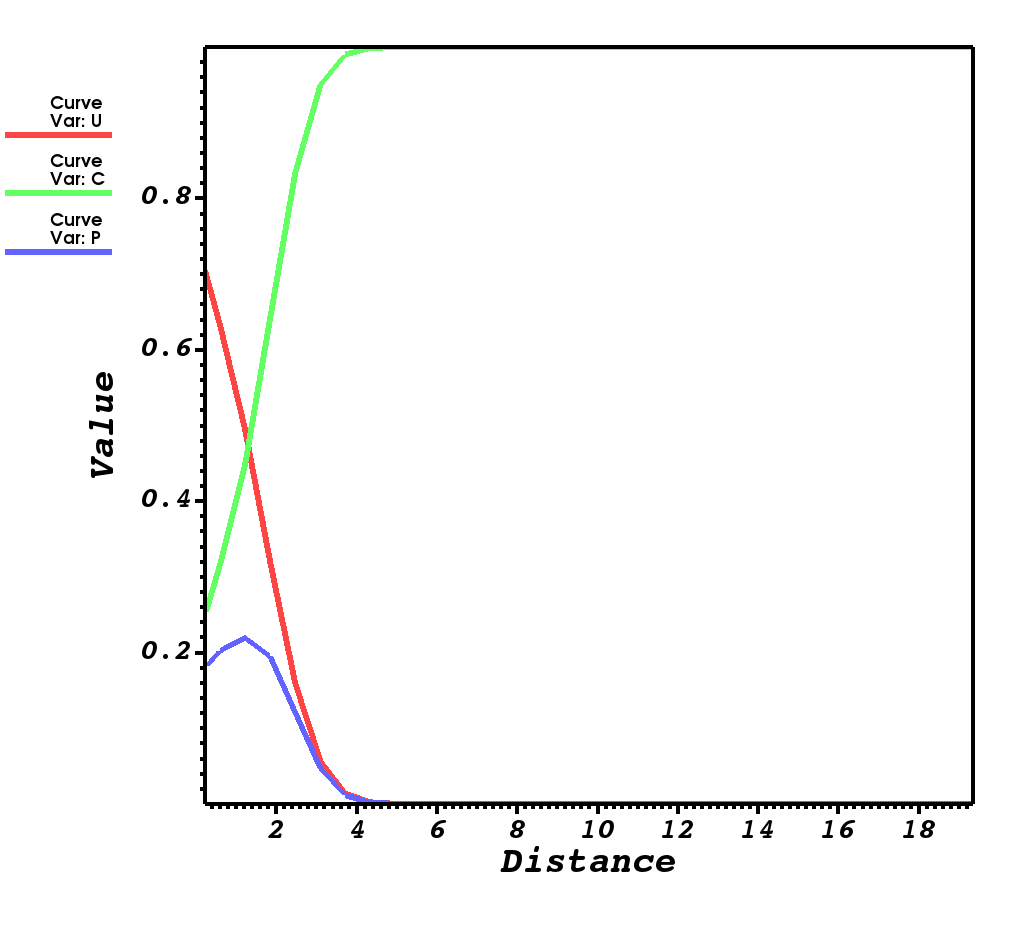}
		\caption{t = 5}
	\end{subfigure}
	\begin{subfigure}{0.24\textwidth}
		\includegraphics[width=\textwidth]{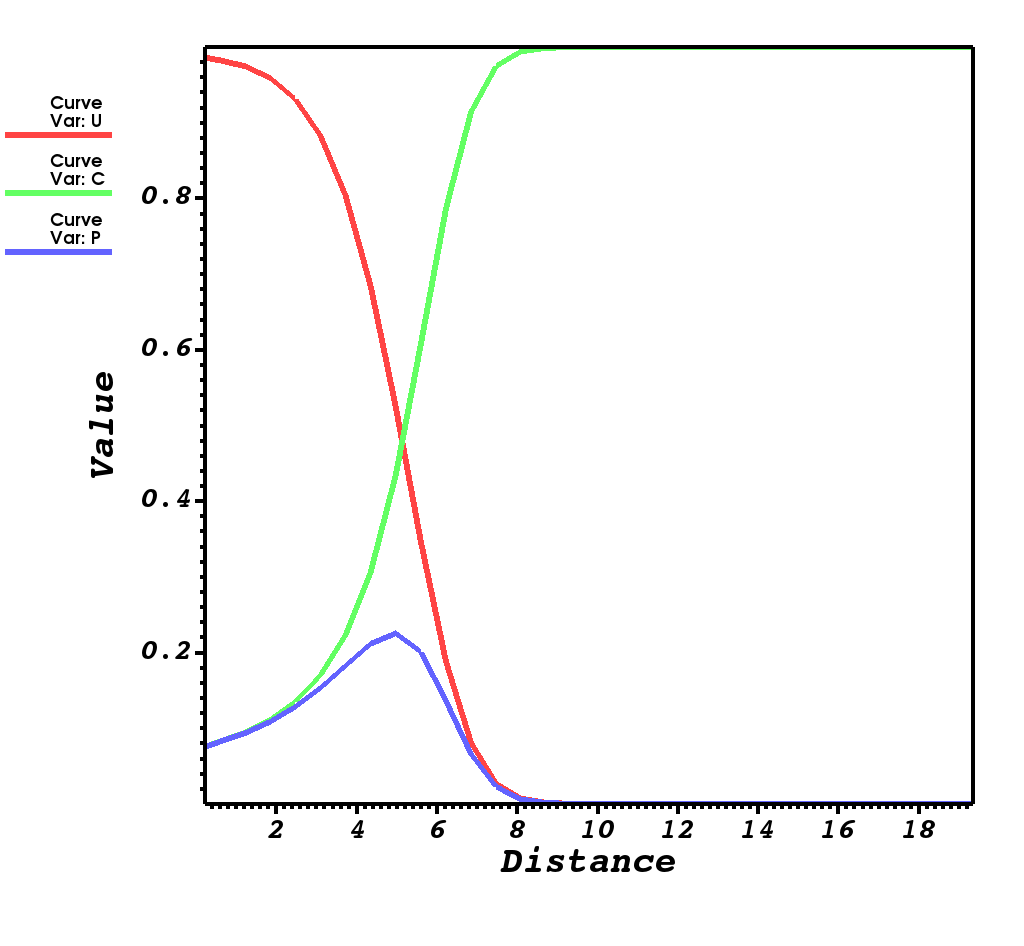}
		\caption{t = 15}
	\end{subfigure}
	\begin{subfigure}{0.24\textwidth}
		\includegraphics[width=\textwidth]{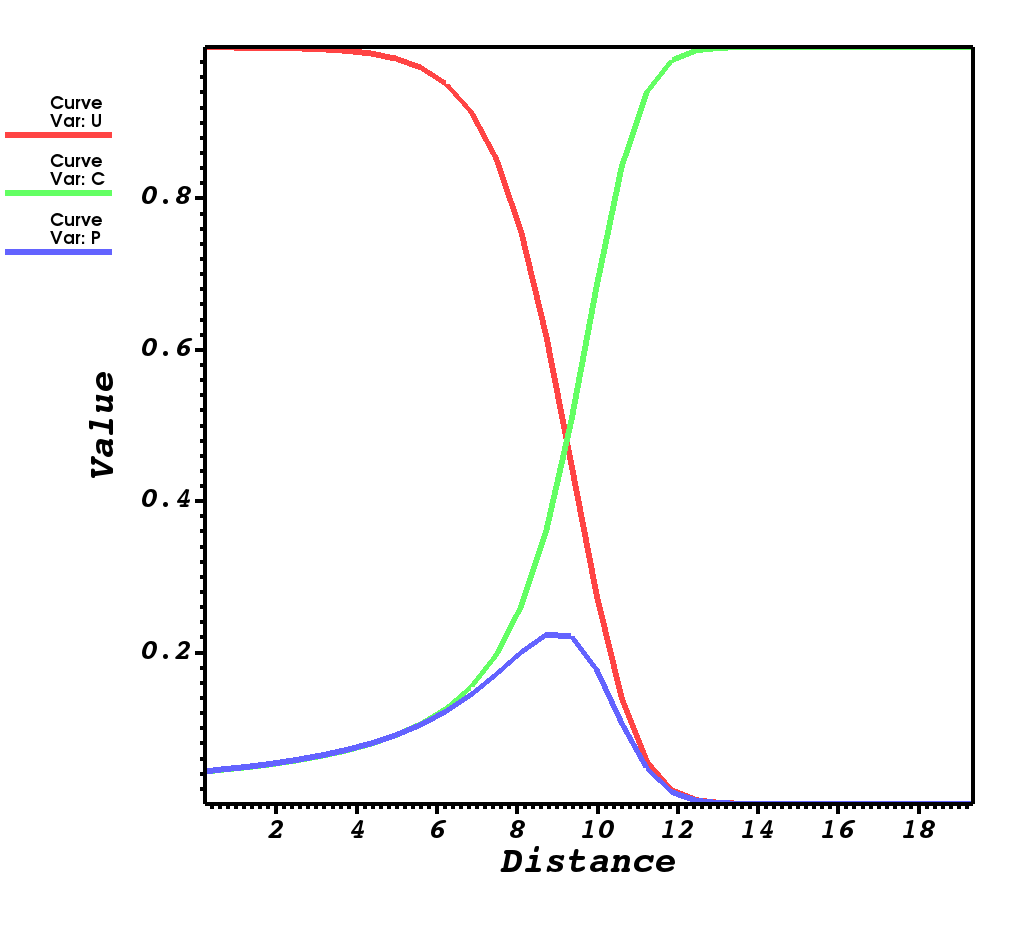}
		\caption{t = 25}
	\end{subfigure}
	\begin{subfigure}{0.24\textwidth}
		\includegraphics[width=\textwidth]{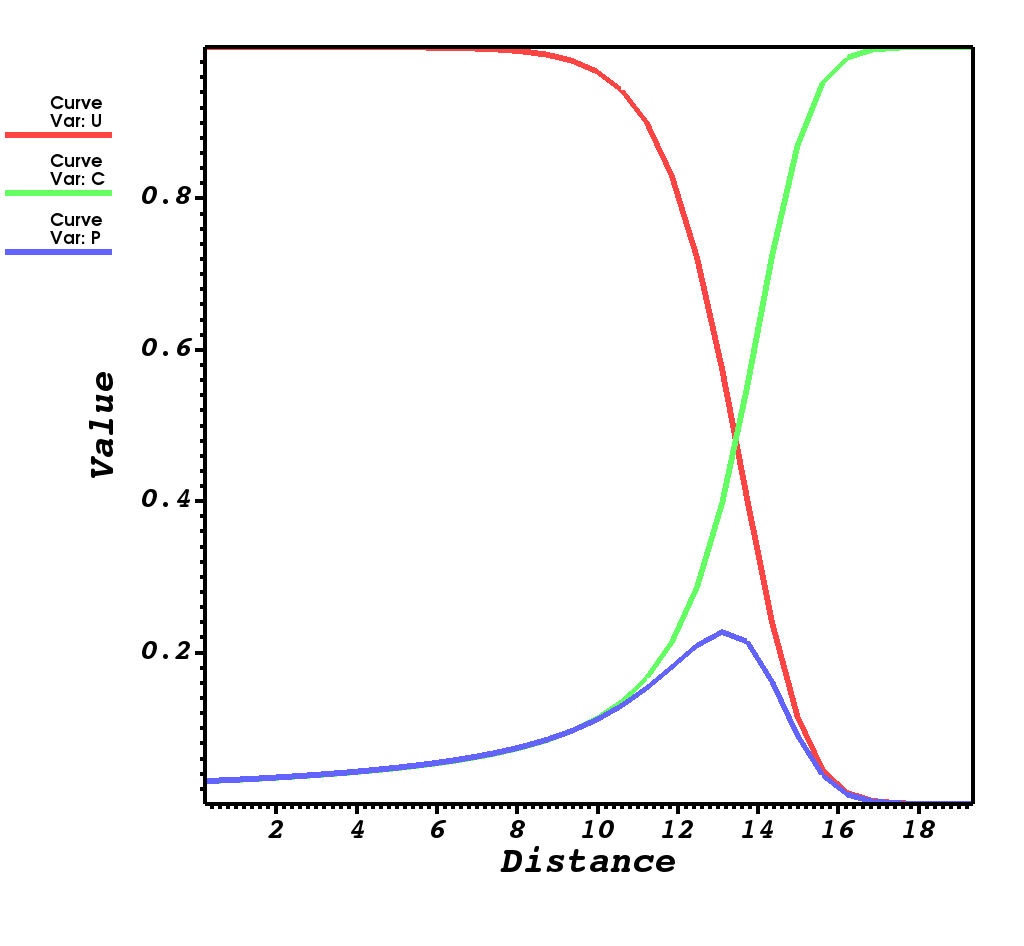}
		\caption{t = 35}
	\end{subfigure}
	\caption{
	The effect of proliferation rate on cancer cell invasion $u$, connective tissue $c$ 
and protease $p$ at different time steps, $t=5, 15, 25, 35$ for  $\mu = 0.5$.
	 }
	\label{fig3}
\end{figure} 

\begin{figure}[H]
	\centering
	\begin{subfigure}{0.24\textwidth}
		\includegraphics[width=\textwidth]{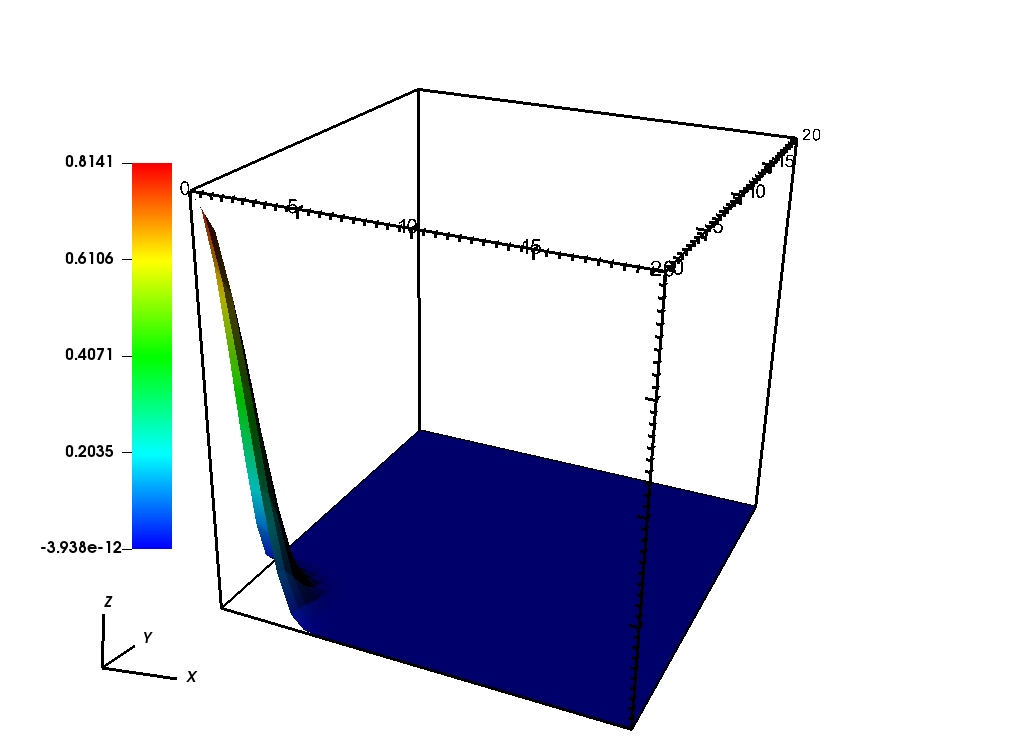}
		\caption{t = 5}
	\end{subfigure}
	\begin{subfigure}{0.24\textwidth}
		\includegraphics[width=\textwidth]{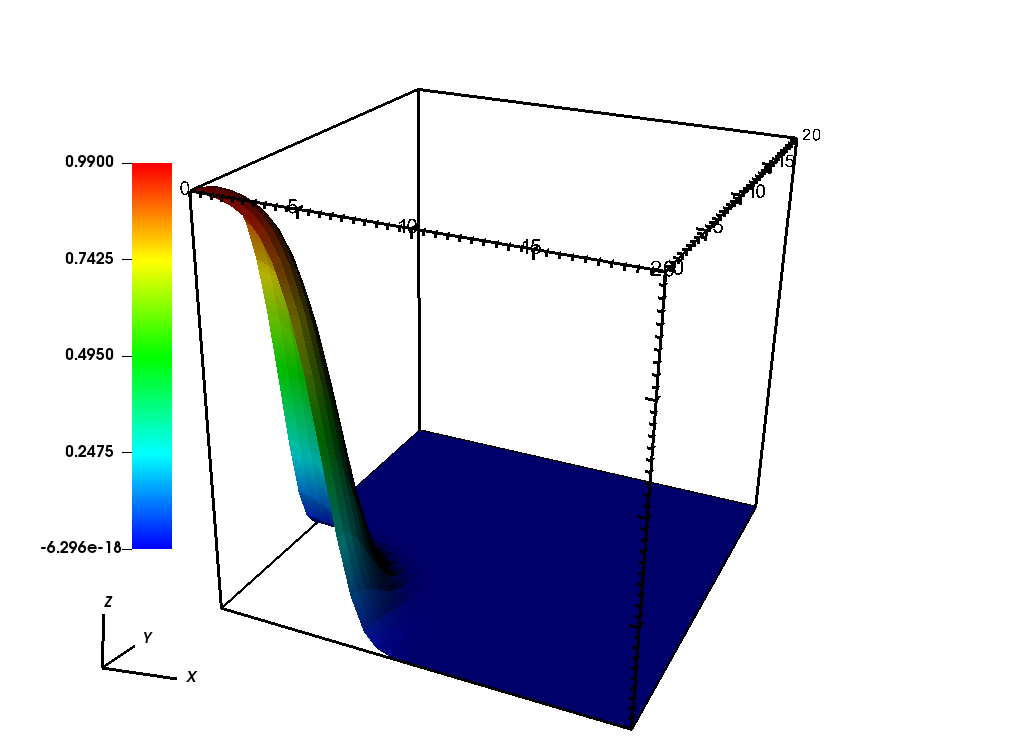}
		\caption{t = 15}
	\end{subfigure}
	\begin{subfigure}{0.24\textwidth}
		\includegraphics[width=\textwidth]{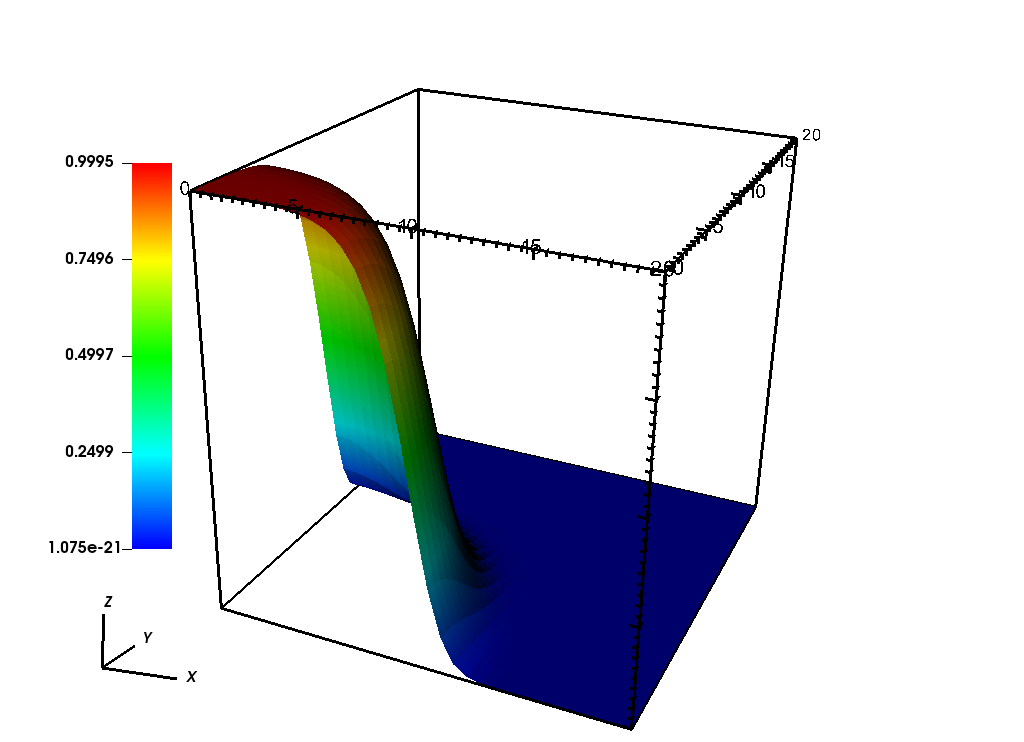}
		\caption{t = 25}
	\end{subfigure}
	\begin{subfigure}{0.24\textwidth}
		\includegraphics[width=\textwidth]{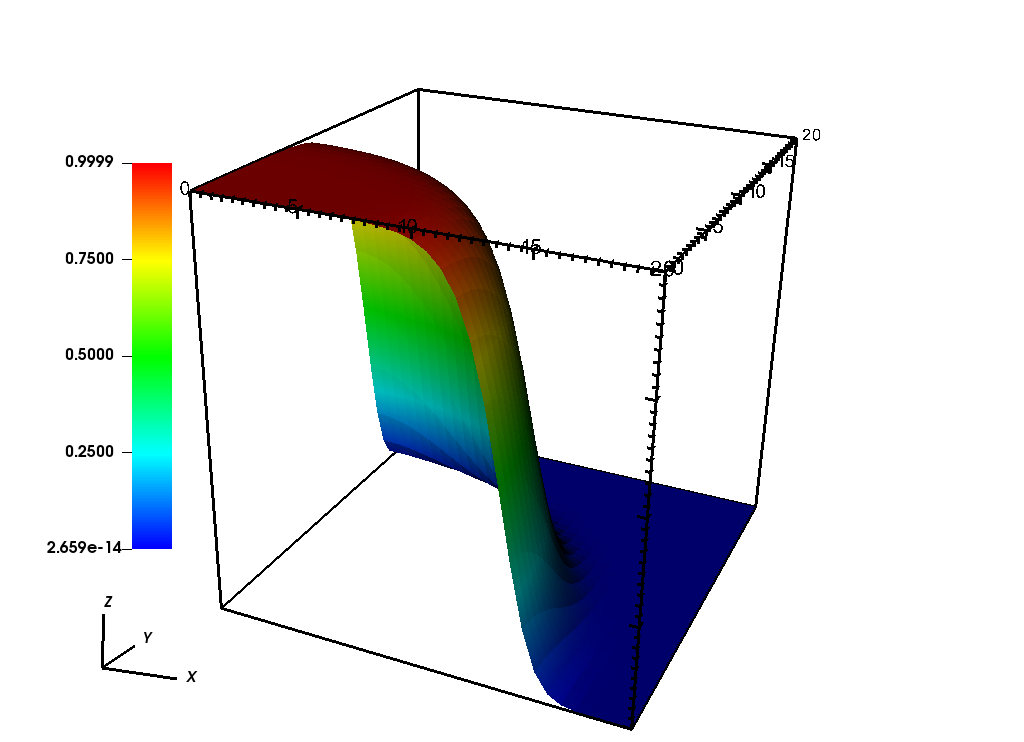}
		\caption{t = 35}
	\end{subfigure}
		\caption{
	The snapshots of cancer cell invasion $u$ for $\mu = 0.5$.
	 }
	\label{fig4}
\end{figure} 
 
\begin{figure}[H]
	\centering
	\begin{subfigure}{0.24\textwidth}
		\includegraphics[width=\textwidth]{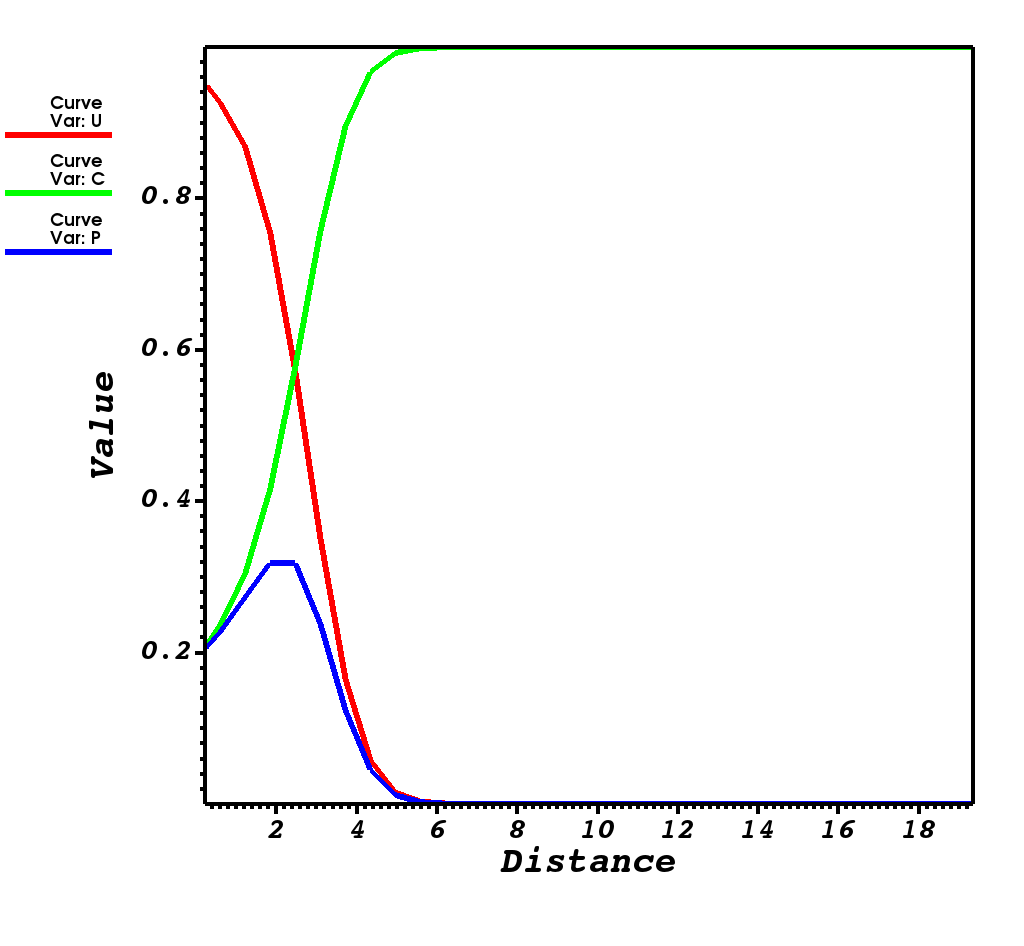}
		\caption{t = 5}
	\end{subfigure}
	\begin{subfigure}{0.24\textwidth}
		\includegraphics[width=\textwidth]{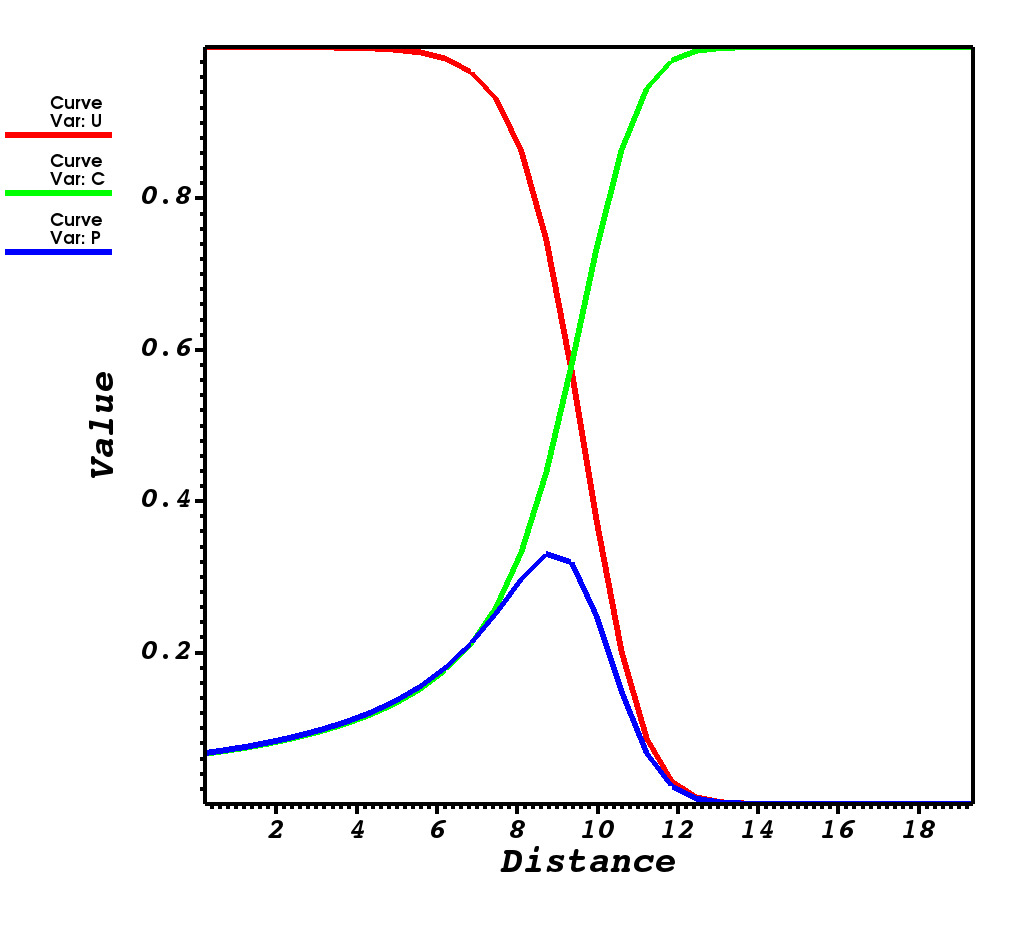}
		\caption{t = 15}
	\end{subfigure}
	\begin{subfigure}{0.24\textwidth}
		\includegraphics[width=\textwidth]{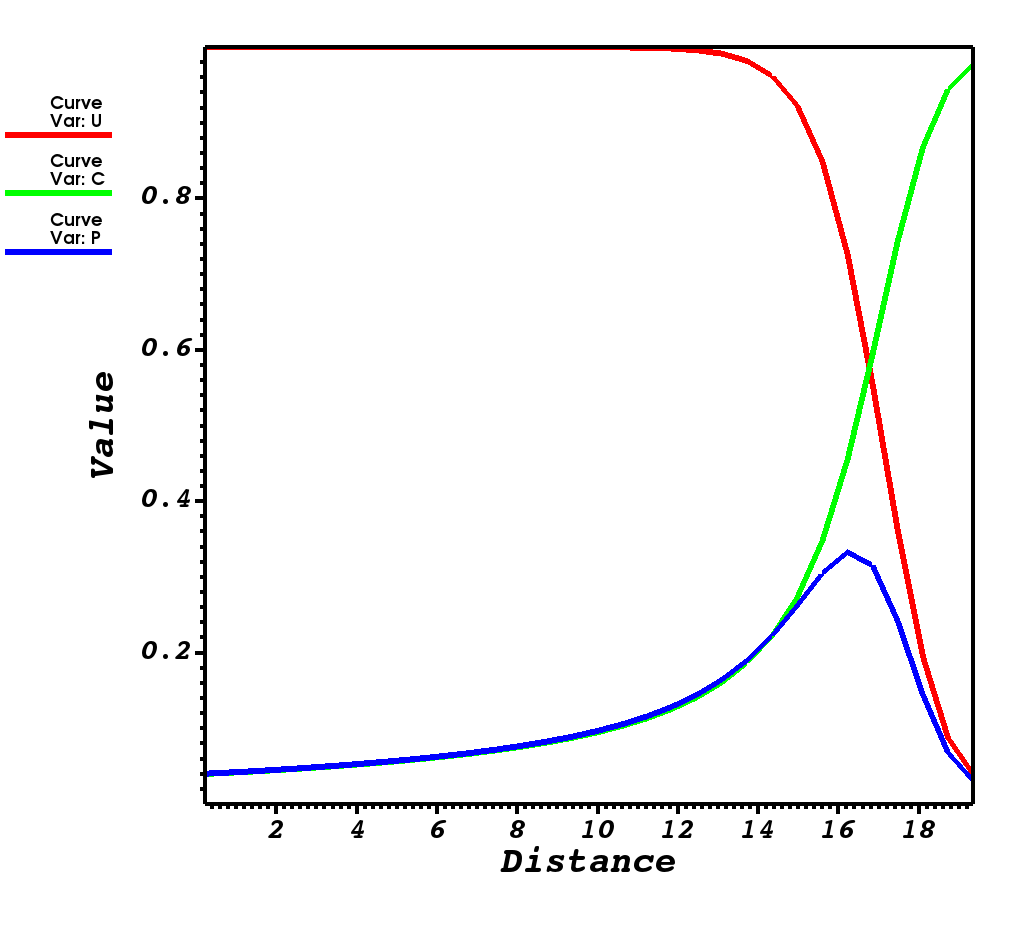}
		\caption{t = 25}
	\end{subfigure}
	\begin{subfigure}{0.24\textwidth}
		\includegraphics[width=\textwidth]{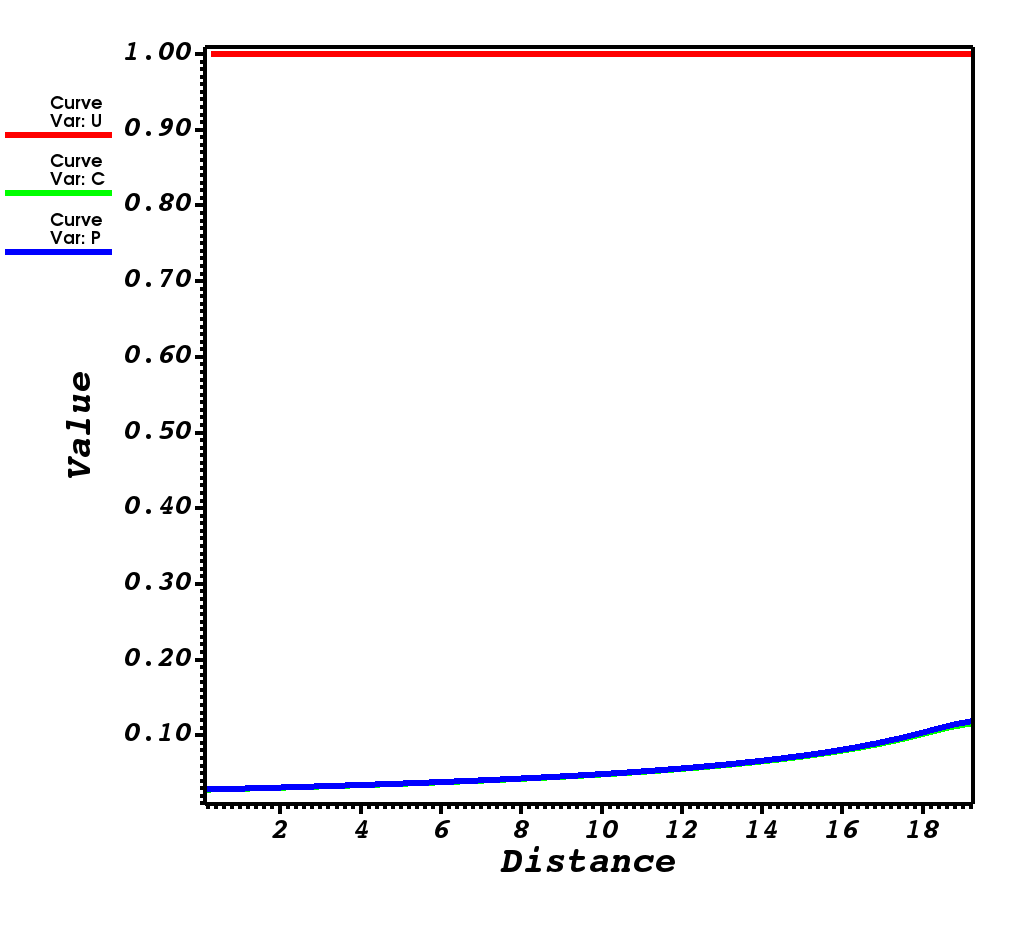}
		\caption{t = 35}
	\end{subfigure}
	\caption{
	The effect of proliferation rate on cancer cell invasion, connective tissue and protease at different time steps, $t=5, 15, 25, 35$ for  $\mu = 1.0$.
	 }
	\label{fig5}
\end{figure} 

\begin{figure}[H]
	\centering
	\begin{subfigure}{0.24\textwidth}
		\includegraphics[width=\textwidth]{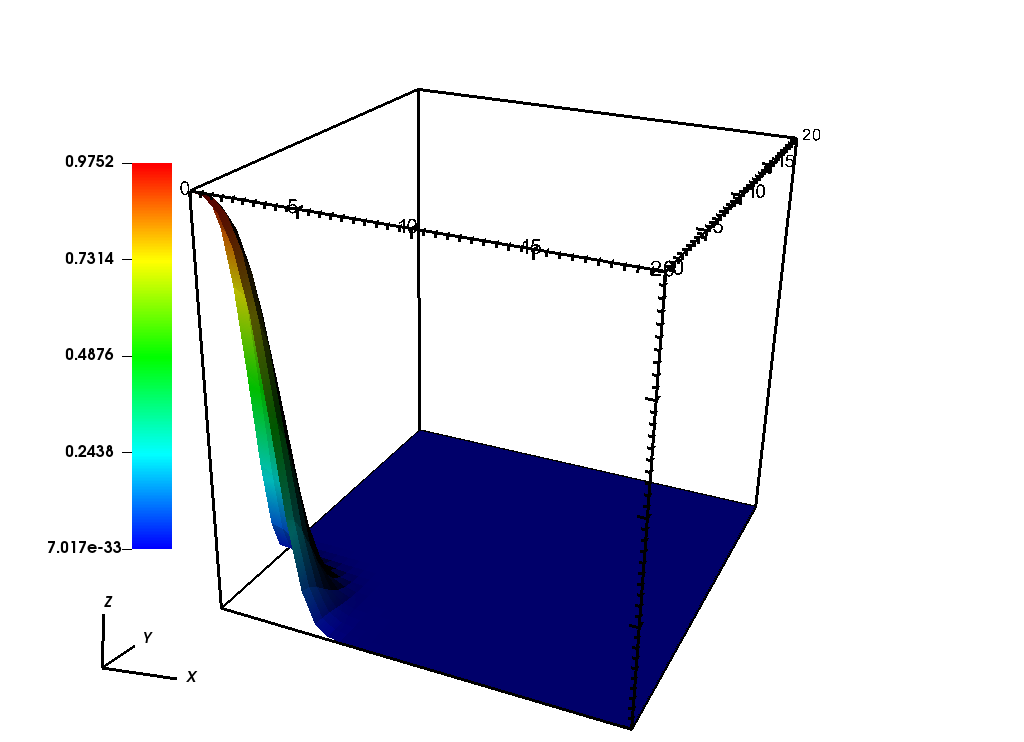}
		\caption{t = 5}
	\end{subfigure}
	\begin{subfigure}{0.24\textwidth}
		\includegraphics[width=\textwidth]{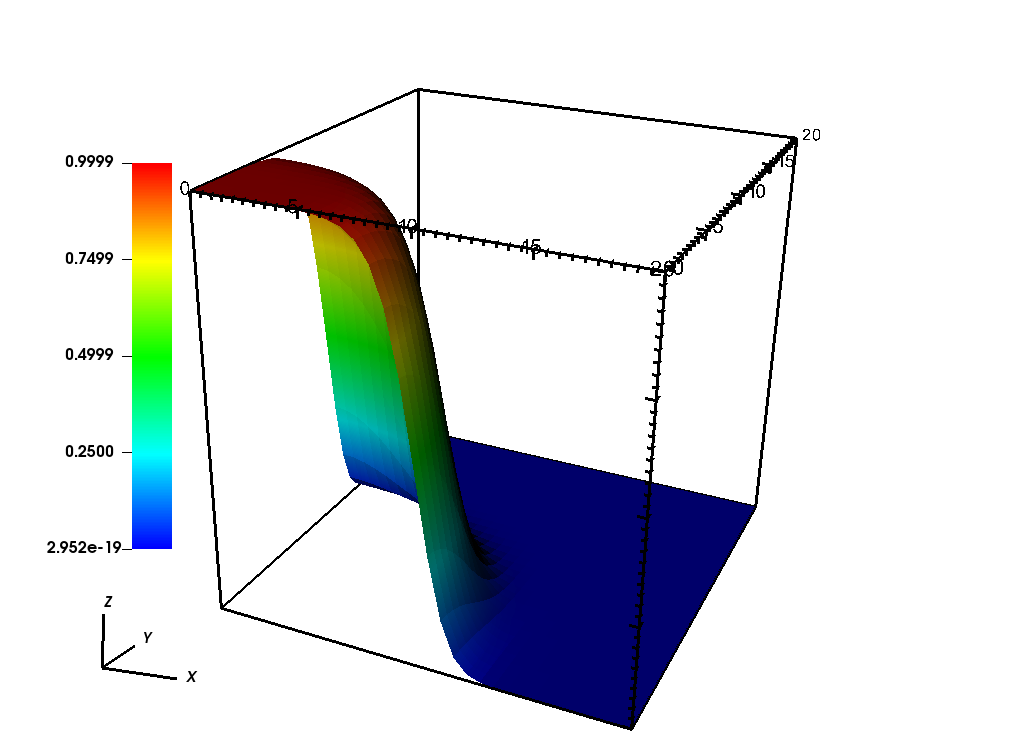}
		\caption{t = 15}
	\end{subfigure}
	\begin{subfigure}{0.24\textwidth}
		\includegraphics[width=\textwidth]{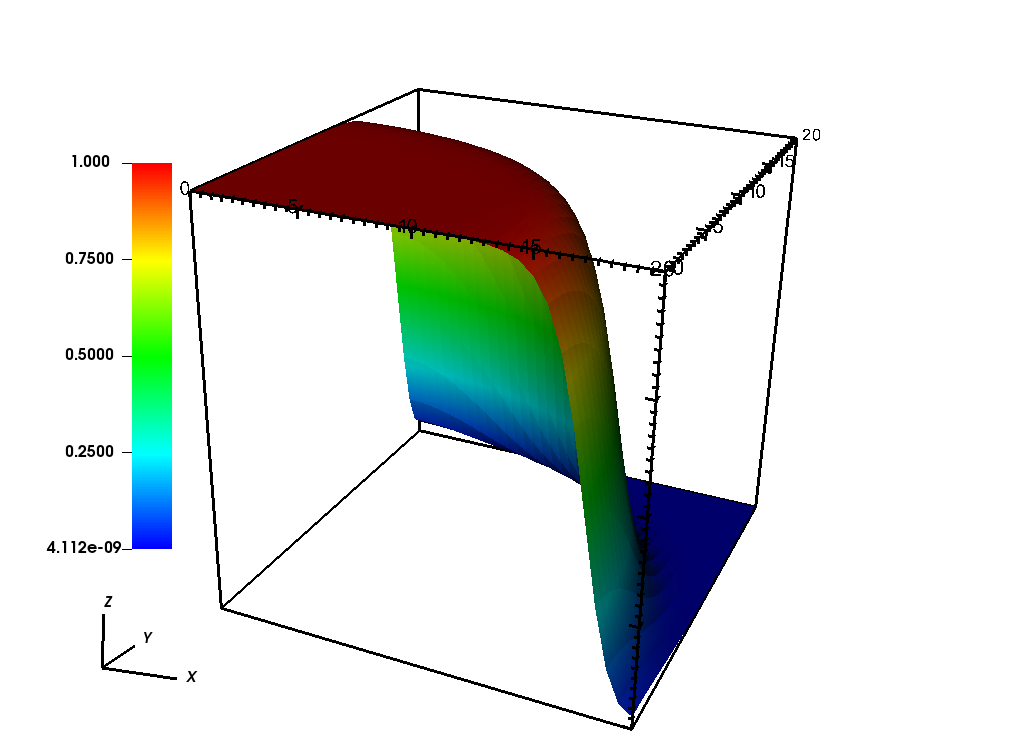}
		\caption{t = 25}
	\end{subfigure}
	\begin{subfigure}{0.24\textwidth}
		\includegraphics[width=\textwidth]{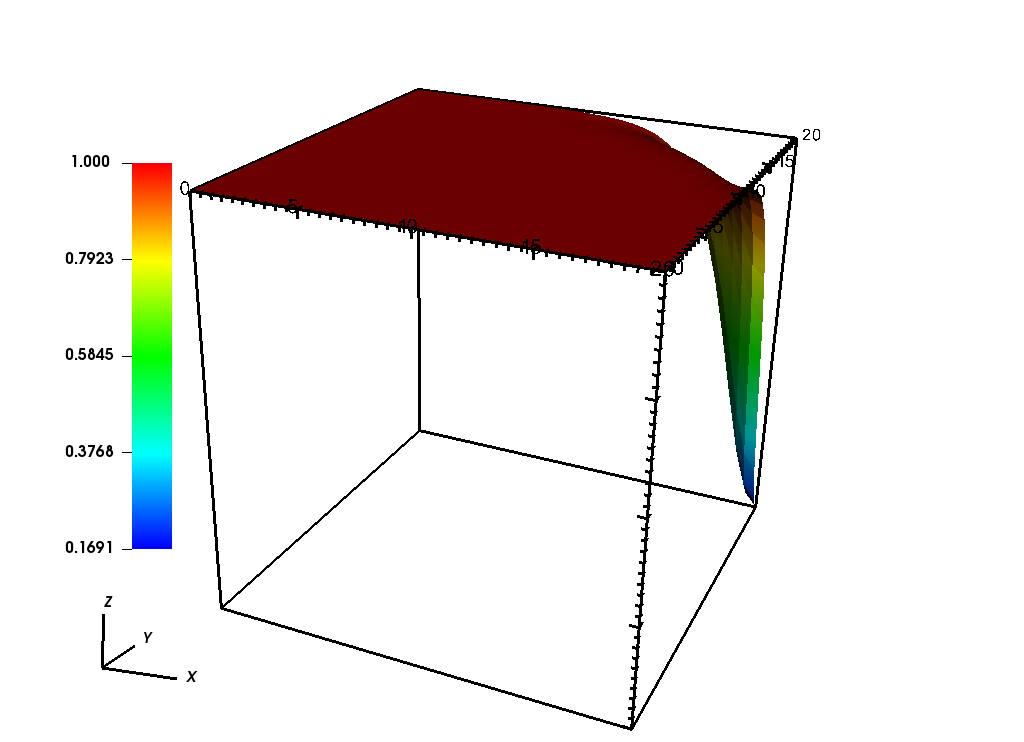}
		\caption{t = 35}
	\end{subfigure}
		\caption{
	The snapshots of cancer cell invasion $u$ for $\mu = 1.0$.
	 }
	\label{fig6}
\end{figure}

\subsection{Effects of the haptotactic coefficient \tops{$\chi$}{chi}}
In this subsection, we consider the effect of the haptotactic coefficient on
the connective cells degeneration by varying $\chi$. We choose $\chi = 0.25,
0.75, 1.25$ with small proliferation rate $\mu = 0.01$, diffusion coefficient
$\alpha^{-1} = 0.1$ and $\varepsilon = 0.2$. The effects of haptotactic
coefficient at different time instances are depicted in
Figs.~\ref{fig7}--\ref{fig12}. Starting with $\chi = 0.25$, the snapshot at
$t=5$ shows that the cancer cells reduce at the origin and start migrating
towards the direction of the gradient of connective tissue. The migration of
the cancer cells becomes more clear and the effect of haptotaxis can be clearly
seen at $t=5$ in Fig.~\ref{fig9} and Fig.~\ref{fig10}, where the small
cluster of cancer cells is created and becomes larger by time. Increasing the
amount of $\chi$ accelerates the cancer cells migration and the cancer cells
should move toward the boundary of the domain quickly, but as we can see from
Fig.~\ref{fig11} and Fig.~\ref{fig12}, oscillations start at $t=5$ and the numerical simulation breaks down for $\chi = 1.25$.\\
 
\begin{figure}[H]
	\centering
	\begin{subfigure}{0.23\textwidth}
		\includegraphics[width=\textwidth]{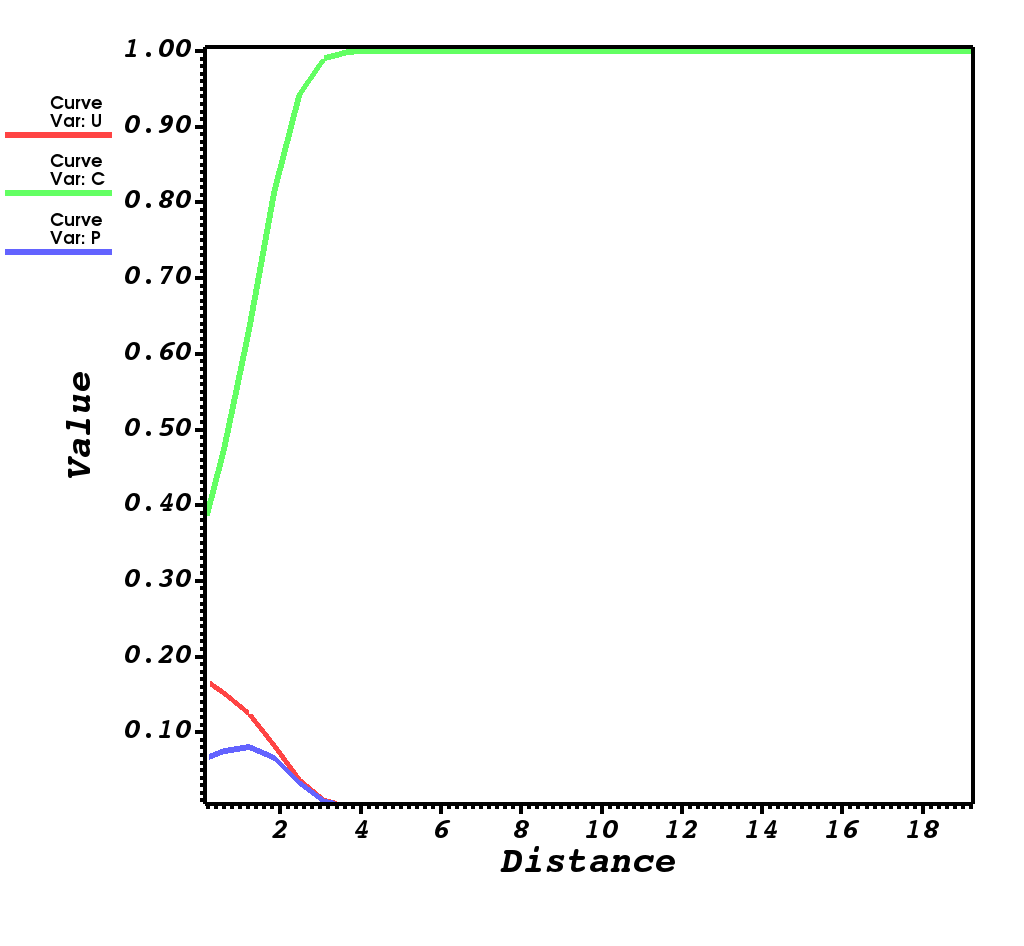}
		\caption{t = 5}
	\end{subfigure}
	\begin{subfigure}{0.23\textwidth}
		\includegraphics[width=\textwidth]{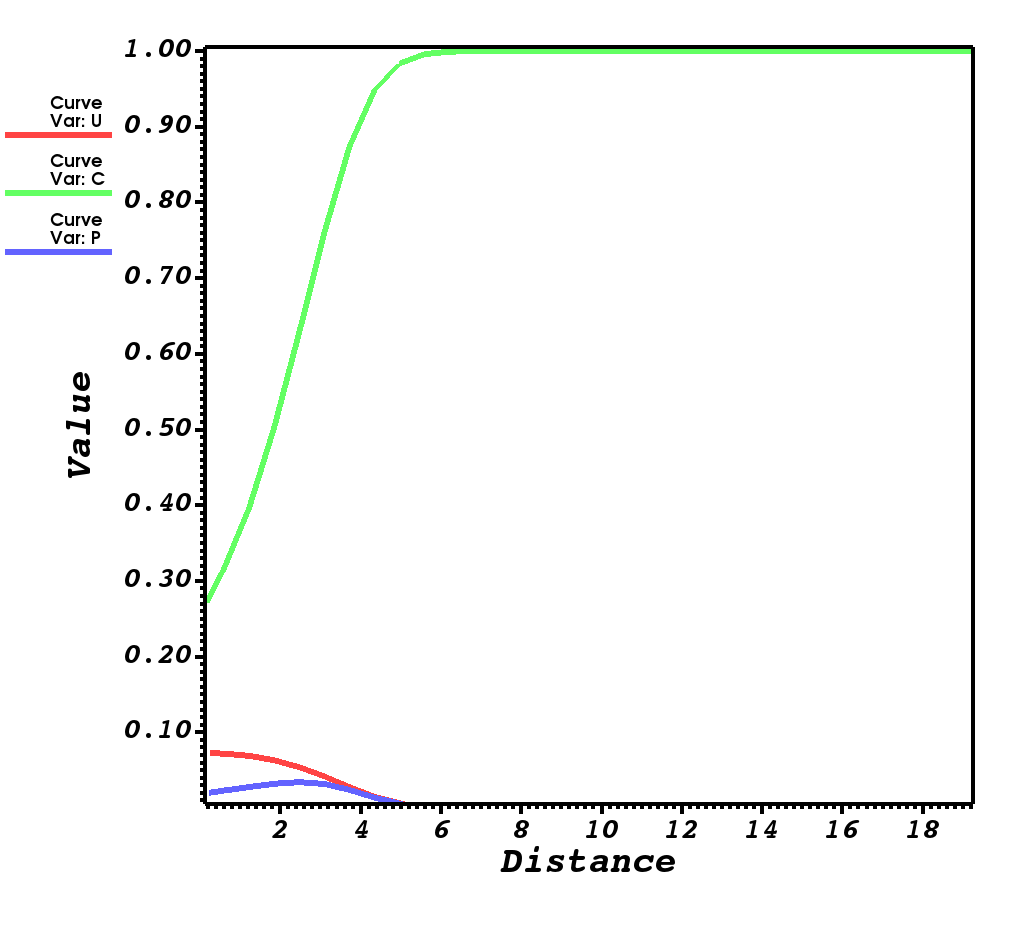}
		\caption{t = 15}
	\end{subfigure}
	\begin{subfigure}{0.23\textwidth}
		\includegraphics[width=\textwidth]{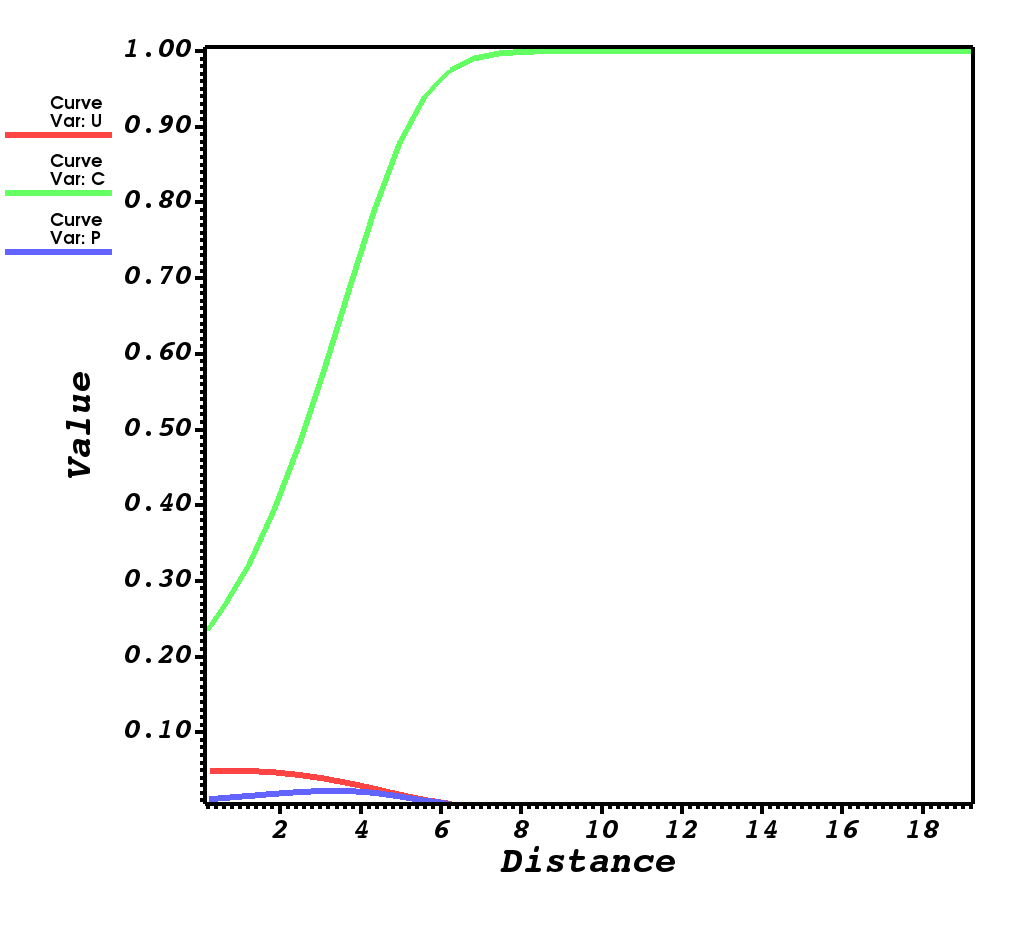}
		\caption{t = 25}
	\end{subfigure}
	\begin{subfigure}{0.23\textwidth}
		\includegraphics[width=\textwidth]{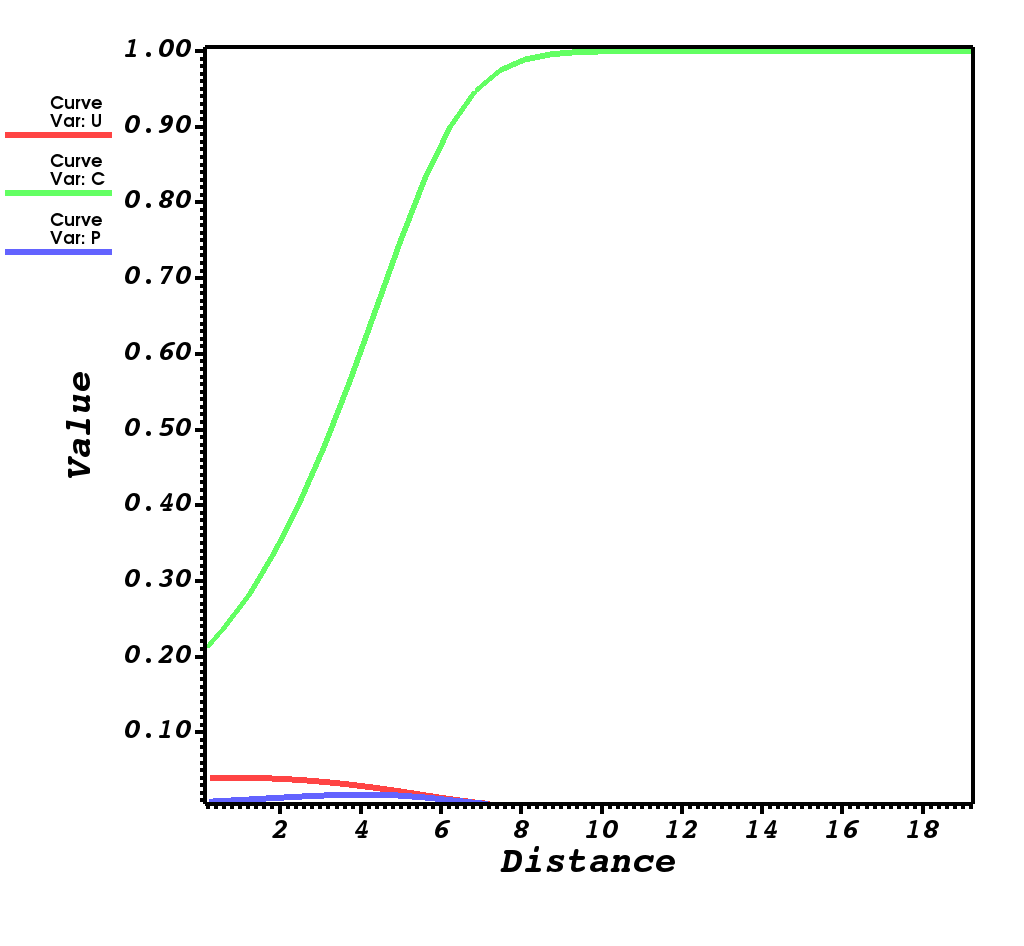}
		\caption{t = 35}
	\end{subfigure}
	\caption{Haptotactic effect on cancer cell invasion, connective tissue and protease at different time steps, $t=5, 15, 25, 35$ for $ \chi =0.25 $.}
	\label{fig7}
\end{figure} 

\begin{figure}[H]
	\centering
	\begin{subfigure}{0.23\textwidth}
		\includegraphics[width=\textwidth]{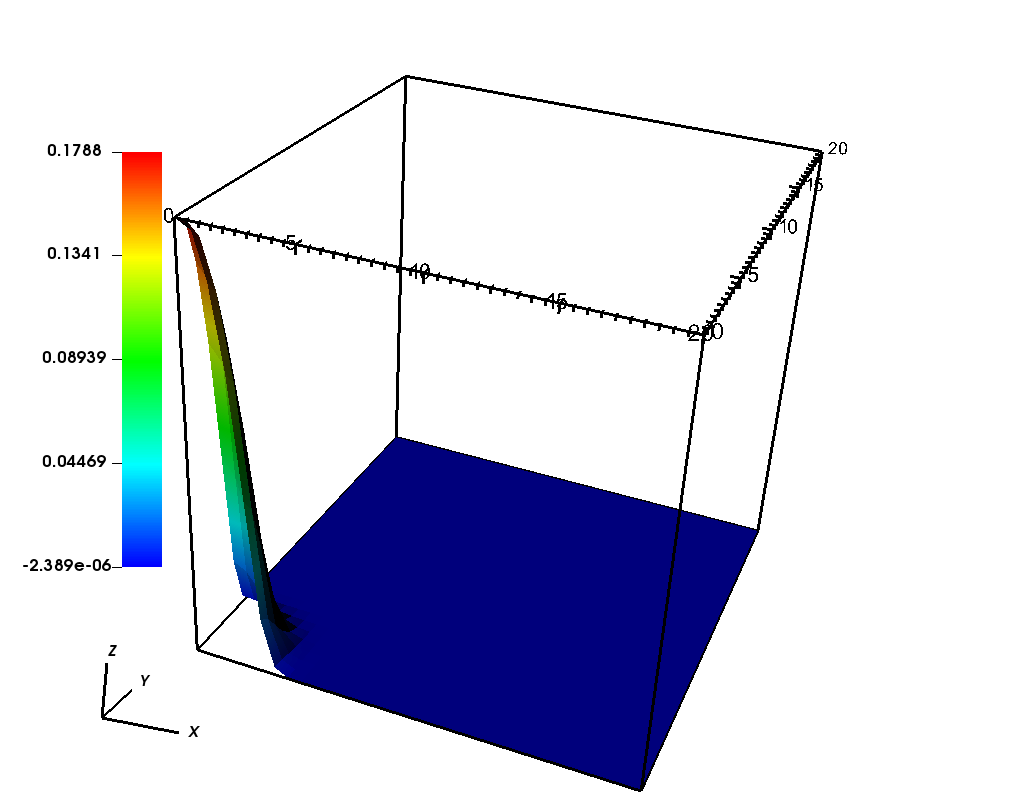}
		\caption{t = 5}
	\end{subfigure}
	\begin{subfigure}{0.23\textwidth}
		\includegraphics[width=\textwidth]{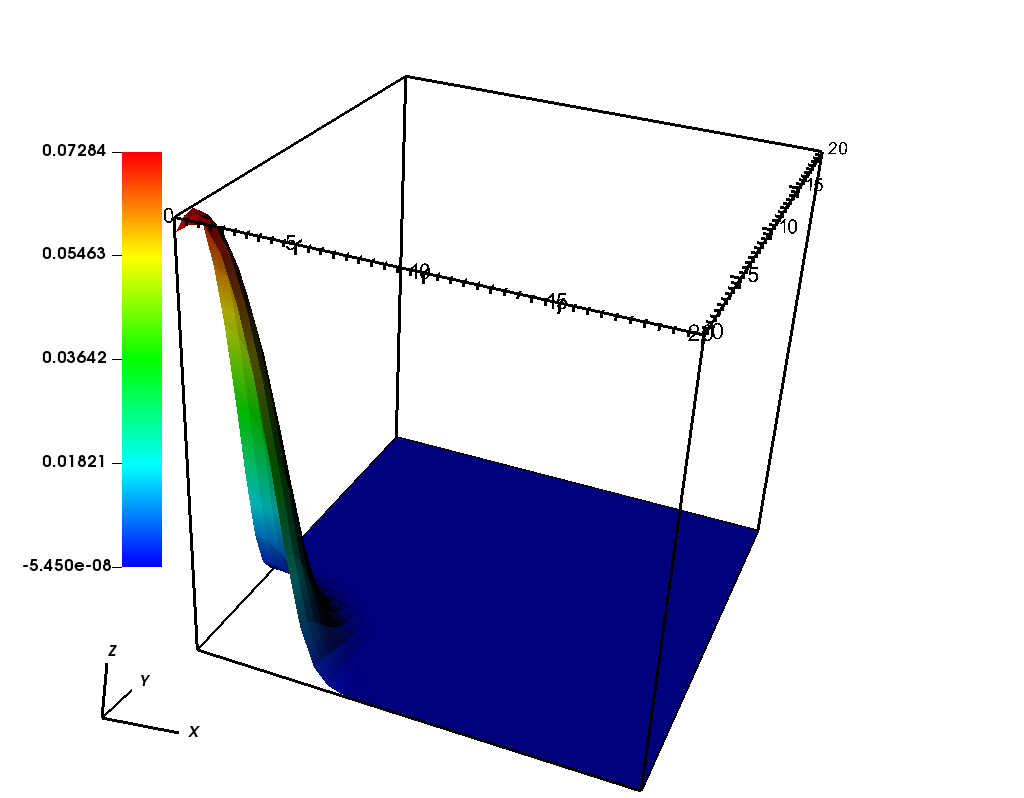}
		\caption{t = 15}
	\end{subfigure}
	\begin{subfigure}{0.23\textwidth}
		\includegraphics[width=\textwidth]{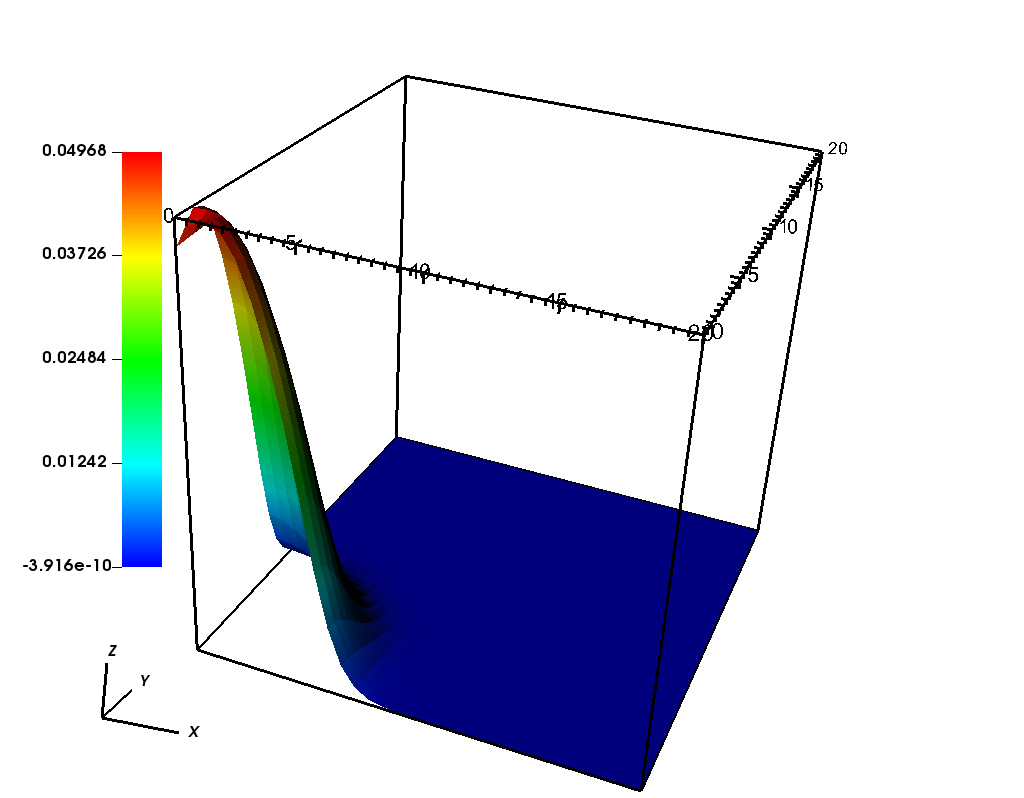}
		\caption{t = 25}
	\end{subfigure}
	\begin{subfigure}{0.23\textwidth}
		\includegraphics[width=\textwidth]{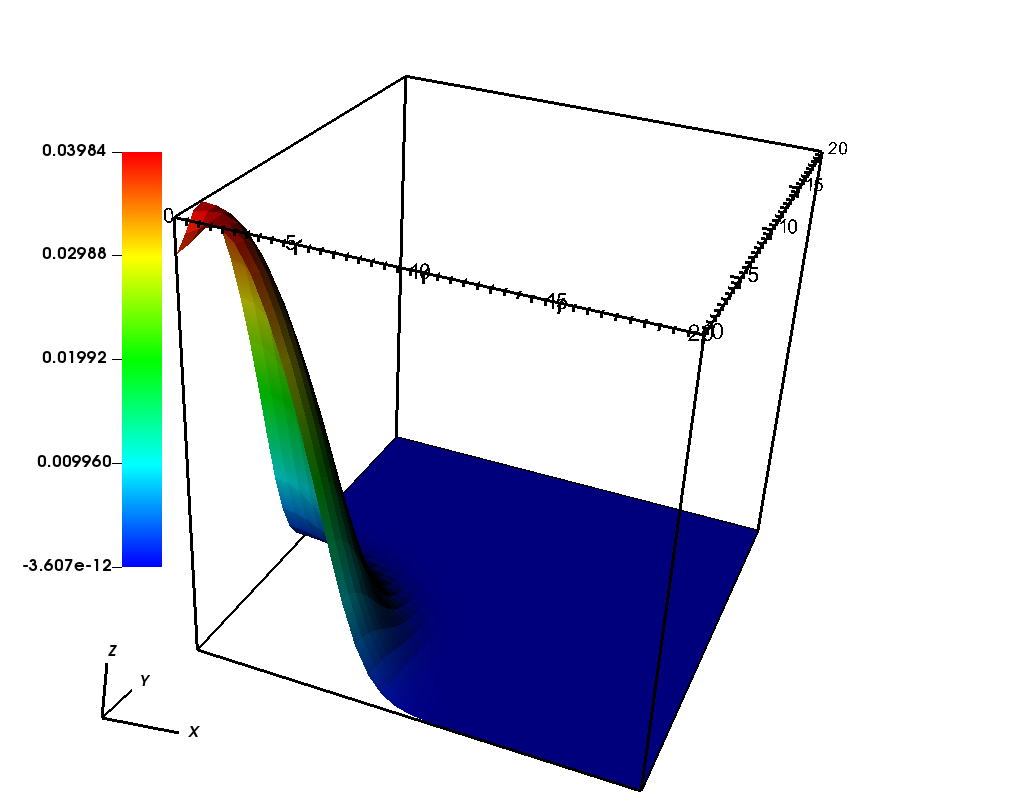}
		\caption{t = 35}
	\end{subfigure}
	\caption{The snapshots of cancer cell invasion $u$ for $\chi = 0.25$, the maximum amount of cancer cells decreasing from left to right is 0.1788, 0.07284, 0.04968, and 0.03984.}
	\label{fig8}
\end{figure}

\begin{figure}[H]
	\centering
	\begin{subfigure}{0.23\textwidth}
		\includegraphics[width=\textwidth]{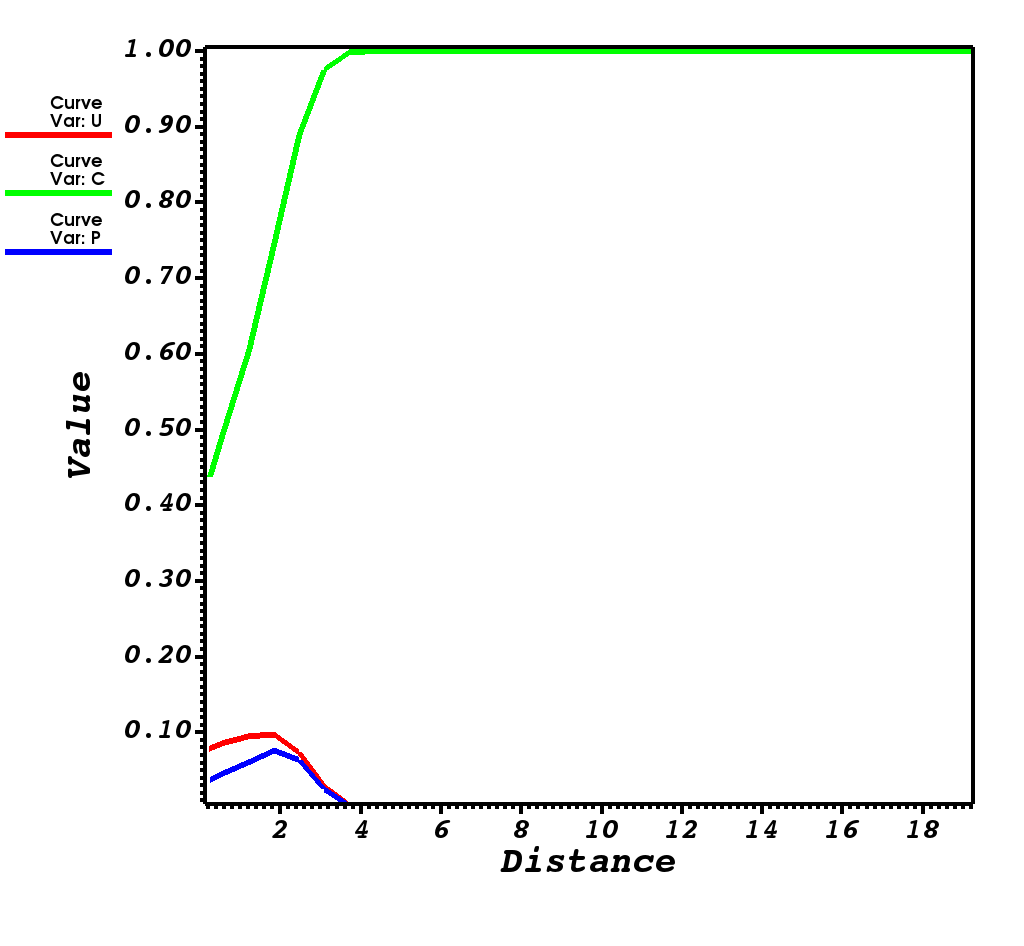}
		\caption{t = 5}
	\end{subfigure}
	\begin{subfigure}{0.23\textwidth}
		\includegraphics[width=\textwidth]{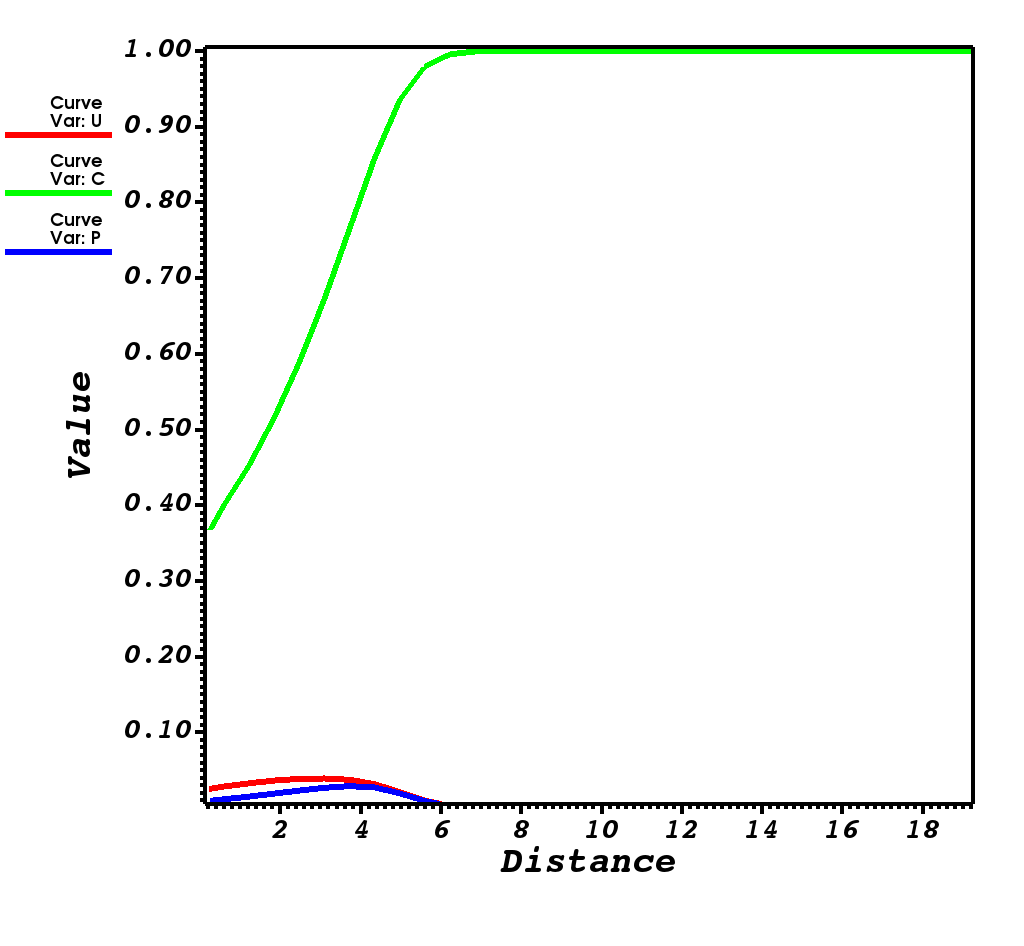}
		\caption{t = 15}
	\end{subfigure}
	\begin{subfigure}{0.23\textwidth}
		\includegraphics[width=\textwidth]{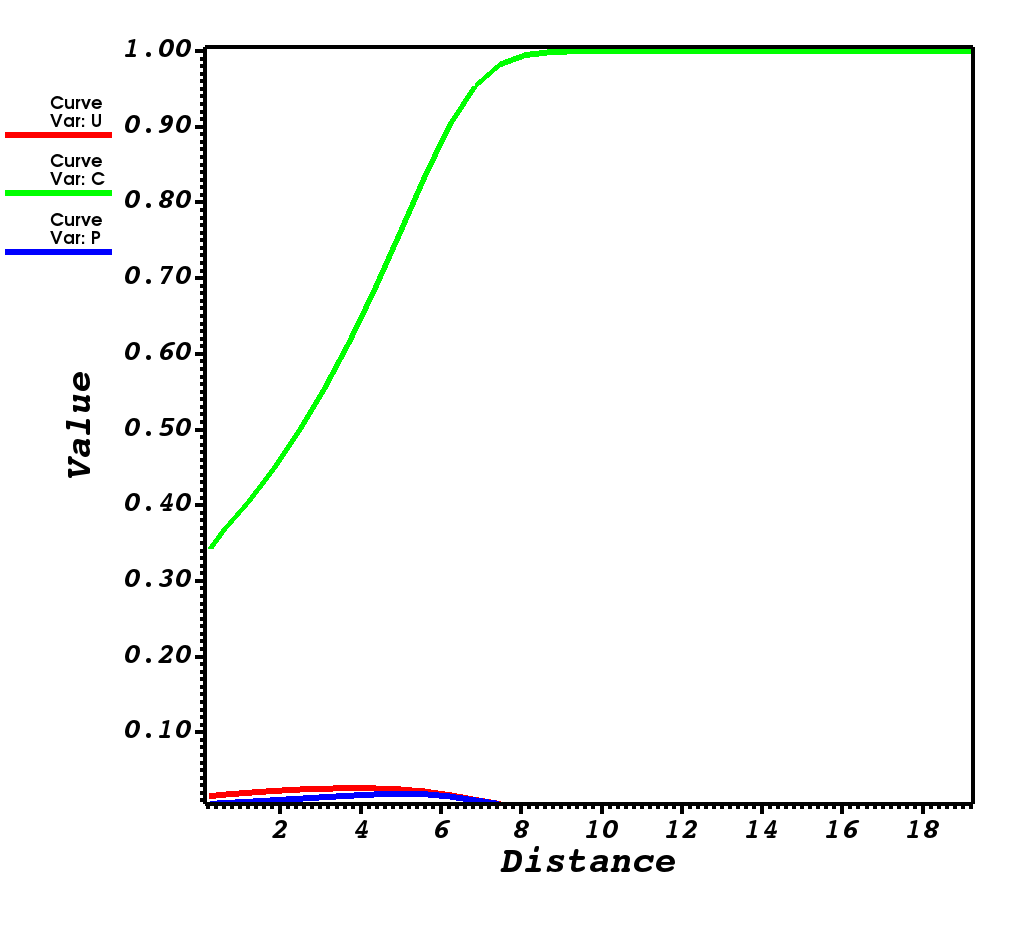}
		\caption{t = 25}
	\end{subfigure}
	\begin{subfigure}{0.23\textwidth}
		\includegraphics[width=\textwidth]{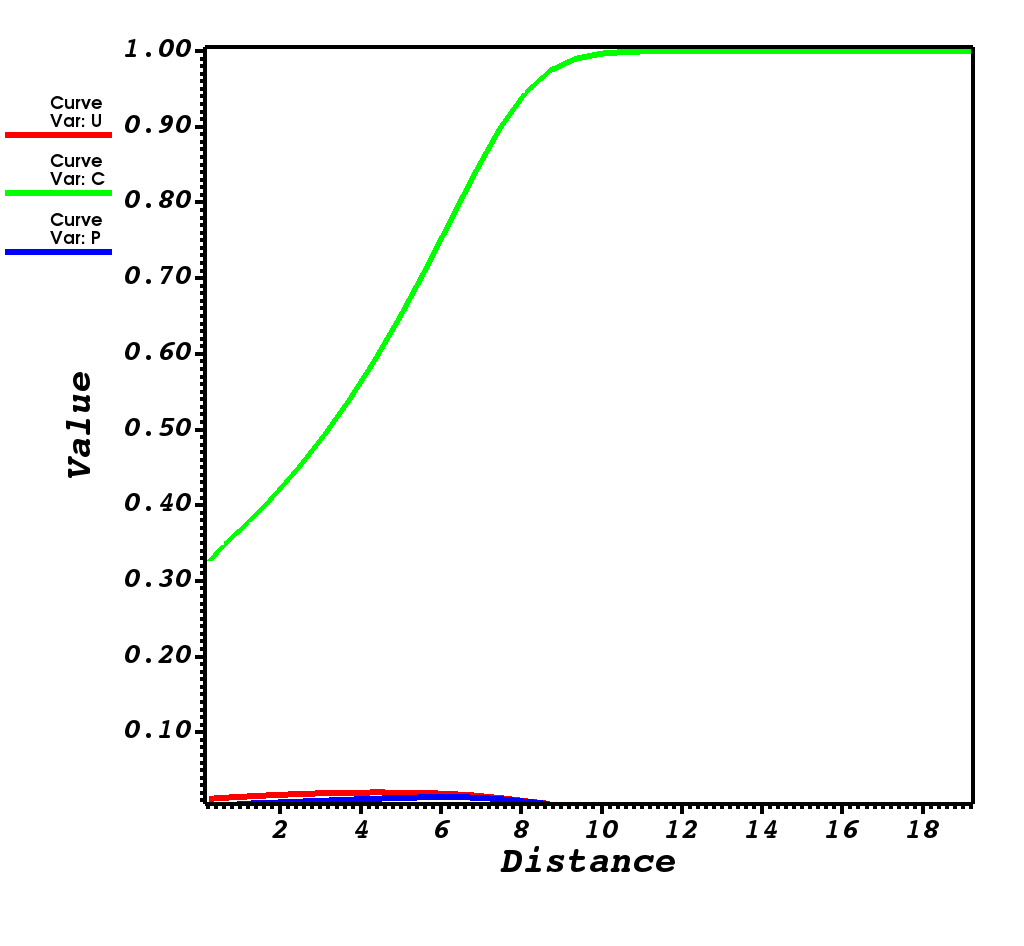}
		\caption{t = 35}
	\end{subfigure}
	\caption{Haptotactic effect on cancer cell invasion, connective tissue and protease at different time steps, $t=5, 15, 25, 35$ for $ \chi =0.75 $.}
	\label{fig9}
\end{figure} 

\begin{figure}[H]
	\centering
	\begin{subfigure}{0.23\textwidth}
		\includegraphics[width=\textwidth]{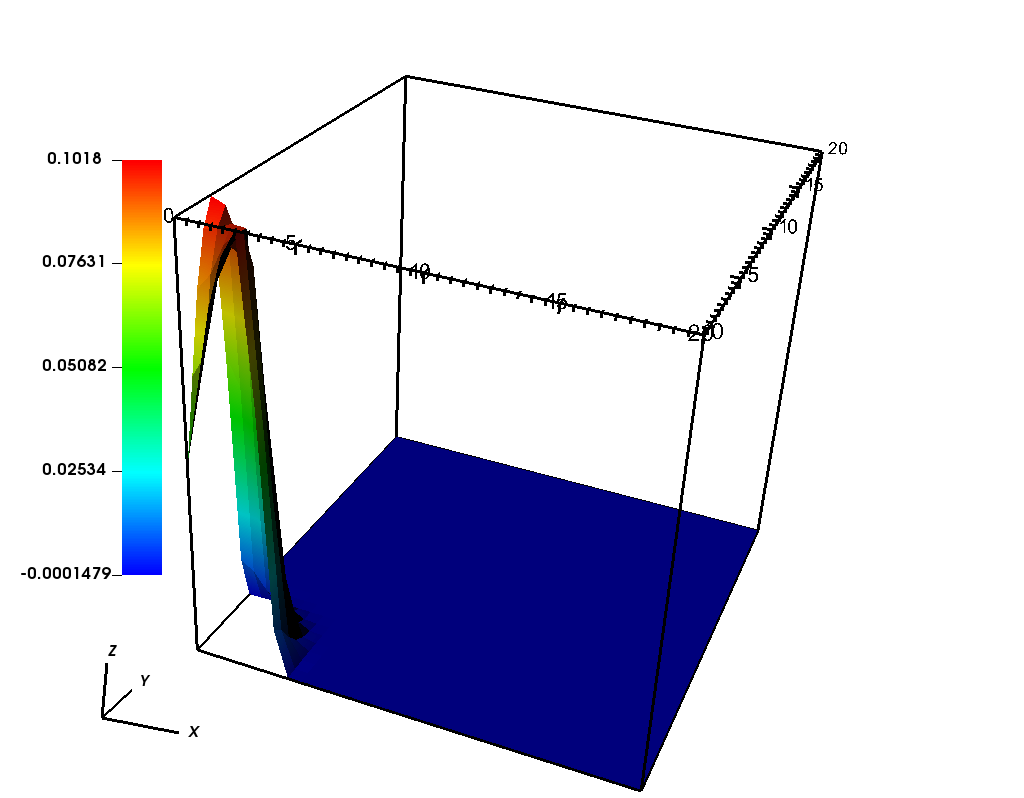}
		\caption{t = 5}
	\end{subfigure}
	\begin{subfigure}{0.23\textwidth}
		\includegraphics[width=\textwidth]{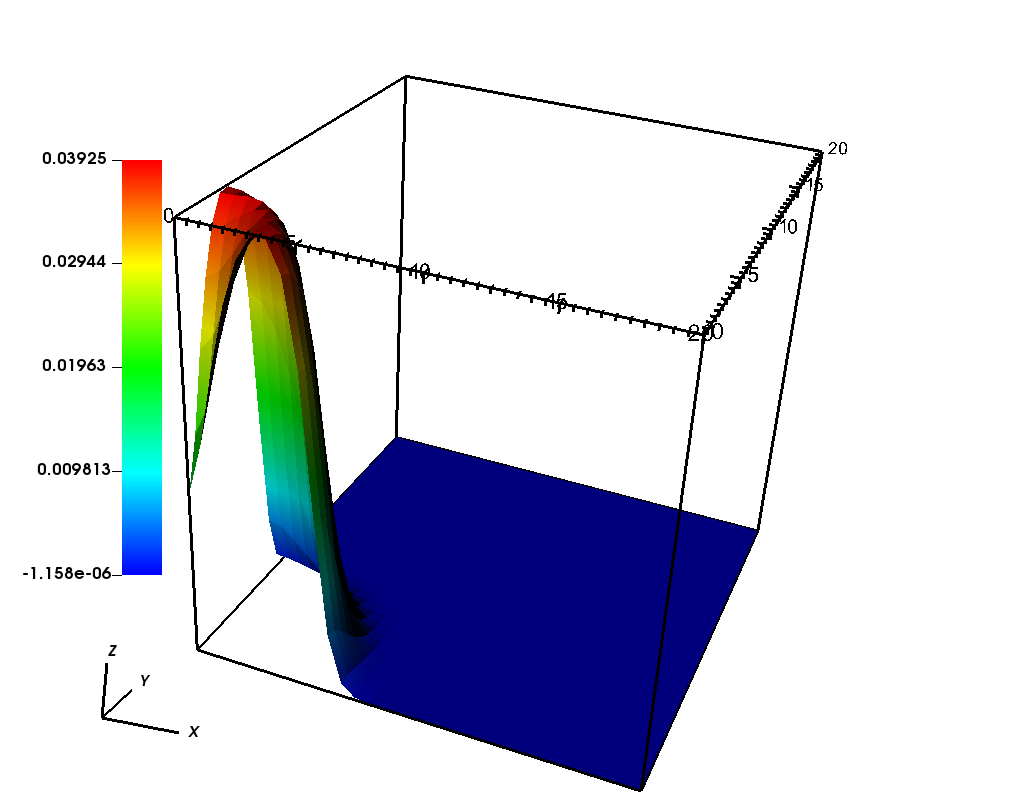}
		\caption{t = 15}
	\end{subfigure}
	\begin{subfigure}{0.23\textwidth}
		\includegraphics[width=\textwidth]{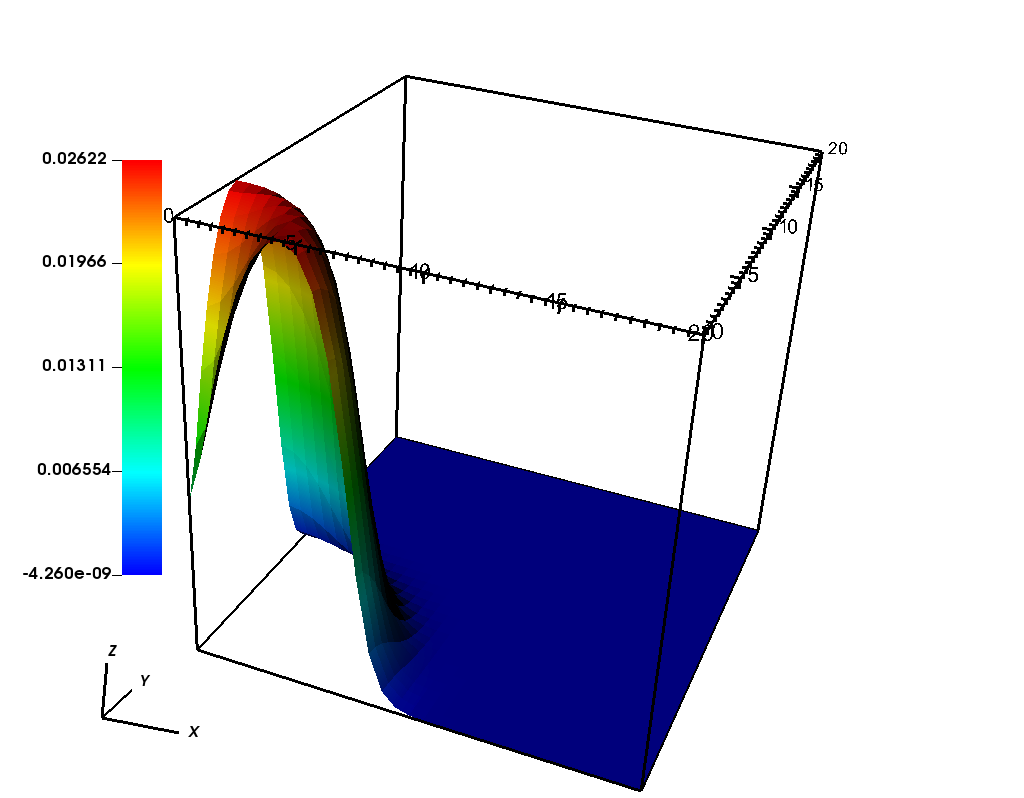}
		\caption{t = 25}
	\end{subfigure}
	\begin{subfigure}{0.23\textwidth}
		\includegraphics[width=\textwidth]{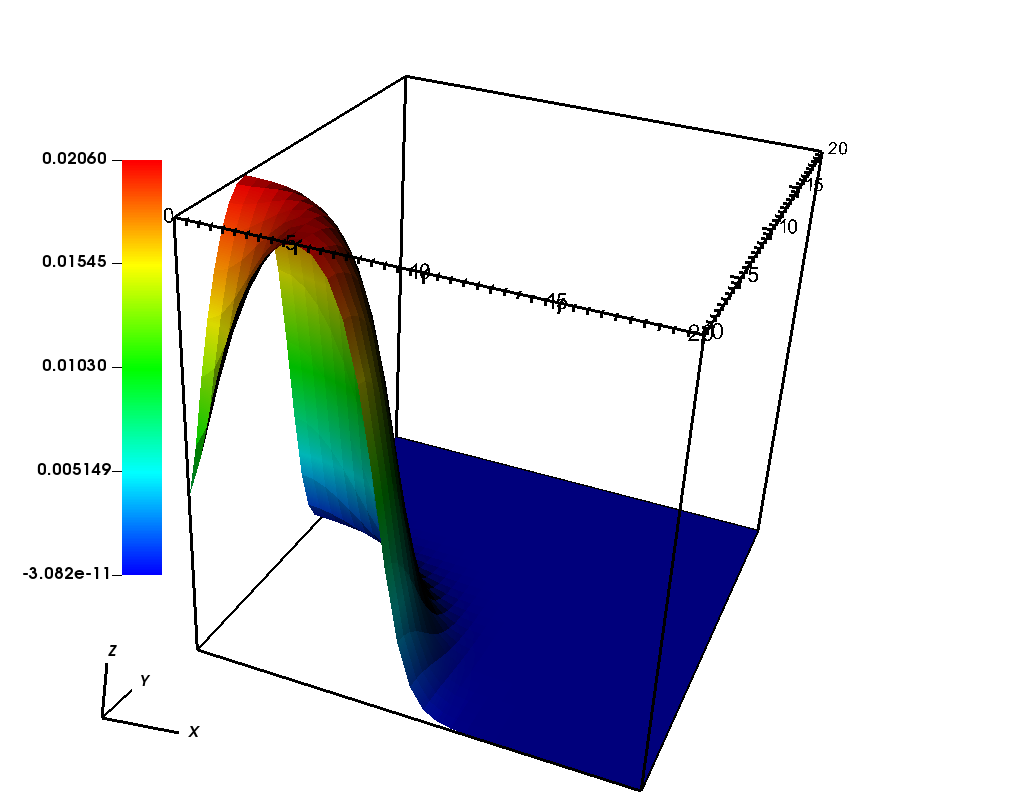}
		\caption{t = 35}
	\end{subfigure}
	\caption{The snapshots of cancer cell invasion $u$ for $\chi = 0.75$, the maximum amount of cancer cells decreasing from left to right is 0.1018, 0.03925, 0.02622, and 0.02060.}
	\label{fig10}
\end{figure} 

\begin{figure}[H]
	\centering
	\begin{subfigure}{0.23\textwidth}
		\includegraphics[width=\textwidth]{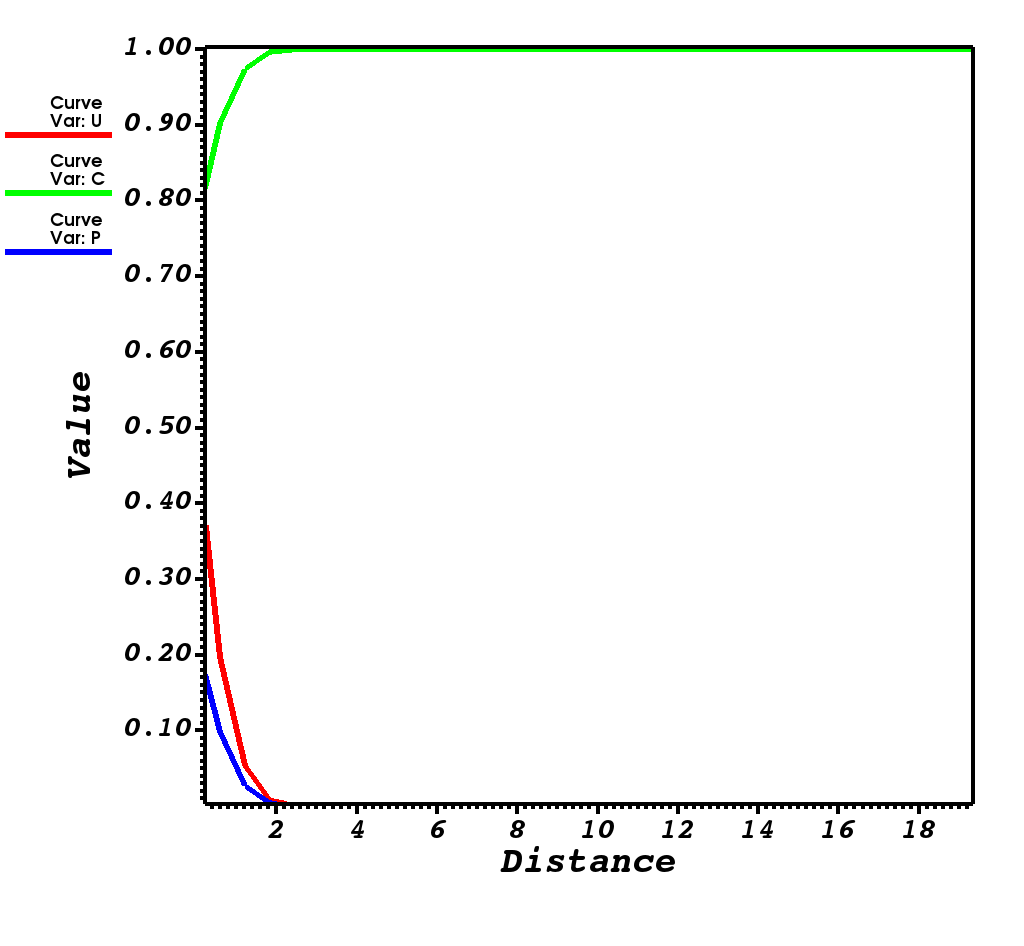}
		\caption{t = 0}
	\end{subfigure}
	\begin{subfigure}{0.23\textwidth}
		\includegraphics[width=\textwidth]{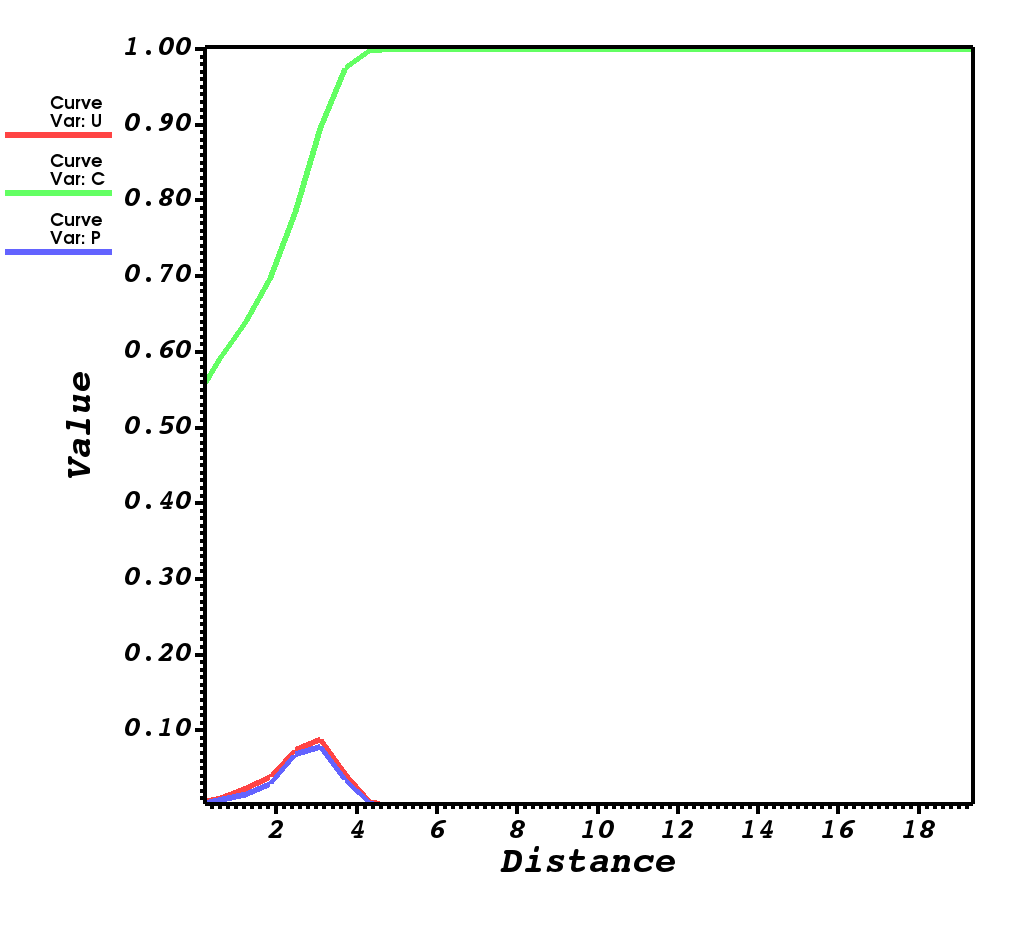}
		\caption{t = 5}
	\end{subfigure}
	\caption{Haptotactic effect on cancer cell invasion, connective tissue and protease at different time steps, $t=0, 5$ for $ \chi =1.25 $.}
	\label{fig11}
\end{figure} 

\begin{figure}[!htb]
	\centering
	\begin{subfigure}{0.23\textwidth}
		\includegraphics[width=\textwidth]{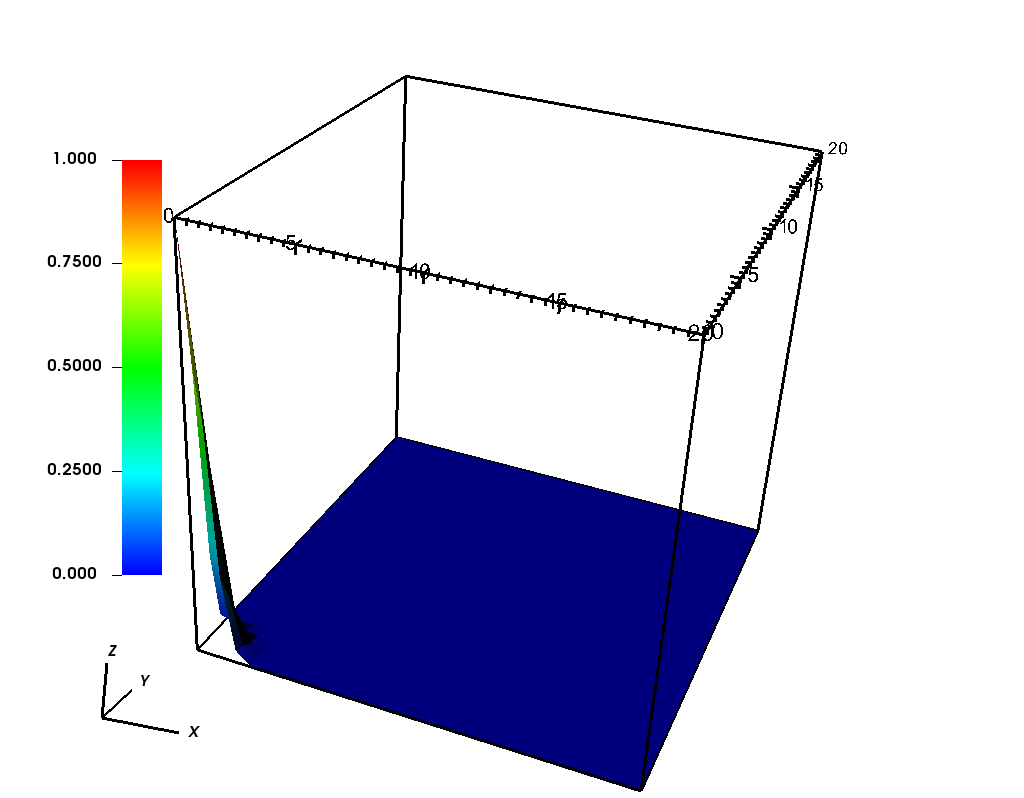}
		\caption{t = 0}
	\end{subfigure}
	\begin{subfigure}{0.23\textwidth}
		\includegraphics[width=\textwidth]{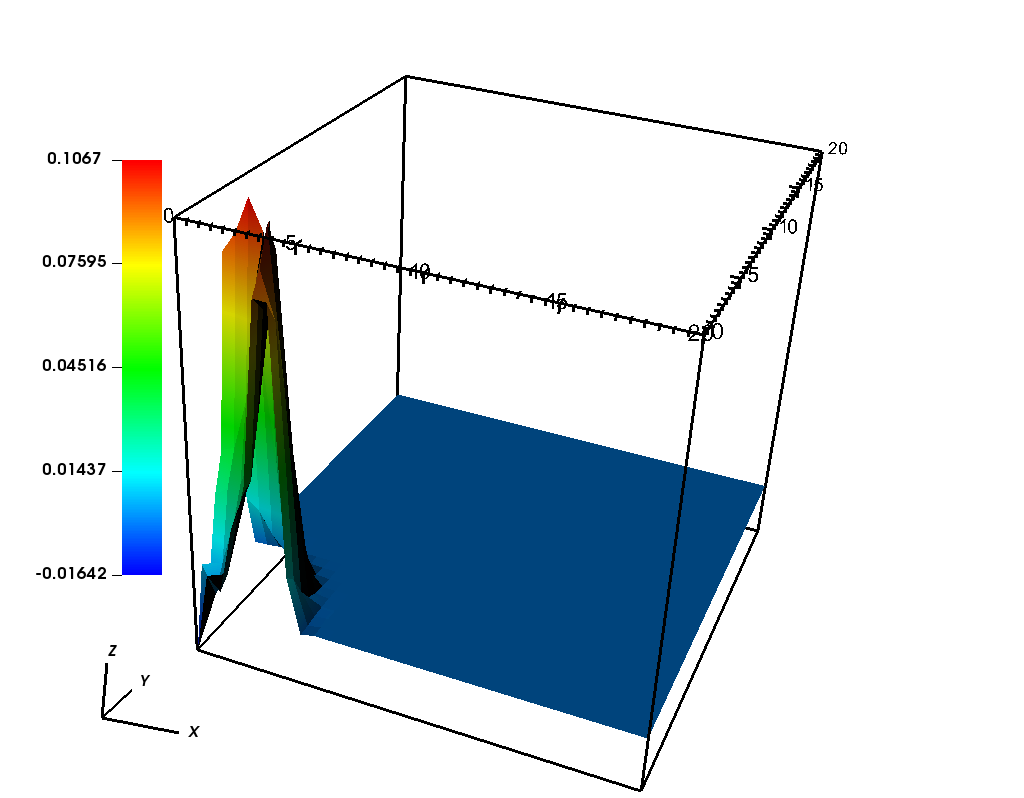}
		\caption{t = 5}
	\end{subfigure}
	\caption{The snapshots of cancer cell invasion $u$ for $\chi = 1.25$.}
	\label{fig12}
\end{figure}

\subsection{Identical proliferation and haptotactic coefficients}
In this subsection, we consider the case when the 
proliferation rate is equal to haptotactic coefficient, i.e., 
$\mu=\chi=1$, and all other parameters are the same as in the previous subsections. 
As it can be seen from Figs.~\ref{fig13} and \ref{fig14}, due to the proliferation rate, 
the concentration of cancer growths quickly even from the beginning resulting
from a
high amount of haptotaxis, therefore the tumour migrates rapidly inside the 
domain and degrades the connective tissue in a much shorter amount of time.

\begin{figure}[H]
	\centering
	\begin{subfigure}{0.24\textwidth}
		\includegraphics[width=\textwidth]{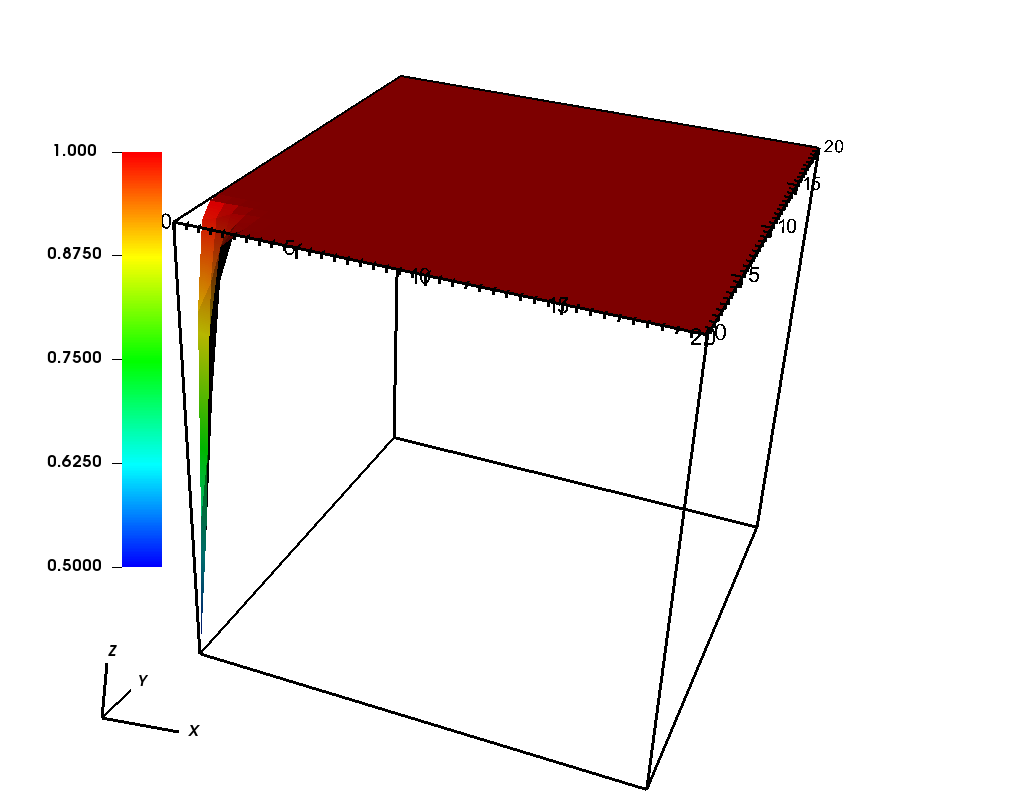}
		\caption{t = 0}
	\end{subfigure}
	\begin{subfigure}{0.24\textwidth}
		\includegraphics[width=\textwidth]{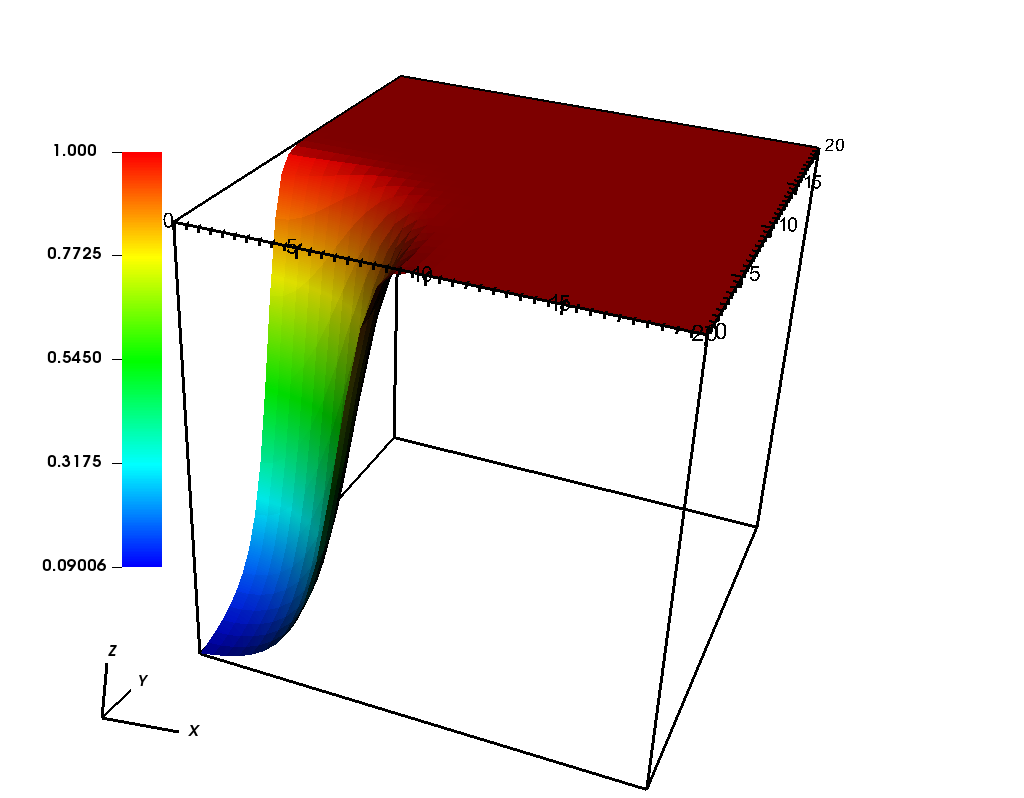}
		\caption{t = 10}
	\end{subfigure}
	\begin{subfigure}{0.24\textwidth}
		\includegraphics[width=\textwidth]{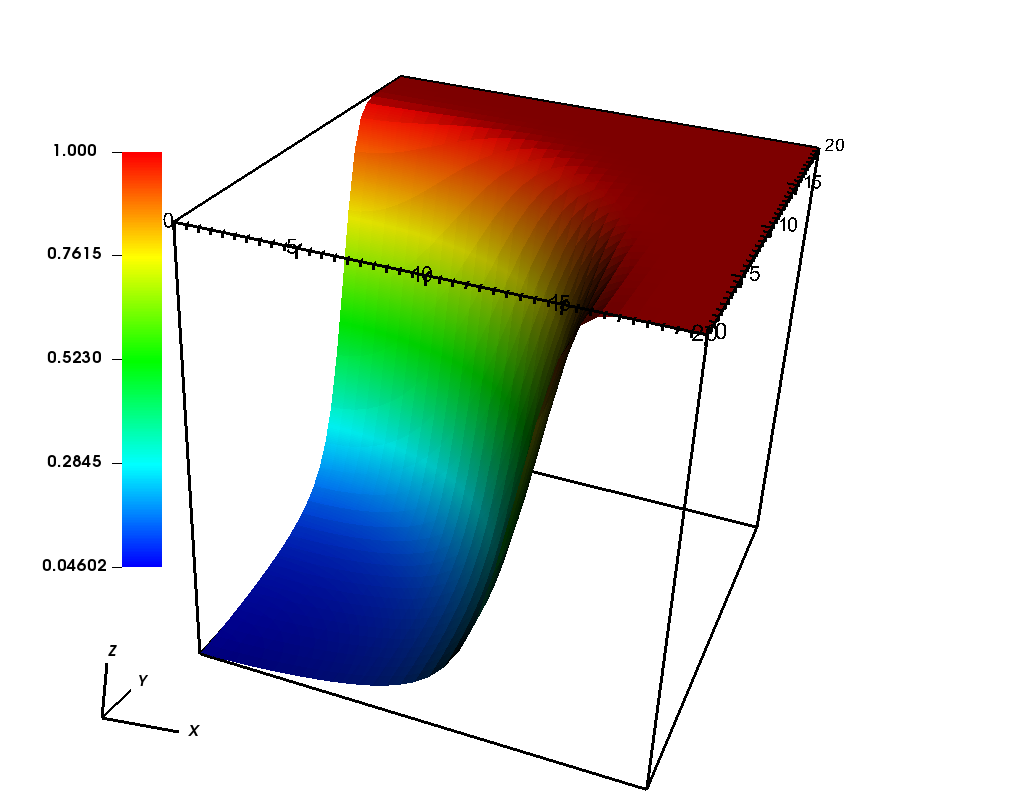}
		\caption{t = 20}
	\end{subfigure}
	\begin{subfigure}{0.24\textwidth}
		\includegraphics[width=\textwidth]{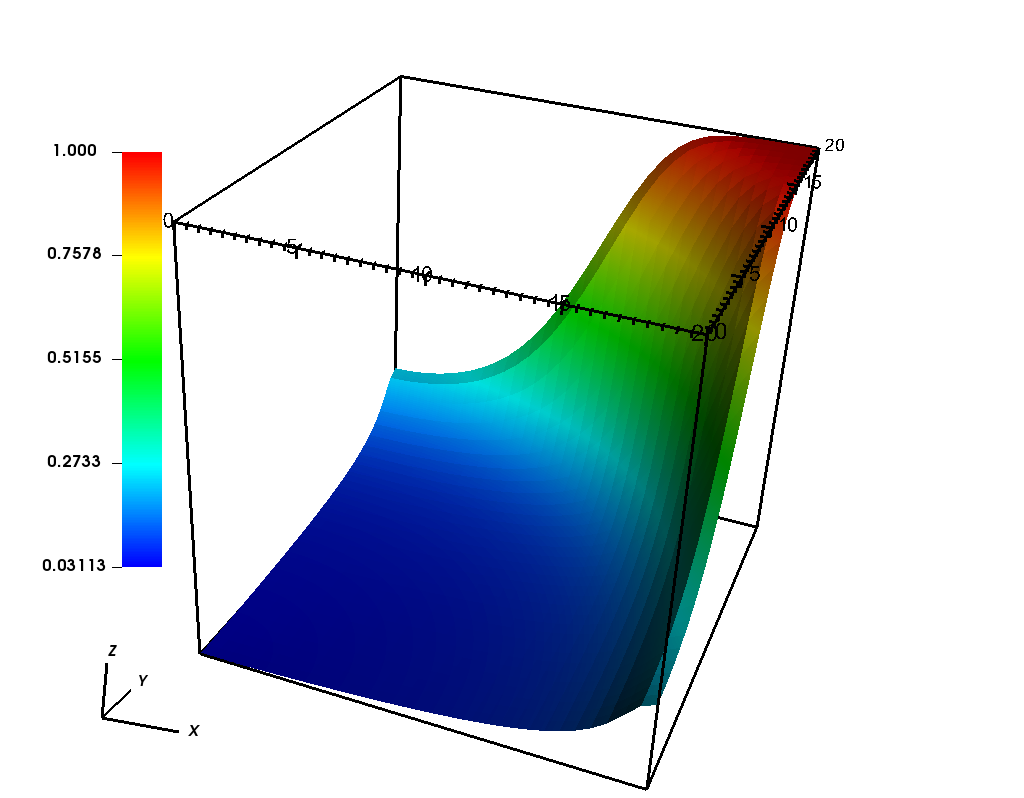}
		\caption{t = 30}
	\end{subfigure}
	\caption{
	Degradation of connective tissue $c$ for $\chi =1.0, \mu =1.0$ at different time instances $t=0, 10, 20$ and 30.
}
	\label{fig13}
\end{figure} 

\begin{figure}[H]
	\centering
	\begin{subfigure}{0.24\textwidth}
		\includegraphics[width=\textwidth]{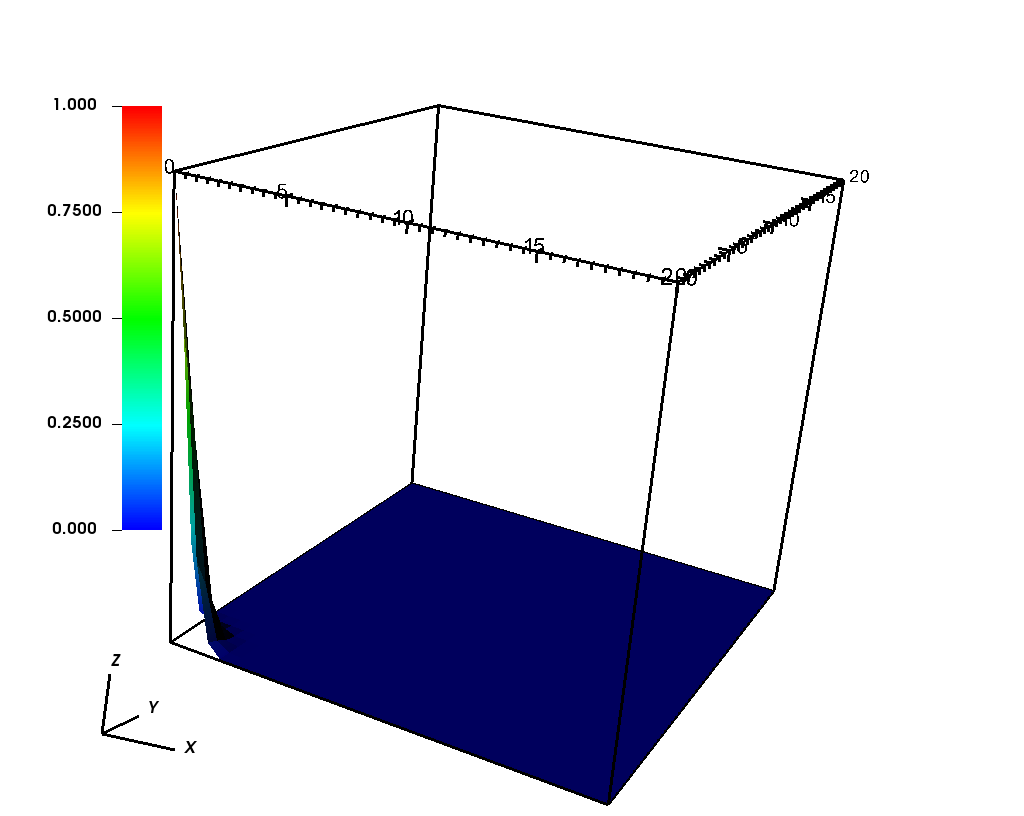}
		\caption{t = 0}
	\end{subfigure}
	\begin{subfigure}{0.24\textwidth}
		\includegraphics[width=\textwidth]{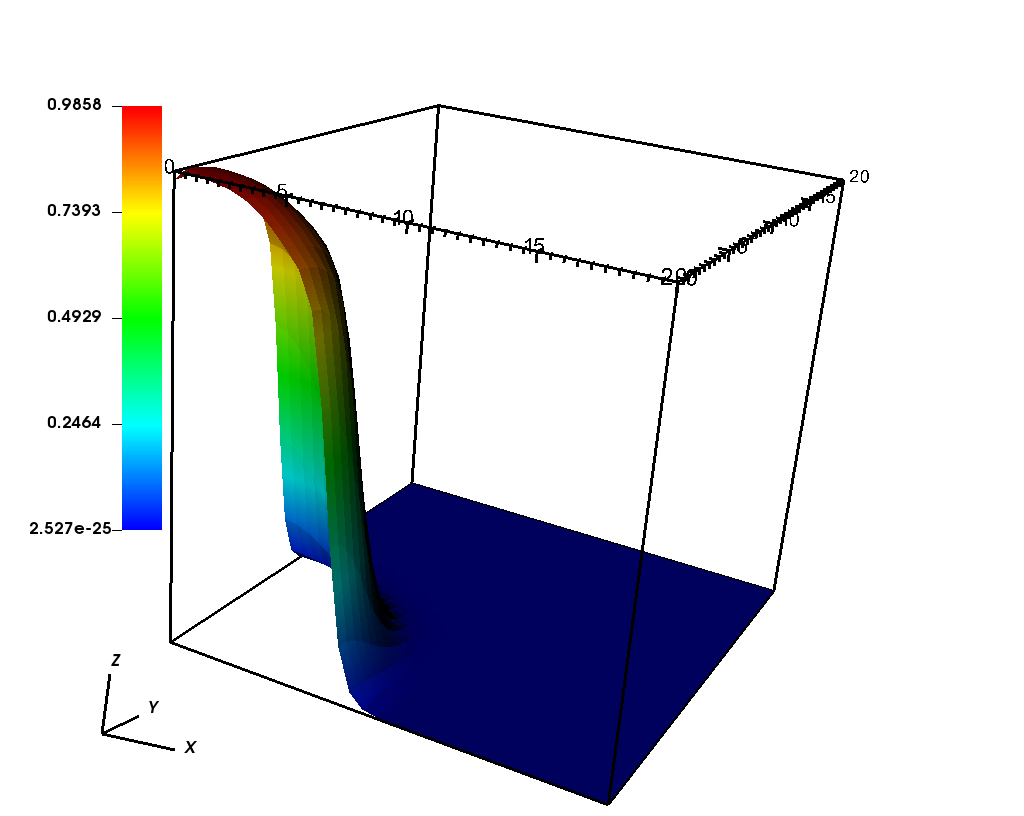}
		\caption{t = 10}
	\end{subfigure}
	\begin{subfigure}{0.24\textwidth}
		\includegraphics[width=\textwidth]{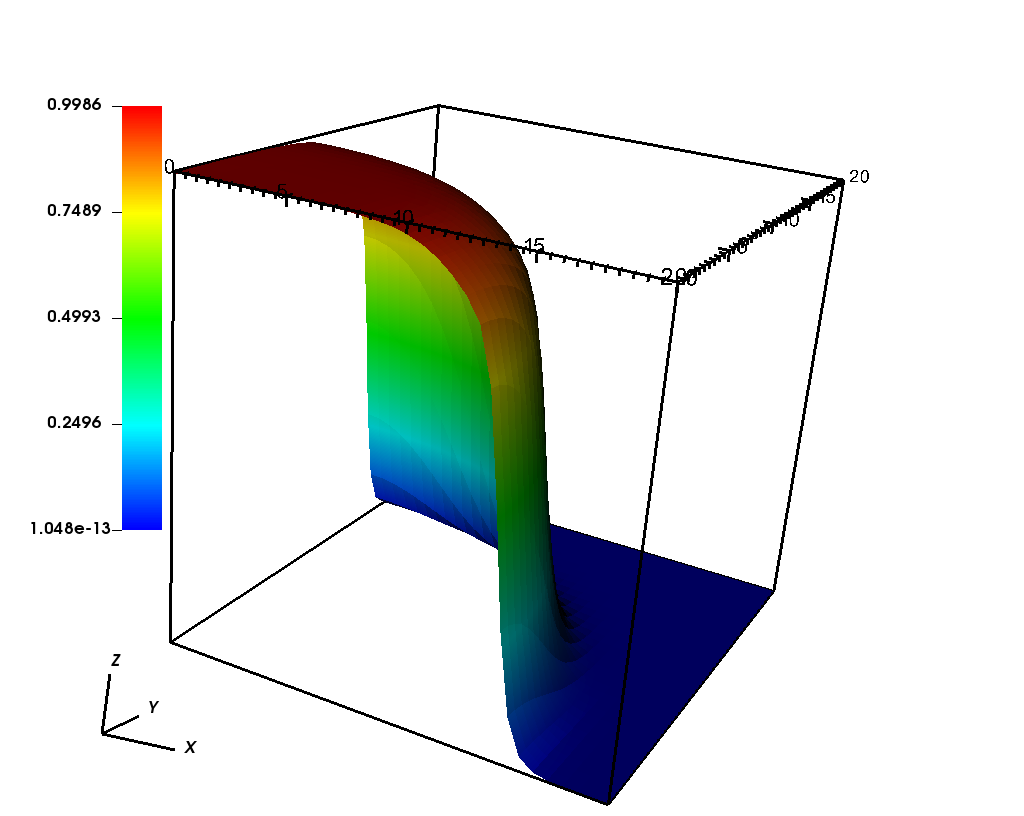}
		\caption{t = 20}
	\end{subfigure}
	\begin{subfigure}{0.24\textwidth}
		\includegraphics[width=\textwidth]{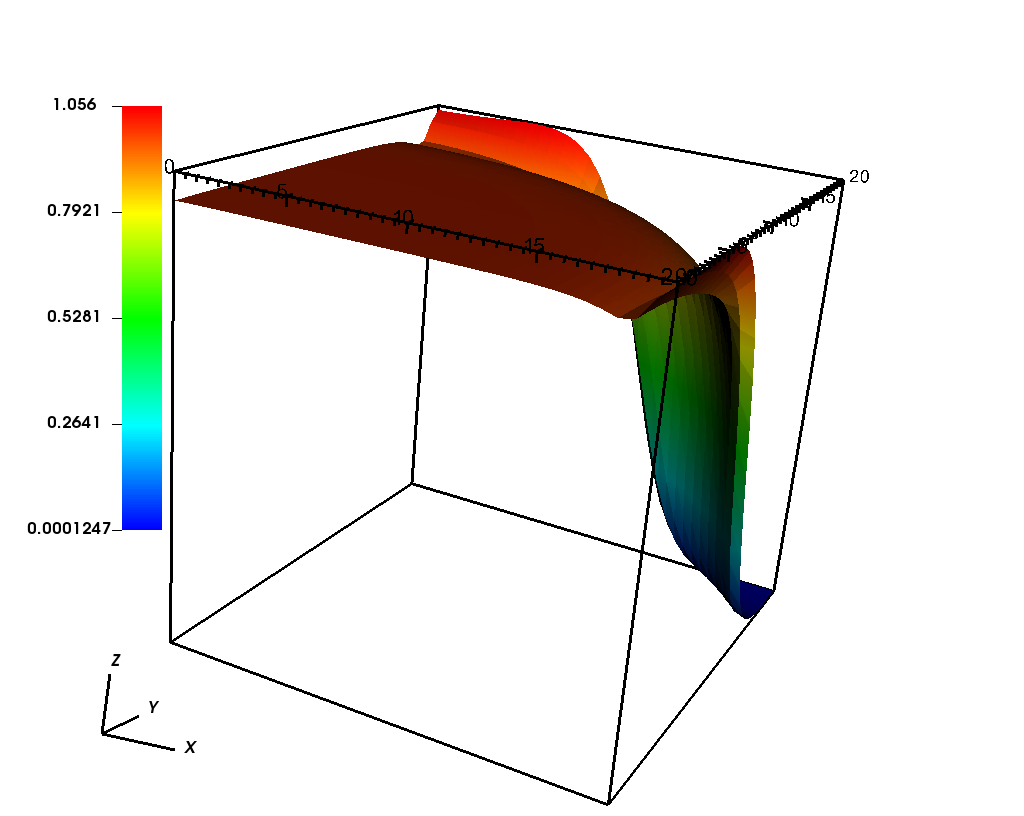}
		\caption{t = 30}
	\end{subfigure}
	\caption{
	Invasion of cancer cells $u$ for $\chi =1.0, \mu =1.0$ at different time instances $t=0, 10, 20$ and 30.
}
	\label{fig14}
\end{figure}

\subsection{Three dimensional simulations}\label{sec:3d}
In this final subsection, we perform numerical simulations in three spatial dimensions 
to consider some more realistic movement. Here, the 
experiments are performed on a mesh with $32\, 768$ hexahedral elements covering 
the domain $\Omega$. Figs.~\ref{fig15} and \ref{fig16} show the snapshots
of cancer cells and connective tissues for growth rate $\mu =1$ and haptotactic
coefficient $\chi = 1$. Further, we use the parameters $\alpha^{-1} = 0.1$ and
$\varepsilon = 0.2$. As it can be seen, at $t=5$ the 
connective tissue covers the entire domain and only a small amount of cancer
cells exists at the corner, by the time cancer cells growth and invade the domain of connective tissue quickly and by $t=35$ 
almost all the domain is occupied by cancer cells.

\begin{figure}[H]
	\centering
	\begin{subfigure}{0.24\textwidth}
		\includegraphics[width=\textwidth]{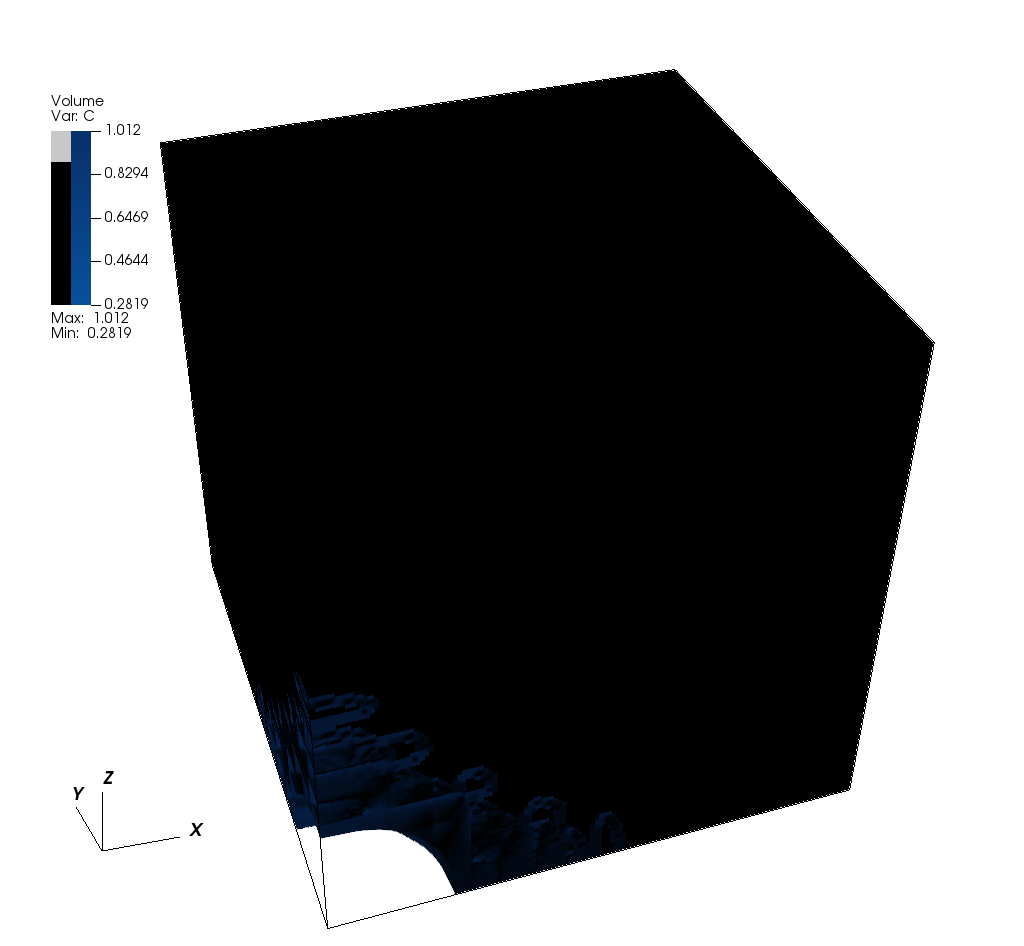}
		\caption{t = 5}
	\end{subfigure}
	\begin{subfigure}{0.24\textwidth}
		\includegraphics[width=\textwidth]{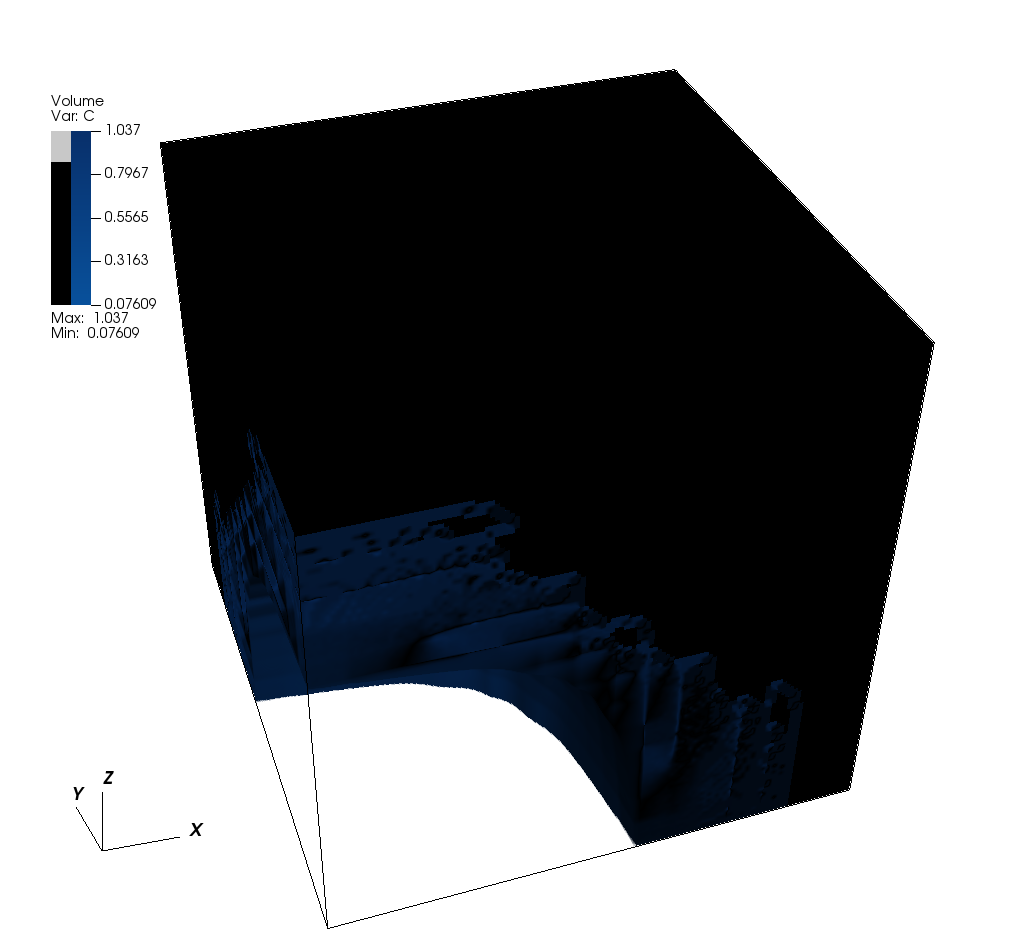}
		\caption{t = 15}
	\end{subfigure}
	\begin{subfigure}{0.24\textwidth}
		\includegraphics[width=\textwidth]{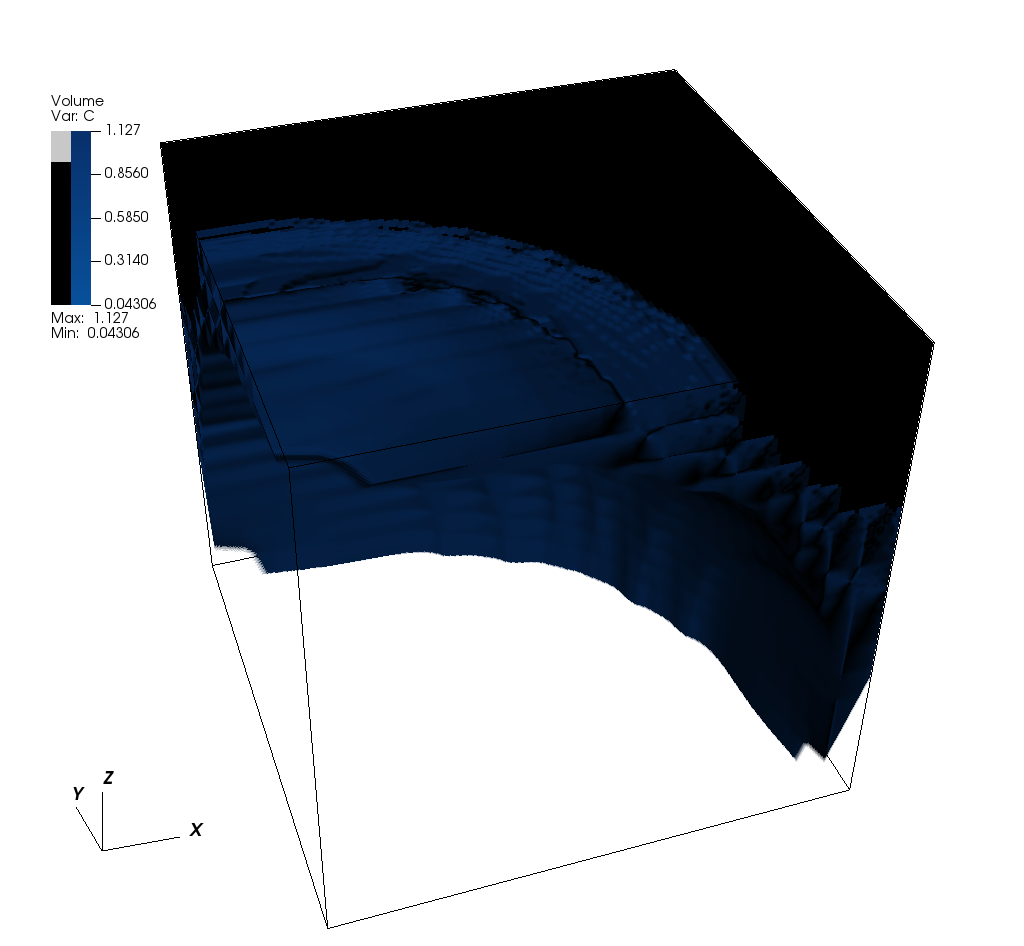}
		\caption{t = 25}
	\end{subfigure}
	\begin{subfigure}{0.24\textwidth}
		\includegraphics[width=\textwidth]{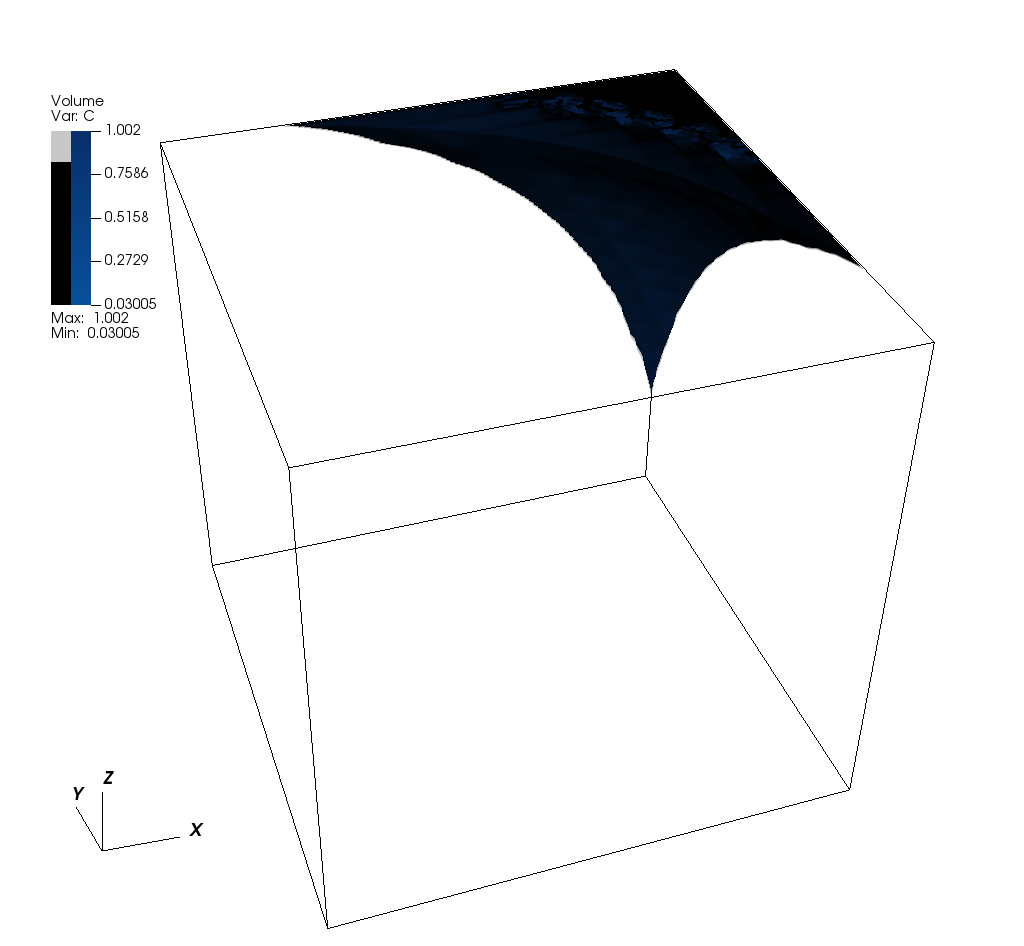}
		\caption{t = 35}
	\end{subfigure}
	\caption{
	Degradation of connective tissue $c$ for $\chi =1.0, \mu =1.0$ at different time instances $t=5, 15, 25$ and 35.
}
	\label{fig15}
\end{figure} 

\begin{figure}[H]
	\centering
	\begin{subfigure}{0.24\textwidth}
		\includegraphics[width=\textwidth]{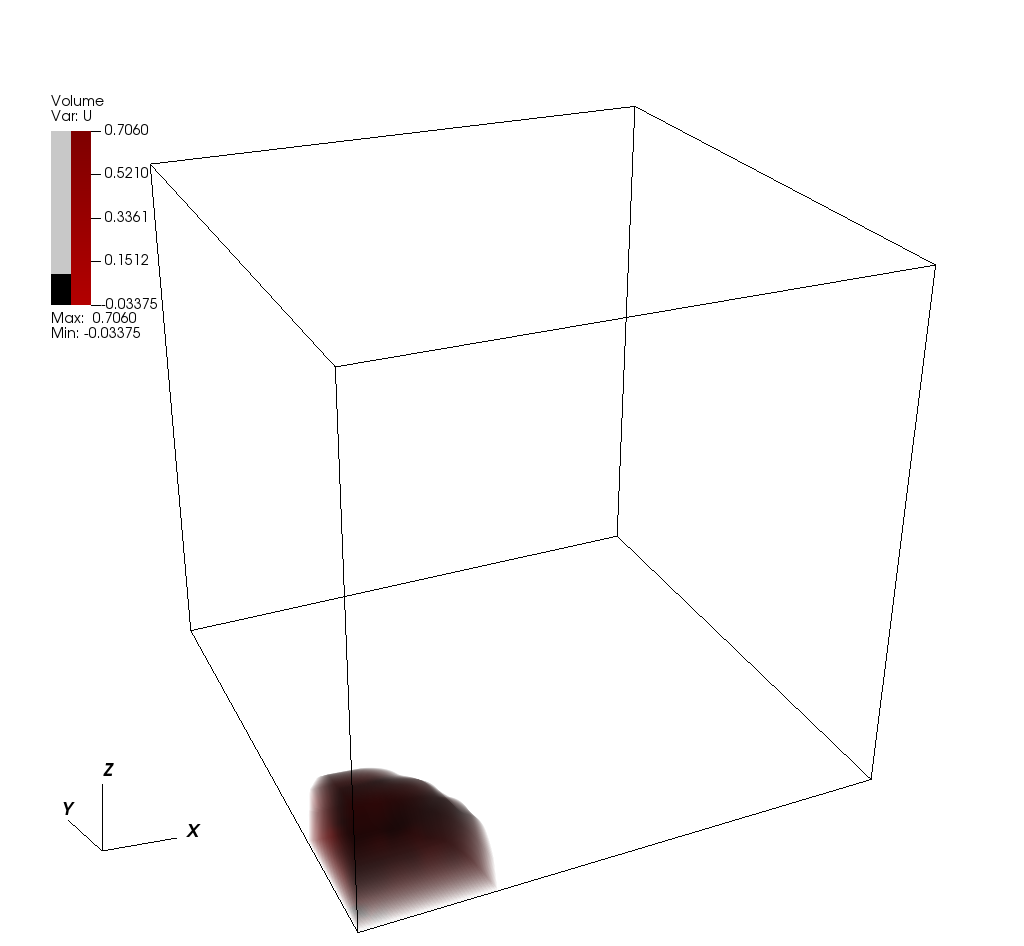}
		\caption{t = 5}
	\end{subfigure}
	\begin{subfigure}{0.24\textwidth}
		\includegraphics[width=\textwidth]{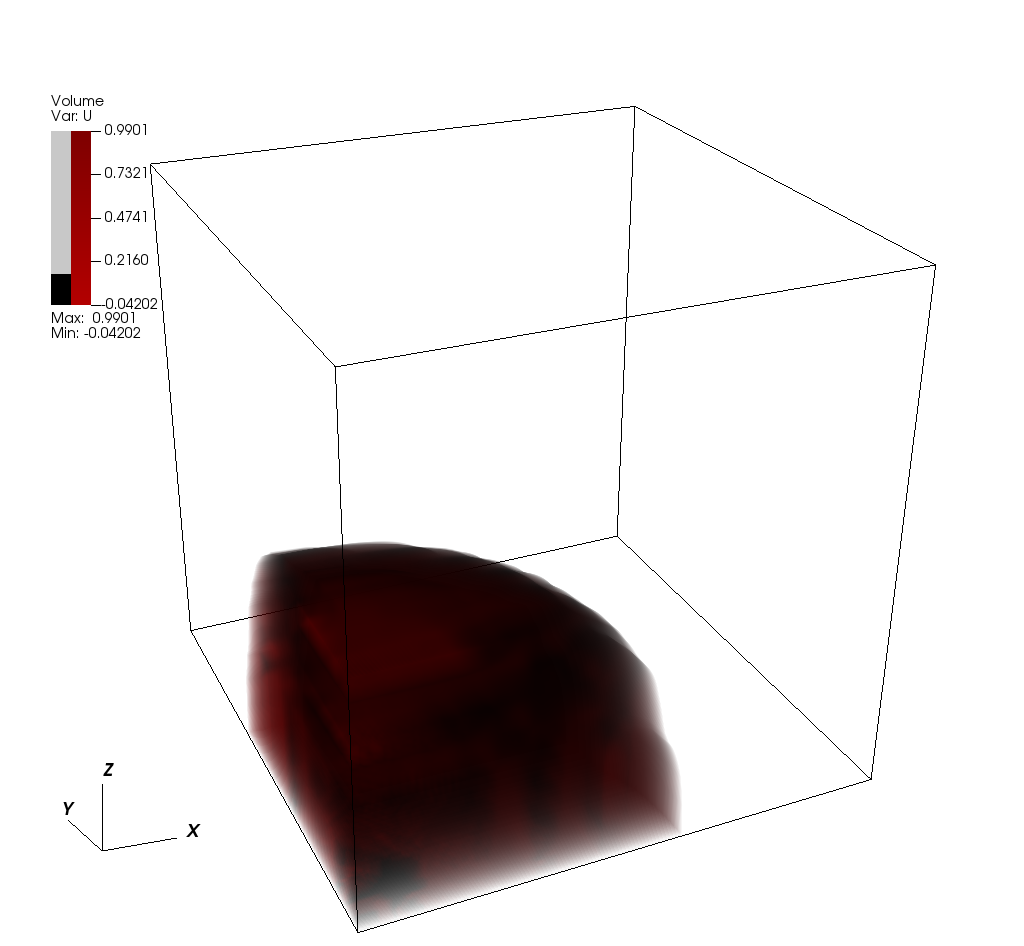}
		\caption{t = 15}
	\end{subfigure}
	\begin{subfigure}{0.24\textwidth}
		\includegraphics[width=\textwidth]{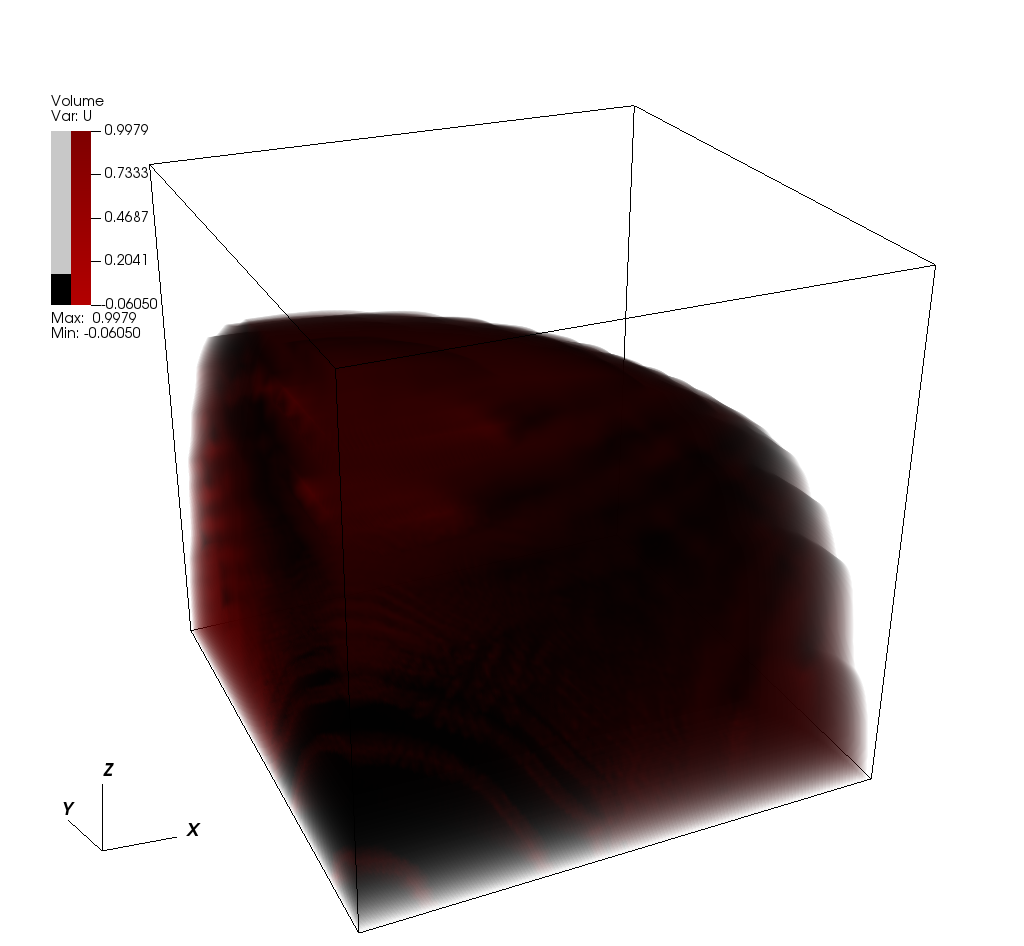}
		\caption{t = 25}
	\end{subfigure}
	\begin{subfigure}{0.24\textwidth}
		\includegraphics[width=\textwidth]{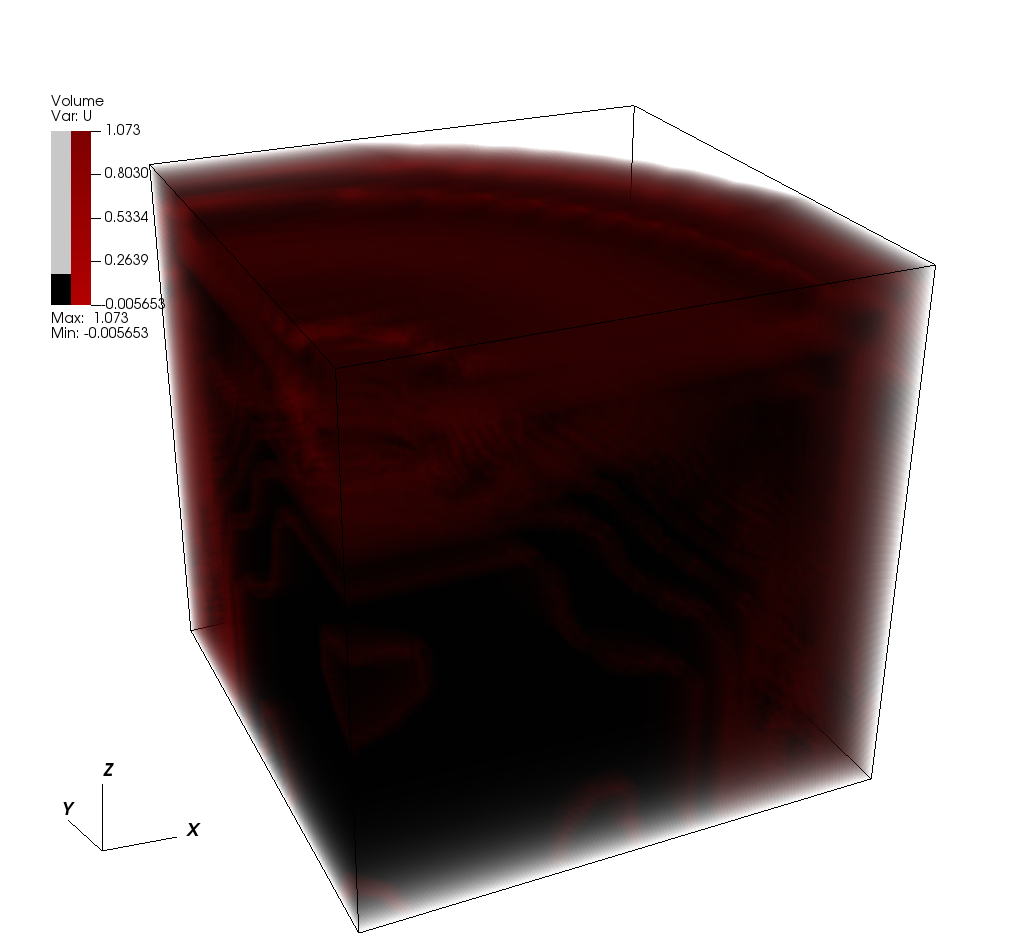}
		\caption{t = 35}
	\end{subfigure}
	\caption{
	Invasion of cancer cells $u$ for $\chi =1.0, \mu =1.0$ at different time instances $t=5, 15, 25$ and 35.
}
	\label{fig16}
\end{figure}

\section{Conclusions}
\label{sec_conclusions}
In this paper, we established theoretical proofs, numerical algorithms, 
implementations and numerical simulations for a cancer invasion model.
In our theoretical part, existence of global classical 
solutions in both two- and three-dimensional bounded domains were established.
In the proofs, we employed the fact that the second and third equation in \eqref{eq:system} at least regularize in time. For showing boundedness in $L^\infty$,
the comparison principle allowed us to conclude boundedness in small time intervals,
which then was iteratively applied to obtain the result also for larger times.
For the spatial derivatives, we secondly applied a testing procedure for deriving
estimates valid on small time intervals, again followed by an iteration procedure.
Parabolic regularity theory yielded global existence of the solutions.

The numerical stability of the system heavily depends on the haptotactic coefficient $\chi$. By fixing proliferation rate $\mu$ and varying the $\chi$ one can make either the diffusion or transport of the cells dominant. The later usually gives rise to spurious oscillations or numerical blow up in the system. 
In order to study such properties, \eqref{eq:system} was discretized using 
finite differences in time and Galerkin finite elements in space. A fixed-point 
scheme was designed to decouple the three equations, yielding a robust 
nonlinear procedure. These developments and their implementation
allowed us to study numerically variations 
in $\mu$ and $\chi$ in two and three spatial dimensions and to 
illustrate our theoretical results. 

As to future work, we notice that higher parameter variations resulting into 
convection-dominated regimes, require the design and implementation of
stabilization methods such as streamline upwind Petrov--Galerkin stabilizing
formulations or algebraic flux corrected transport.

\section*{Acknowledgments}
The work of Shahin Heydari has been supported through the grant No.396921 of the Charles University Grant Agency and Charles University Mobility Fund No. 2-068. She also would like to 
thank Institute of Applied Mathematics and the Leibniz University Hannover 
for their hospitality during the six months stay from November 2021 to April 2022.


\end{document}